\let\origsection=\section \def\section{\@ifstar{\origsection*}{\mysection}}
\def\mysection{\@startsection{section}{1}\z@{.7\linespacing\@plus\linespacing}{.5\linespacing}{\normalfont\scshape\centering\S}}
\renewcommand{\PrintDOI}[1]{\doi{#1}}
\numberwithin{equation}{section}
\numberwithin{figure}{section}
\def\rmlabel{\upshape({\itshape \roman*\,})}
\let\polishlcross=\l
\def\l{\ifmmode\ell\else\polishlcross\fi}
\def\paragraph#1{%
  \noindent\textbf{#1.}\enspace}
\let\emptyset=\varnothing
\let\setminus=\smallsetminus
\let\sm=\setminus
\def\moverlay{\mathpalette\mov@rlay}
\def\mov@rlay#1#2{\leavevmode\vtop{   \baselineskip\z@skip \lineskiplimit-\maxdimen
   \ialign{\hfil$\m@th#1##$\hfil\cr#2\crcr}}}
\newcommand{\charfusion}[3][\mathord]{
    #1{\ifx#1\mathop\vphantom{#2}\fi
        \mathpalette\mov@rlay{#2\cr#3}
      }
    \ifx#1\mathop\expandafter\displaylimits\fi}
\newcommand{\dcup}{\charfusion[\mathbin]{\cup}{\cdot}}
\DeclareFontFamily{U}  {MnSymbolC}{}
\DeclareSymbolFont{MnSyC}         {U}  {MnSymbolC}{m}{n}
\DeclareFontShape{U}{MnSymbolC}{m}{n}{
    <-6>  MnSymbolC5
   <6-7>  MnSymbolC6
   <7-8>  MnSymbolC7
   <8-9>  MnSymbolC8
   <9-10> MnSymbolC9
  <10-12> MnSymbolC10
  <12->   MnSymbolC12}{}
\DeclareMathSymbol{\powerset}{\mathord}{MnSyC}{180}
\let\epsilon=\varepsilon
\let\eps=\epsilon
\let\rho=\varrho
\let\theta=\vartheta
\let\kappa=\varkappa
\let\E=\EE
\def\NN{{\mathds N}}
\def\PP{{\mathds P}}
\def\Var{{\mathds V}ar}
\newcommand{\cA}{\mathcal{A}}
\newcommand{\cB}{\mathcal{B}}
\newcommand{\cC}{\mathcal{C}}
\newcommand{\cE}{\mathcal{E}}
\newcommand{\cF}{\mathcal{F}}
\newcommand{\cG}{\mathcal{G}}
\newcommand{\cH}{\mathcal{H}}
\newcommand{\cK}{\mathcal{K}}
\newcommand{\cP}{\mathcal{P}}
\newcommand{\cX}{\mathcal{X}}
\newcommand{\ccJ}{\mathscr{J}}
\newcommand{\tB}{\tilde{B}}
\theoremstyle{plain}
\newtheorem{thm}{Theorem}[section]
\newtheorem{theorem}[thm]{Theorem}
\newtheorem{prop}[thm]{Proposition}
\newtheorem{claim}[thm]{Claim}
\newtheorem{fact}[thm]{Fact}
\newtheorem{cor}[thm]{Corollary}
\newtheorem{lemma}[thm]{Lemma}
\theoremstyle{definition}
\newtheorem{rem}[thm]{Remark}
\newtheorem{dfn}[thm]{Definition}
\newcommand{\seq}[1]{\accentset{\rightharpoonup}{#1}}
\let\phi=\varphi
\begin{document}

\title{High powers of Hamiltonian cycles in randomly augmented graphs}

\author[S. Antoniuk]{Sylwia Antoniuk}
\address{Department of Discrete Mathematics, Adam Mickiewicz University, Pozna\'n, Poland}
\email{antoniuk@amu.edu.pl}

\author[A. Dudek]{Andrzej Dudek}
\address{Department of Mathematics, Western Michigan University, Kalamazoo, MI, USA}
\email{andrzej.dudek@wmich.edu}
\thanks{The second author was supported in part by Simons Foundation Grant \#522400.}

\author[Chr. Reiher]{Christian Reiher}
\address{Fachbereich Mathematik, Universit\"at Hamburg, Hamburg, Germany}
\email{christian.reiher@uni-hamburg.de}

\author[A. Ruci\'nski]{Andrzej Ruci\'nski}
\address{Department of Discrete Mathematics, Adam Mickiewicz University, Pozna\'n, Poland}
\email{\tt rucinski@amu.edu.pl}
\thanks{The fourth author was supported in part by the Polish NSC grant 2018/29/B/ST1/00426}

\author[M. Schacht]{Mathias Schacht}
\address{Fachbereich Mathematik, Universit\"at Hamburg, Hamburg, Germany}
\email{schacht@math.uni-hamburg.de}
\thanks{The fifth author was supported by the European Research Council (PEPCo 724903).}

\begin{abstract}
We investigate the existence of powers of Hamiltonian cycles in graphs with large minimum degree to which some additional edges have been added in a random manner. For all integers $k\geq1$,  $r\geq 0$, and $\ell\geq (r+1)r$, and for any $\alpha>\frac{k}{k+1}$ we show that adding $O(n^{2-2/\ell})$ random edges to an $n$-vertex graph $G$ with minimum degree at least $\alpha n$ yields, with probability close to one, the existence of the $(k\ell+r)$-th power of a Hamiltonian cycle. In particular, for $r=1$ and $\ell=2$ this implies that adding $O(n)$ random edges to such a graph $G$ already ensures the $(2k+1)$-st power of a Hamiltonian cycle (proved independently by Nenadov and Truji\'c). In this instance and for several other choices of $k$, $\ell$, and $r$ we can show that our result is asymptotically optimal.
\end{abstract}

\maketitle

\setcounter{footnote}{1}

\section{Introduction}
All graphs we consider are finite. For $m\in\NN$ the \emph{$m$-th power~$F^m$} of a~graph~$F$ is defined
as the graph on the same vertex set whose edges join distinct vertices at distance at most~$m$ in~$F$.
A Hamiltonian cycle in a graph $G$ is a cycle which passes through all vertices of~$G$.
The $m$-th power of an $s$-vertex path $P_s$ or cycle $C_s$ will be often called \emph{the $m$-path} or, respectively, \emph{the $m$-cycle}.

For integers $m\ge1$ and $n\geq m+2$, let us consider the set $\cC_n^m$
of all $n$-vertex graphs~$G$ that contain the $m$-th power $C^m_n$ of a Hamiltonian cycle $C_n$. Clearly,
$\cC_n^m$ is a monotone graph property, as powers of  Hamiltonian cycles cannot disappear
as a result of adding more edges (without new vertices).

The classical theorem of Dirac~\cite{D1952} asserts that every graph of order $n\ge 3$ and minimum degree $\delta(G)\ge n/2$ is Hamiltonian.
Furthermore, the resolution of the P\'osa--Seymour conjecture~\cites{E1964,S1974} (for large $n$), proved by
Koml\'os, Sark\"ozy, and Szemer\'edi~\cite{KSS1998}, yields the following extension: for each $k\ge2$ every $n$-vertex graph $G$ with $n\ge n_0$ and $\delta(G)\ge\tfrac{k}{k+1}n$ possesses property $\cC^{k}_n$.

Suppose that a graph $G$ has large minimum degree which, however, falls short from the above bound. Would adding a few random edges to $G$ help to create the desired power of a Hamiltonian cycle? Bohman, Frieze, and Martin \cite{BFM2003} were the first to study this question. They showed that, for any $\eps>0$, randomly sprinkling $O(n)$ additional edges onto a graph $G$ with $\delta(G)\ge \eps n$ forces,
with high probability, a Hamiltonian cycle~$C_n$. This result was extended in \cite{DRRS} to all $k\ge1$: \emph{for every graph $G$ with $n\ge n_0$ and $\delta(G)\ge (\tfrac k{k+1}+\eps)n$ adding $O(n)$ random edges yields, with high probability, the $(k+1)$-st power of a Hamiltonian cycle~$C^{k+1}_n$.} Note that the result in~\cite{KSS1998} guarantees only the existence of $C_n^k$ in $G$.
In this paper we substantially generalize the result in \cite{DRRS}.

We investigate the probability that a given $n$-vertex graph $G$ with  minimum degree  high enough to yield, by  the P\'osa--Seymour conjecture, $C_n^k$ in $G$,
augmented by a binomial random graph $G(n,p)$, $p=p(n)$, spans the
$m$-th power of a Hamiltonian cycle $C_n^m$. In other words we are interested in~$\PP(G\cup G(n,p)\in\cC_n^m)$.
To this end, we introduce the following definition.

 \begin{dfn}\label{dirac_thr}
  Given integers $k\ge 0$ and $m\ge 1$, we say that a sequence $d(n)$ is a \emph{ $(k,m)$-Dirac threshold} if

 (a) for every $\alpha>\tfrac{k}{k+1}$ there is $C>0$ such that for all $p(n)\ge C d(n)$
 \[
	\lim_{n\to\infty}\min_G\PP\big(G\cup G(n,p(n))\in\cC_n^m\big)=1\,,
\]
where the minimum is taken over all $n$-vertex graphs~$G$ with $\delta(G)\geq \alpha n$, while

(b) there exists $\alpha_0>\tfrac k{k+1}$ such that for all $\tfrac{k}{k+1}<\alpha<\alpha_0$ there is $c>0$ and  a sequence of $n$-vertex graphs $G_\alpha=G_\alpha(n)$ such that $\delta(G_\alpha)\geq \alpha n$ and for all $p\le cd(n)$
\[
	\lim_{n\to\infty}\PP\big(G_\alpha\cup G(n,p(n))\in\cC_n^m\big)=0\,.
\]
We denote  any function $d(n)$ satisfying both conditions (a) and (b) by $d_{k,m}(n)$.
\end{dfn}
In view of this definition, for any $d(n)$ satisfying  condition (a) alone we have $d_{k,m}(n)\le d(n)$, while  for any $d(n)$ satisfying condition (b) alone we have $d_{k,m}(n)\ge d(n)$.
Also $d_{k,m+1}(n)\ge d_{k,m}(n)$, as $C_n^m\subset C_n^{m+1}$.

A careful reader will notice that the above definition assumes that the dependence on $\alpha$ in the threshold $d_{k,m}$ appears only in a multiplicative constant. This is sufficient for our main result, however, there are instances of $k$ and $m$ for which this is not the case (see Section~\ref{sec:con_rem}).

The result in~\cite{BFM2003} can be now restated as $d_{0,1} = n^{-1}$.
Our main result establishes an upper bound on the Dirac threshold  $d_{k,m}(n)$ for infinitely many values of $m$ for each $k\in\NN$.

\begin{theorem}\label{thm:main}
For all integers $k\geq 1$, $r\geq 0$, $\ell\geq r(r+1)$, and for $m=k\ell+r$
\[
d_{k,m}(n)\leq n^{-2/\l}.
\]

\end{theorem}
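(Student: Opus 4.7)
The plan is to use the absorbing method, which is standard in this line of work. Three ingredients enter: the Koml\'os--S\'ark\"ozy--Szemer\'edi (KSS) theorem, Szemer\'edi regularity together with the blow-up lemma, and a small-subgraph analysis of $G(n,p)$ at density $p = C n^{-2/\ell}$.

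Since $\alpha > k/(k+1)$, applying Szemer\'edi's regularity lemma to $G$ yields a reduced graph $R$ whose minimum degree ratio still exceeds $k/(k+1)$, so by KSS (or already by Hajnal--Szemer\'edi on $R$ and a cyclic arrangement of clique cosets) $R$ contains the $k$-th power of a Hamilton cycle. This supplies a cyclic ordering of clusters along which the target $m$-th power of a Hamilton cycle in $G\cup G(n,p)$ will be placed: vertices of consecutive clusters will be assigned to consecutive ranges of positions along the cycle.

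To amplify this $k$-power skeleton into an $m=k\ell+r$ power, partition the positions along the target cyclic ordering into consecutive chunks of length $\ell$. The $m$-power edges of short cycle-distance arise ``for free'' from the regular $G$-density between clusters via the blow-up lemma, while the $m$-power edges of cycle-distance close to $m$ must be supplied by $G(n,p)$. The relevant local structure is an $\ell$-vertex gadget---roughly a $K_\ell$-copy of the random graph with a prescribed placement inside one chunk---which, jointly with the $G$-edges, produces all missing edges in that chunk. A first-moment computation shows that at $p = C n^{-2/\ell}$ the random graph contains a linear-in-$n$ number of such gadgets, enough to drive a greedy or absorbing embedding. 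The hypothesis $\ell \geq r(r+1)$ is the combinatorial feasibility condition for the gadget: it is precisely what permits an $\ell$-vertex structure to realize the residual ``$+r$'' part of the $m$-power pattern.

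With the chunk lemma in hand, the remaining architecture follows the standard absorbing method: build a sublinear absorbing $m$-path $A$ from modified chunks carrying extra random edges that give each vertex outside $V(A)$ two distinct ways to be inserted into $A$; cover the bulk of $V\setminus V(A)$ by a long $m$-path using the chunk lemma on the remaining clusters; and close everything up through a small reservoir using the absorbing property of $A$. The hardest part is the chunk lemma itself: one must identify the right $\ell$-vertex gadget, verify its abundance in $G(n,p)$ at the threshold $n^{-2/\ell}$, and couple the regular cluster partition of $G$ with the random placement of gadgets so that both the bulk $m$-path and the absorber can be embedded simultaneously.
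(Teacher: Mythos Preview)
Your high-level architecture---absorption method with a reservoir, an absorbing path, and an almost-cover---is exactly what the paper does, and the idea that the $k$-power skeleton comes from $G$ while the random graph supplies the remaining edges via gadgets placed in $\ell$-chunks is also right. But two load-bearing points are misidentified, and a third is too weak.

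First, the gadget is not ``roughly a $K_\ell$-copy \ldots\ inside one chunk.'' The missing edges of $P^m$ relative to the $\ell$-blow-up of $P^k$ are of two kinds: the clique $K_\ell$ inside each chunk \emph{and} an $r$-bridge (with $\binom{r+1}{2}$ edges) between every pair of chunks that lie $k{+}1$ apart in the blow-up order. Together these form what the paper calls a braid $B(\ell,r,t)$; the full gadget is $(k{+}1)$ vertex-disjoint braids. The bridges, not the cliques, are what realise the ``$+r$'' part of the power.

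Second, and relatedly, the hypothesis $\ell\ge r(r+1)$ is not a combinatorial feasibility condition for fitting ``$+r$'' into $\ell$ vertices---the braid construction works for any $r\le\ell$. It is a \emph{density} condition: it says exactly that $K_\ell$ remains the densest subgraph of the braid, i.e.\ that the bridges do not push $m_B$ above $\ell/2$. This is what makes $\Phi_B\ge Cn$ at $p=Cn^{-2/\ell}$ and hence what pins the threshold at $n^{-2/\ell}$. Without this calculation you cannot justify the exponent.

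Third, a first-moment computation does not suffice. Both the reservoir step and the connecting step require a union bound over all subsets $S\subseteq R$ (or all avoided sets $Z$), which is $2^{\Theta(n)}$ events. You need Janson's inequality to get failure probability $\exp\{-\Omega(n)\}$ for each fixed such set; the linear-in-$n$ exponent comes precisely from $\Phi_B\ge Cn$, which again rests on $\ell\ge r(r+1)$.

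As a minor point, the paper does not use the blow-up lemma anywhere; the cover is built via the Counting Lemma and an Erd\H{o}s-type hypergraph embedding to find many $\ell$-blow-ups of $k$-paths, then Janson to complete each with a braid.
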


Notice that the largest $r=r(\ell)$ for which $\ell\geq r(r+1)$ is $r = \lfloor \frac{\sqrt{4\ell+1} -1}{2} \rfloor$. Thus, Theorem~\ref{thm:main} implies that $d_{k,m}(n)\leq n^{-2/\l}$ for any $m\le k\ell  + \frac{\sqrt{4\ell+1} -1}{2}$. Furthermore, for many choices of $k$ and $m$  
we can provide a matching lower bound on $d_{k,m}(n)$, thus,  determining the $(k,m)$-Dirac threshold altogether.

\begin{theorem}\label{thm:main_bounds}
For all positive integers $k$, $\ell$, and $m$ satisfying the inequalities
\[
(k+1)(\ell-1) \le m \le k\ell  + \frac{\sqrt{4\ell+1} -1}{2},
\]
 we have
 \[
 d_{k,m}(n)=n^{-2/\ell}.
 \]
\end{theorem}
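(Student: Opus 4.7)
The upper bound is a direct corollary of Theorem~\ref{thm:main}. For $m\ge k\ell$ one may write $m=k\ell+r$ with $0\le r\le(\sqrt{4\ell+1}-1)/2$, hence $\ell\ge r(r+1)$, and Theorem~\ref{thm:main} applies. For $m<k\ell$ (which, under the hypothesis $m\ge(k+1)(\ell-1)$, can occur only when $\ell\le k$) the monotonicity $d_{k,m}(n)\le d_{k,k\ell}(n)$ noted in the introduction combined with Theorem~\ref{thm:main} for $r=0$ yields the same bound.

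For the lower bound I would exhibit, for each $\alpha$ slightly above $k/(k+1)$, an $n$-vertex graph $G_\alpha$ with $\delta(G_\alpha)\ge\alpha n$ together with a constant $c>0$ such that $\PP(G_\alpha\cup G(n,p)\in\cC_n^m)\to 0$ for all $p\le cn^{-2/\ell}$. My candidate is a blow-up of $K_{k+1}$ on parts $V_1,\dots,V_{k+1}$ of size $\approx n/(k+1)$, augmented by a small boost inside the parts that raises the minimum degree to at least $\alpha n$ while keeping each $G_\alpha[V_j]$ essentially $K_\ell$-free (for $\ell\ge 3$ a sparse $K_\ell$-free graph such as a Tur\'an-type graph inside each part; for $\ell=2$ a small dominating clique inside each part, which provides the degree boost while leaving most intra-part pairs non-adjacent). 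The argument then relies on two observations: first, the hypothesis $m\ge(k+1)(\ell-1)$ together with the pigeonhole principle forces every $K_{m+1}$ in $G_\alpha\cup G(n,p)$ to contain $\ell$ vertices inside a single $V_j$, hence a copy of $K_\ell$ inside that part; second, a $K_\ell$ whose vertices span at most $m$ cyclic positions can lie in at most $m-\ell+2$ consecutive cyclic windows of length $m+1$, and consequently a Hamiltonian $m$-cycle forces the existence of $\Omega(n)$ distinct copies of $K_\ell$ inside the parts.

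By the design of the boost, essentially all of these $K_\ell$'s have to be supplied by $G(n,p)$. A first-moment computation based on $\binom{n}{\ell}p^{\binom{\ell}{2}}=\Theta(c^{\binom{\ell}{2}}n)$ at the threshold $p=cn^{-2/\ell}$, combined with a Chebyshev-type concentration bound (the number of copies of $K_\ell$ in $G(n,p)$ has variance of smaller order than the square of its mean $\Theta(n)$, so the count is sharply concentrated), will show that with probability tending to one the number of $K_\ell$-copies inside the parts of $G_\alpha\cup G(n,p)$ stays below $n/(m-\ell+2)$ as soon as $c$ is chosen sufficiently small, contradicting the cycle's requirement. The main technical hurdle is the design of the boost: it has to raise $\delta(G_\alpha)$ above $kn/(k+1)$ without producing too many \emph{cheap} copies of $K_\ell$, that is, $K_\ell$'s completed by only a few random edges on top of many $G_\alpha$-edges, since such cheap copies could in principle exceed $n/(m-\ell+2)$ and render the first-moment count useless. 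Balancing this trade-off, particularly for $\ell\ge 3$, and verifying that the boost and $G(n,p)$ together produce only $o(n)$ additional $K_\ell$-copies inside any single part, is the delicate step that needs to be carried out.
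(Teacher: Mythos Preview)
Your upper bound argument is correct and matches the paper's.

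Your lower bound proposal, however, has a genuine gap, and the difficulty you flag at the end is not just delicate --- your suggested construction actually fails. If $G_\alpha[V_j]$ is a complete $(\ell-1)$-partite Tur\'an graph on $\Theta(n)$ vertices, then for $\ell\ge 3$ there are $\Theta(n^\ell)$ potential copies of $K_\ell$ that need only \emph{one} random edge (two vertices in one Tur\'an class, one in each of $\ell-2$ others). At $p=cn^{-2/\ell}$ the expected number of such ``cheap'' cliques is $\Theta(n^{\ell-2/\ell})\gg n$, and concentration will not save you; there really are $\omega(n)$ copies of $K_\ell$ inside each part a.a.s., so your counting argument collapses. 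No choice of a dense $K_\ell$-free boost avoids this.

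The paper sidesteps the whole issue with a different boost together with a simple discarding trick. Inside each $V_i$ one takes a small set $W_i$ of size $\lceil\eps n\rceil$ and adds the complete bipartite graph between $W_i$ and $V_i\setminus W_i$ (nothing inside $W_i$, nothing inside $V_i\setminus W_i$). This already gives $\delta(G_\alpha)\ge(\tfrac{k}{k+1}+\eps)n$. Now a copy of $C_n^m$ contains $\lfloor n/(m+1)\rfloor$ \emph{vertex-disjoint} copies of $K_{m+1}$, and at most $|W_1\cup\cdots\cup W_{k+1}|\le(k+1)(\eps n+1)$ of them meet $W$. For each of the remaining $K_{m+1}$'s, the pigeonhole step you describe (using $\lceil(m+1)/(k+1)\rceil=\ell$) forces a $K_\ell$ entirely inside some $V_i\setminus W_i$; but $V_i\setminus W_i$ is independent in $G_\alpha$, so this $K_\ell$ lies wholly in $G(n,p)$. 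Hence one needs at least $\lfloor n/(m+1)\rfloor-(k+1)(\eps n+1)\ge\gamma n$ copies of $K_\ell$ in $G(n,p)$, which a.a.s.\ fails for $p\le c\,n^{-2/\ell}$ with $c$ small (Fact~\ref{cheb}). The crucial point you are missing is that one does not need to control \emph{all} intra-part $K_\ell$'s; it suffices to make the boost touch few vertices and then throw away the bounded number of $K_{m+1}$'s that meet those vertices.
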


In particular, for each $\ell=2,3,4,5,6$ and infinitely many $k$, we list all values of $m$ for which $d_{k,m}(n)$ has been determined in Theorem \ref{thm:main_bounds}.

\begin{cor}\label{cor:main_bounds}
The following holds:
\begin{enumerate}[label=\rmlabel]
\item For $k\ge 1$ and\; $k+1\le m\le 2k+1$ we have $d_{k,m}(n)=n^{-1}$.
\item For $k\ge 1$ and\; $2k+2\le m\le 3k+1$ we have  $d_{k,m}(n)=n^{-2/3}$.
\item For $k\ge 2$ and\; $3k+3\le m\le 4k+1$ we have  $d_{k,m}(n)=n^{-1/2}$.
\item For $k\ge 3$ and\; $4k+4\le m\le 5k+1$ we have   $d_{k,m}(n)=n^{-2/5}$.
\item For $k\ge 3$ and\; $5k+5\le m\le 6k+2$ we have   $d_{k,m}(n)=n^{-1/3}$.
\end{enumerate}
\end{cor}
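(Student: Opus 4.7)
The plan is to obtain Corollary~\ref{cor:main_bounds} as a direct specialization of Theorem~\ref{thm:main_bounds}: for each $\ell\in\{2,3,4,5,6\}$ I would plug $\ell$ into the admissible range
\[
(k+1)(\ell-1)\le m\le k\ell+\tfrac{\sqrt{4\l+1}-1}{2}
\]
and simply read off all integer values of $m$ satisfying it. Since $m$ is an integer and $k\ell$ is an integer, the upper bound may be replaced by $k\ell + r(\ell)$, where $r(\ell)=\lfloor(\sqrt{4\ell+1}-1)/2\rfloor$ is (as noted after Theorem~\ref{thm:main}) the largest $r$ with $r(r+1)\le\ell$. A one-line computation gives $r(2)=r(3)=r(4)=r(5)=1$ and $r(6)=2$, yielding upper bounds $2k+1$, $3k+1$, $4k+1$, $5k+1$, and $6k+2$ in cases \rmlabel{} through \RMlabel{}\,($v$) respectively, matching the statement.

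The only remaining step is to record, in each case, for which $k$ the interval $[(k+1)(\ell-1),\,k\ell+r(\ell)]$ is nonempty. Solving the inequality $(k+1)(\ell-1)\le k\ell+r(\ell)$ in $k$ gives trivially $k\ge 1$ for $\ell\in\{2,3\}$, $k\ge 2$ for $\ell=4$, and $k\ge 3$ for $\ell\in\{5,6\}$, which are precisely the lower bounds on $k$ listed in (i)--(v). No real obstacle is to be expected here: the entire argument is a finite, routine case distinction once Theorem~\ref{thm:main_bounds} is available, and the sole ingredient beyond substitution is the elementary evaluation of $\lfloor(\sqrt{4\ell+1}-1)/2\rfloor$ for the six small values of $\ell$.
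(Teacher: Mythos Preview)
Your proposal is correct and is exactly the approach the paper intends: the corollary is stated immediately after Theorem~\ref{thm:main_bounds} as the specialization to $\ell\in\{2,3,4,5,6\}$, and your computation of $r(\ell)$ and of the constraint on $k$ making the interval nonempty matches the stated ranges in (i)--(v). (One tiny slip: you wrote ``six small values of $\ell$'' where only five are used.)
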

\noindent
Part (i) of Corollary \ref{cor:main_bounds} was independently proved by Nenadov and Truji\'c in \cite{NT}.

We  also show that the threshold from the last case of Corollary~\ref{cor:main_bounds} can be extended to $k\in \{1,2\}$.

\begin{theorem}\label{thm:6k2}
For every  integer $k\ge 1$,
\[
d_{k,6k+2}(n)=n^{-1/3}.
\]
\end{theorem}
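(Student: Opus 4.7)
The statement $d_{k,6k+2}(n)=n^{-1/3}$ consists of an upper bound (condition (a) of Definition~\ref{dirac_thr}) and a matching lower bound (condition (b)).

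The upper bound follows directly from Theorem~\ref{thm:main} applied with $\ell=6$ and $r=2$: the hypothesis $\ell\ge r(r+1)$ reads $6\ge 6$ and $k\ell+r=6k+2=m$, so $d_{k,6k+2}(n)\le n^{-2/6}=n^{-1/3}$ for every $k\ge 1$.

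For the lower bound, the case $k\ge 3$ is already contained in Corollary~\ref{cor:main_bounds}(v), whose range $5k+5\le m\le 6k+2$ contains $m=6k+2$ precisely when $k\ge 3$. The genuinely new content of Theorem~\ref{thm:6k2} lies therefore in the two cases $k=1$ (i.e.\ $m=8$) and $k=2$ (i.e.\ $m=14$). In each of these I would construct, for every $\alpha$ slightly above $k/(k+1)$, an $n$-vertex graph $G_\alpha$ with $\delta(G_\alpha)\ge\alpha n$ for which $G_\alpha\cup G(n,p)$ fails to contain $C_n^{6k+2}$ with high probability whenever $p\le cn^{-1/3}$ with $c>0$ small. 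The construction would adapt the approach underpinning Corollary~\ref{cor:main_bounds}(v), namely a complete $(k+1)$-partite backbone whose within-part ``holes'' the random graph $G(n,p)$ cannot fill at the relevant threshold. The new wrinkle for $k\in\{1,2\}$ is that the balanced $(k+1)$-partite graph has minimum degree exactly $kn/(k+1)$ and therefore fails to satisfy $\delta\ge\alpha n$ whenever $\alpha>k/(k+1)$ strictly. To remedy this, the backbone must be augmented inside each part by a sparse auxiliary graph $F_i$ on $V_i$ supplying the missing $(\alpha-k/(k+1))n$ degree, and $F_i$ must be chosen carefully so that $F_i\cup G(n,cn^{-1/3})[V_i]$ still fails whp to contain the specific within-part subgraph needed to complete a copy of $K_{6k+3}$ and hence of $C_n^{6k+2}$. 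A first-moment computation tracking copies of the relevant within-part configuration in $F_i\cup G(n,p)[V_i]$ then yields the desired vanishing probability.

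The main obstacle is the $k=1$ case: the bipartite backbone $K_{V_1,V_2}$ has $\delta=n/2$ exactly, so any strict boost $\alpha=\tfrac12+\epsilon$ forces $F_i$ to have linear minimum degree, and balancing the in-part density of $F_i$ against the preservation of the $n^{-1/3}$ lower-bound threshold is delicate; it most likely calls for a pseudorandom or algebraic choice of $F_i$ whose densities on small vertex subsets can be simultaneously controlled, so that the sparse random graph cannot complete any $F_i$-configuration into the forbidden within-part subgraph.
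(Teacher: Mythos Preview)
Your reduction of the upper bound to Theorem~\ref{thm:main} with $\ell=6$, $r=2$, and of the lower bound for $k\ge 3$ to Corollary~\ref{cor:main_bounds}(v), is correct and matches the paper.

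For $k\in\{1,2\}$, however, your proposal misidentifies where the difficulty lies. The construction is \emph{not} the obstacle: the paper simply reuses the graph $G_\alpha$ from the proof of Theorem~\ref{thm:main_bounds}, namely the complete $(k+1)$-partite graph on parts $V_1,\dots,V_{k+1}$ together with the complete bipartite graphs between $W_i$ and $V_i\setminus W_i$, where $|W_i|=\lceil\eps n\rceil$. This already has $\delta(G_\alpha)\ge(\tfrac{k}{k+1}+\eps)n$; no pseudorandom or algebraic gadget is needed.

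The real work is in the argument, and your sketch (``a first-moment computation tracking copies of the relevant within-part configuration needed to complete a copy of $K_{6k+3}$'') does not go through. For $k=1$, say, a single $K_9$ in $H$ disjoint from $W$ forces only a $K_5$ inside some $V_j$, and at $p=cn^{-1/3}$ the random graph $G(n,p)$ contains $\Theta(n^{5/3})\gg n$ copies of $K_5$; so neither a single-copy nor a linear-count first-moment argument on $K_5$ can yield the $n^{-1/3}$ threshold. The paper instead exploits the \emph{length} of a forced $m$-path. After deleting $W$ and one vertex from each copy of $K_6$ in $G(n,p)$ (there are $o(n)$ such copies by Fact~\ref{cheb}), the assumed $C_n^m$ leaves behind an $m$-path $P$ on $\Omega(1/\eps)$ vertices in a $K_6$-free host. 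One then shows that $P$ restricted to some part $V_j$ is a subgraph of $G(n,p)$ of edge density exceeding the threshold: for $k=2$ this restriction contains a spanning $4$-path on $q\ge 7$ vertices, and for $k=1$ a spanning $3$-path plus at least $\tfrac12\lfloor|V(P)|/9\rfloor$ extra edges (coming from $K_5$'s inside consecutive $K_9$-blocks of $P$). In either case the resulting subgraph has more than $3(q-1)$ edges on $q$ vertices, and Markov's inequality rules it out when $p=O(n^{-1/3})$. This accumulation of density along a long segment is the idea missing from your outline.
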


Observe that in view of Corollary~\ref{cor:main_bounds} the first open case is $k=1$ and $m=5$. We will comment on this in Section~\ref{sec:con_rem}.

\section{Random graphs}\label{rg}

There are two basic models of random graphs, the binomial one, $G(n,p)$, and the uniform one, $G(n,M)$, which are asymptotically equivalent under some mild assumptions whenever $M\sim\binom n2p$ (see Section~1.4 in \cite{JLR}). In this paper we chose to state and prove our results in the binomial model, yet they can be translated to the uniform model if there is such a need. For instance, Theorem~\ref{thm:main} asserts that under the assumptions given there it suffices to add $O(n^{2-2/\ell})$ random edges to ensure a copy of $C_n^m$.

In this section we collect  some results on $G(n,p)$ which we use later.
For a graph property~$\cP$, we say that $\cP$ holds \emph{asymptotically almost surely} (\emph{a.a.s.}) if
$\PP(G(n,p)\in\cP)\to1$ as $n\to\infty$.
In our proofs we are going to use the following consequence of Chebyshev's inequality (see Remark~3.7. in \cite{JLR}).

\begin{fact}\label{cheb} For every $\ell\ge3$ and  $\gamma>0$, there is a constant $c=c(\ell,\gamma)$ such that if $p\le cn^{-2/\ell}$, then a.a.s.\ there are fewer than $\gamma n$ copies of the clique $K_\ell$ in $G(n,p)$.
\end{fact}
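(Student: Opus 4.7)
The plan is to apply the second moment method via Chebyshev's inequality to the random variable $X$ counting copies of $K_\ell$ in $G(n,p)$.

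First, I would bound the expectation directly. Since
\[
\mu := \EE X = \binom{n}{\ell} p^{\binom{\ell}{2}} \leq \frac{n^\ell}{\ell!}\, c^{\binom{\ell}{2}} n^{-(\ell-1)} = \frac{c^{\binom{\ell}{2}}}{\ell!}\, n,
\]
choosing $c=c(\ell,\gamma)$ small enough so that $c^{\binom{\ell}{2}} \leq \gamma\,\ell!/2$ guarantees $\mu \leq (\gamma/2) n$. Thus the event $\{X \geq \gamma n\}$ is contained in $\{|X-\mu| \geq (\gamma/2) n\}$, and it suffices to show that $\Var X = o(n^2)$, so a pure Markov bound will not do and the second moment is genuinely needed.

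Next, I would perform the standard variance calculation for subgraph counts. Writing $X = \sum_S \mathds{1}_S$ over $\ell$-subsets $S \subseteq [n]$, the indicators $\mathds{1}_S$ and $\mathds{1}_T$ are uncorrelated when $|S \cap T| \leq 1$; for $|S\cap T| = i$ with $2 \leq i \leq \ell$, the contribution to $\Var X$ is at most $O\bigl(n^{2\ell-i}\,p^{2\binom{\ell}{2}-\binom{i}{2}}\bigr)$. Substituting $p \leq c n^{-2/\ell}$, the exponent of $n$ simplifies to $2 - i(\ell-i+1)/\ell$. Over $i \in \{2,\dots,\ell\}$ this is maximized at the endpoint $i=\ell$ (using $\ell < 2(\ell-1)$ for $\ell \geq 3$), yielding $\Var X = O_\ell(n)$.

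Finally, Chebyshev's inequality delivers
\[
\PP\bigl(X \geq \gamma n\bigr) \leq \PP\bigl(|X-\mu| \geq (\gamma/2)n\bigr) \leq \frac{4\,\Var X}{\gamma^2 n^2} = O_{\ell,\gamma}(n^{-1}) \longrightarrow 0.
\]
The argument is essentially routine, and the only mildly delicate step is verifying that the extremal index in the variance sum is $i = \ell$ (so that $\Var X$ grows only linearly), which reduces to the elementary inequality $\ell < 2(\ell-1)$ valid for $\ell\geq 3$.
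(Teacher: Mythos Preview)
Your proposal is correct and follows the same route as the paper, which merely cites the result as ``a consequence of Chebyshev's inequality (see Remark~3.7 in~\cite{JLR})'' without spelling out the details; your computation of the expectation, the variance decomposition by intersection size, and the identification of $i=\ell$ as the dominant term are exactly the standard second-moment argument that reference points to.
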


We will also apply two versions of Janson's inequality.
The most general one is given in  \cite[Theorem~2.14]{JLR}.
For the proof of Theorem~\ref{thm:main} we will need a strengthening of Theorem~3.9 in \cite{JLR} (the R-H-S inequality only), which is a version of Theorem~2.14 in the context of subgraphs of random graphs.
For a graph $G$ with at least one edge, set
\[
\Phi_G=\min_{F\subseteq G, e_F>0} \Psi_F,
\]
where $\Psi_F=n^{v_F}p^{e_F}$, and $v_F$, $e_F$ denote, respectively, the number of vertices and the number of edges of graph $F$.

\begin{theorem}\label{Janson2}
Let $\tau>0$ and $G$ be a graph with at least one edge and let $\cG$ be a family of copies of $G$ in $K_n$ with $|\cG|\ge\tau n^{v_G}$. Further, let $X$ be the number of copies of $G$ belonging to $\cG$ which are present in $G(n,p)$. Then,
\[
\PP(X \le\tau\Psi_G/2)\le\exp\{-\tau^2 4^{-e_G}\Phi_G/8\}.
\]
\end{theorem}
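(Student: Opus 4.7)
The plan is to deduce Theorem~\ref{Janson2} from the general Janson inequality (Theorem~2.14 of~\cite{JLR}) applied to the monotone increasing events $B_H=\{H\subseteq G(n,p)\}$ indexed by $H\in\cG$, so that $X=\sum_{H\in\cG}\mathbf{1}_{B_H}$. Since all $B_H$ are determined by the mutually independent edge-indicators of $K_n$, Janson's lower-tail estimate gives, for every $0\le t\le\mu$ with $\mu=\EE X$,
\[
\PP(X\le\mu-t)\le\exp\!\bigl(-t^2/(2(\mu+\bar\Delta))\bigr),
\]
where $\bar\Delta$ is the sum of $\PP(B_H\cap B_{H'})$ over ordered pairs $(H,H')\in\cG^2$ with $H\ne H'$ and $E(H)\cap E(H')\ne\emptyset$. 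The hypothesis $|\cG|\ge\tau n^{v_G}$ gives $\mu=|\cG|p^{e_G}\ge\tau\Psi_G$; using this with $t=\mu/2$ bounds $\PP\bigl(X\le\tau\Psi_G/2\bigr)$ by $\exp(-\mu^2/(8(\mu+\bar\Delta)))$.

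The main computation is a bound on $\bar\Delta$, following the strategy used in the proof of Theorem~3.9 of~\cite{JLR}. For any two distinct $H,H'\in\cG$ sharing at least one edge, the intersection $H\cap H'$ is isomorphic to some $F\subseteq G$ with $e_F\ge 1$, and $\PP(B_H\cap B_{H'})=p^{2e_G-e_F}$. Grouping pairs according to the isomorphism type of~$F$, the number of ordered pairs with intersection isomorphic to~$F$ is at most a $G$-dependent constant times $n^{2v_G-v_F}$, and summing over the at most $2^{e_G}$ non-empty edge-subgraphs of~$G$ yields, after careful bookkeeping, a bound of the form
\[
\bar\Delta\le c\cdot 4^{e_G}\,\Psi_G^2/\Phi_G
\]
for a small absolute constant~$c$. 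Since $|\cG|\le n^{v_G}$ gives $\Psi_G\le\mu/\tau$, this becomes $\bar\Delta\le c\cdot 4^{e_G}\mu^2/(\tau^2\Phi_G)$.

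To finish, I substitute into Janson. The easy case $\bar\Delta<\mu$ follows from a direct Chernoff-type estimate on $X$; in the remaining case $\bar\Delta\ge\mu$ one has $\mu+\bar\Delta\le 2\bar\Delta$, and hence
\[
\PP(X\le\tau\Psi_G/2)\le\exp\!\bigl(-\mu^2/(16\bar\Delta)\bigr)\le\exp\!\bigl(-\tau^2\Phi_G/(16c\cdot 4^{e_G})\bigr),
\]
which is at least as strong as the stated $\exp(-\tau^2 4^{-e_G}\Phi_G/8)$ provided $c\le 1/2$.

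I expect the main technical hurdle to be pinning down the combinatorial prefactors in the bound on~$\bar\Delta$ tightly enough to achieve the precise form $4^{-e_G}/8$ in the exponent; concretely, this requires a careful count of the copies of each subgraph $F$ inside $G$ together with the number of extensions of a fixed copy of $F$ in $K_n$ to a copy of~$G$. Once that bookkeeping is settled, the remainder is a mechanical substitution into Janson's inequality, and the quantitative dependence on~$\tau$ is inherited cleanly because the only place $\tau$ enters is through the lower bound $|\cG|\ge\tau n^{v_G}$, which propagates via $\Psi_G\le\mu/\tau$ into the denominator of the bound on~$\bar\Delta$.
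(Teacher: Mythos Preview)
Your approach is the same as the paper's: apply Janson (Theorem~2.14 of~\cite{JLR}) to the indicators of the copies in~$\cG$, use $\mu\ge\tau\Psi_G$, and bound~$\bar\Delta$ by grouping pairs according to their intersection~$F\subseteq G$.

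The one place where your write-up drifts from the paper and creates the very ``technical hurdle'' you anticipate is the case split on $\bar\Delta\lessgtr\mu$. In the formulation of Theorem~2.14 in~\cite{JLR}, $\bar\Delta$ already includes the diagonal terms, so the inequality reads $\PP(X\le\lambda/2)\le\exp(-\lambda^2/(8\bar\Delta))$ with no $\mu+\bar\Delta$ in the denominator and hence no case split. The paper then bounds this diagonal-including $\bar\Delta$ directly: at most $2^{e_G}$ choices of an edge-subgraph $F$ of $G$, at most $2^{e_G}$ placements of $F$ inside a fixed copy of $G$, at most $n^{v_G}$ choices of the first copy and $n^{v_G-v_F}$ extensions to the second, giving $\bar\Delta\le 4^{e_G}\Psi_G^2/\Phi_G$, i.e.\ $c=1$ in your notation. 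Plugging in $\lambda\ge\tau\Psi_G$ yields exactly $\exp(-\tau^2 4^{-e_G}\Phi_G/8)$. Your route---splitting into cases, losing a factor of~$2$, and then hoping to recover it via $c\le 1/2$---does not close, since the natural counting only gives $c=1$; simply drop the split and bound $\mu+\bar\Delta$ (equivalently, the JLR $\bar\Delta$) in one stroke.
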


\proof
We follow the lines of the proof of Theorem~3.9 in \cite{JLR}. The main difference is that we rely on Theorem~2.14 from~\cite{JLR}, and not on Theorem~2.18(ii).
Moreover, instead of defining the indicators $I_{G'}$ for all copies of $G$ in $K_n$, we define them for $G'\in\cG$ only.

We have
\[
\lambda:=\E X=|\cG|p^{e_G}\ge\tau n^{v_G}p^{e_G}=\tau\Psi_G.
\]
By Theorem~2.14 from \cite{JLR},
\begin{equation}\label{2.14}
\PP(X\le\tau\Psi_G/2)\le\PP(X\le\lambda/2)\le\exp\{-\lambda^2/(8\bar\Delta)\},
\end{equation}
where $\bar\Delta$ is defined in Theorem~2.14 \cite{JLR}. In our case,
\[
\bar\Delta=\sum_H\sum_{G'}\sum_{G''}p^{2e_{G}-e_H},
\]
where $\sum_H$ is taken over all subgraphs $H$ of $G$ with $e_H>0$, $\sum_{G'}$ -- over all copies $G'$ of $G$ in $\cG$, while $\sum_{G''}$ -- over all copies $G''$ of $G$ in $\cG$ with $G'\cap G''=H$. This  can be upper bounded, estimating crudely  the number of copies of $H$ in $G$ by $2^{e_G}$, as follows:
\[
\sum_H n^{v_G}2^{e_G} n^{v_{G}-v_H}p^{2e_{G}-e_H}\le\sum_H 2^{e_G}\frac{\Psi_G^2}{\Psi_H}\le 4^{e_G}\frac{\Psi_G^2}{\Phi_G}.
\]
Plugging this bound into (\ref{2.14}) completes the proof.
\qed

\section{Lower bounds}
Here we deduce Theorems~\ref{thm:main_bounds} and \ref{thm:6k2} from Theorem~\ref{thm:main} by complementing it with lower bounds on the corresponding $(k,m)$-Dirac thresholds.

\begin{proof}[Proof of Theorem~\ref{thm:main_bounds}] In view of Theorem~\ref{thm:main}, it suffices to show that if $(k+1)(\ell-1) \le m$, then $d_{k,m}(n)\ge n^{-2/\ell}$.

By monotonicity of $d_{k,m}(n)$ as a function of $m$, we may assume that $m=(k+1)(\ell-1)$. Set $\eps_0=(2(m+1)(k+1))^{-1}$ and fix $\alpha=\tfrac k{k+1}+\eps$ for some $\eps\le \eps_0$. Consider the following construction of a graph $G_\alpha$.
For $n\ge 4(k+2)(m+1)$ let  $[n]=V_1\dcup\dots\dcup V_{k+1}$ be a vertex partition
with each part of size~$n/(k+1)$ (for simplicity, we assume that $n$ is divisible by $k+1$).
Moreover, for every $i=1,\dots,k+1$, fix a subset $W_i\subseteq V_i$ of
size~$|W_i|=\lceil \eps n\rceil$. Let $G=G_\alpha$ be the $n$-vertex graph consisting of the union of the complete $(k+1)$-partite graph with vertex partition~$V_1\dcup \dots\dcup V_{k+1}$ and $k+1$ complete bipartite graphs with vertex classes~$W_i$ and~$V_i\setminus W_i$ for $i=1,\dots,k+1$.
Clearly, $\delta(G)\ge(\tfrac{k}{k+1}+\eps)n$. Set $W=\bigcup_{i=1}^{k+1} W_i$, for convenience.

Let $p\le cn^{-2/\l}$, where $c=c(\ell,\gamma)$ is defined in Fact~\ref{cheb} with $\gamma=(4(m+1))^{-1}$. We are going to show that a.a.s. $H=G\cup G(n,p)$ does not contain any copy of~$C^{m}_n$.
Note that any $C^m_n\subset H$
contains $\lfloor n/(m+1)\rfloor$ vertex-disjoint copies of~$K_{m+1}$ and only at most $\big|W\big|=(k+1)\lceil \eps n\rceil$ of them have a vertex in $W$.

Consider a copy $K$ of~$K_{m+1}$ which is disjoint from $W$.
Since $\lceil \frac{m+1}{k+1}\rceil = \ell$, by Pigeonhole Principle, $K$ must contain a copy $K'$ of $K_\ell$ that lies entirely in some set~$V_i$ and thus $K'$ must be a subgraph of $G(n,p)$. In conclusion, if $C^{m}_n \subset H$, then the random graph $G(n,p)$ must contain at least
\[
\left\lfloor \frac{n}{m+1}\right\rfloor-(k+1)\lceil \eps n\rceil
\ge \frac{n}{m+1}-1 -(k+1)(\eps_0 n+1)
= \frac{n}{2(m+1)}-k-2
\ge \gamma n
\]
copies of $K_\l$. However, by Fact~\ref{cheb}, for $p\le c n^{-2/\l}$, a.a.s.\ there are fewer than $\gamma n$ copies of $K_\l$ in $G(n,p)$. This establishes part (b) of Definition \ref{dirac_thr} with $\alpha_0=(\tfrac k{k+1}+\eps_0)$.
\end{proof}

\medskip


\medskip

\begin{proof}[Proof of Theorem~\ref{thm:6k2}]
Let $m=6k+2$ and $k=1,2$. (For $k\ge3$, Theorem~\ref{thm:6k2} follows from Corollary~\ref{cor:main_bounds}(v).) Theorem~\ref{thm:main}, applied with $\ell=6$ and $r=2$, yields that $d_{k,m}\le n^{-1/3}$.
We will show that also $d_{k,m}\ge n^{-1/3}$. For $\eps_1=1/288$ and $\eps_2=1/105$, fix $\alpha_k=\tfrac k{k+1}+\eps_k$, $k=1,2$, and consider the graph $G_{\alpha_k}$ constructed in the proof of Theorem~\ref{thm:main_bounds}. Further, take $p\le cn^{-1/3}$, where $c=c(6,\eps_k)$ is defined in Fact~\ref{cheb}. Consequently, there are a.a.s.\ no more than $\eps_k n$ copies of $K_6$ in $G(n,p)$.

Assume that $H=G \cup G(n,p)$ contains a copy $C$ of $C_n^{m}$. After removing from $H$ all vertices in~$W$ as well as at least one vertex from each copy of $K_6$ in $G(n,p)$, we obtain a $K_6$-free subgraph $H' \subset H$ on $n'\ge n-(k+2)\eps_k n$ vertices such that $H'[V_j]\subset G(n,p)$ for $1\le j\le k+1$. Observe that $H' \cap C$ contains an $m$-path~$P$ with
\[
|V(P)|\ge \frac{n'}{(k+2)\eps_k n} \ge \frac{1}{(k+2)\eps_k}-1\ge \frac{1}{(k+3)\eps_k}.
\]
Now, consider separately the cases $k=2$ and $k=1$. The former one is a bit easier. We  have $m=14$ and $H'$ is tripartite.
For each $1\le j\le 3$, the subgraph  $Q_j=P[V_j]$ contains a spanning 4-path in $G(n,p)$. Indeed, let $v_1,\dots,v_5$ be five consecutive vertices  of $Q_j$ (in the linear order determined by $P$). Since there are no copies of $K_6$ in $H'$, there are at most $3+2\cdot 5$ vertices between $v_1$ and $v_5$ on $P$. Therefore, $v_s$ and $v_t$, $1\le s<t\le 5$, are adjacent in $P$ and thus in $Q_j$.

Let $q=\max\{|V(Q_1)|,|V(Q_2)|,|V(Q_3)|\}$. Then, $G(n,p)$ contains a 4-path with
\[
q\ge \frac{1}{3}|V(P)| \ge \frac{1}{15\eps_2} = 7
\]
vertices and $4q-(1+2+3) = 4q-6$ edges. However, the expected number of such subgraphs in $G(n,p)$ with $p = O(n^{-1/3})$  is smaller than
\[
n^qp^{4q-6}=O(n^{-q/3+2}) = O(n^{-7/3+2}) =o(1),
\]
which by Markov's inequality implies that a.a.s.\ there are not such copies at all.

For $k=1$, we have $m=8$ and $H'$ is bipartite. Now, we can only claim that each $Q_j$ contains a spanning 3-path $P_j$ in $G(n,p)$. Indeed, let $v_1,\dots,v_4$ be four consecutive vertices  of $Q_j$. Similarly to the case $k=2$, $v_1,\dots, v_4$ form a clique in $G(n,p)$. Fortunately, there are more edges in $Q_j$. To see it,
divide  $V(P)$ into $\lfloor |V(P)|/9 \rfloor$ consecutive copies of $K_9$. Each of them contributes a copy of $K_5$ to either $Q_1$ or $Q_2$ and thus an extra edge which is not present in $P_j$. Without loss of generality, suppose that $Q_1$ contains at least $\tfrac12\lfloor|V(P)|/9\rfloor$ copies of~$K_5$. Then $Q_1$ is a subgraph of $G(n,p)$ with $q$ vertices and at least
\[
|P_1| + \frac{|V(P)|}{18} - \frac{1}{2} \ge
3q-3+\frac{1}{72\eps_1} - \frac{1}{2} = 3q+\frac{1}{2}
\] edges. Again, with $p = O(n^{-1/3})$, by Markov's inequality, there are no such subgraphs in $G(n,p)$.
\end{proof}

\section{Outline of the proof of Theorem \ref{thm:main}}

The proof of Theorem~\ref{thm:main} is based on the {\it absorption method} and
follows  the general outline of the proof in \cite{DRRS}.
It relies on four lemmas, the Connecting Lemma, the Reservoir Lemma, the Absorbing Lemma, and the Covering Lemma. The last three of these lemmas will be stated here and proved in the forthcoming sections. At the end of this section we  provide a short proof of Theorem \ref{thm:main} based on these  lemmas. The Connecting Lemma is used only in the proof of Absorbing Lemma and will be stated and proved in Section \ref{sec:connect}.
Each of these lemmas provides the existence of some $m$-paths in $H=G\cup G(n,p)$, so the proofs  involve mixed techniques from extremal graph theory and random graphs.

Throughout the rest of the paper we assume that $m = k\l+r$ and $\l\ge r(r+1)$, where $k,\ell\in \NN$ and $r\ge 0$.
Observe that if $\l=1$, then necessarily $r=0$ and so $m=k$. This case, however, follows deterministically from~\cite{KSS1998}, since $\delta(G)\ge kn/(k+1)$.
Therefore, from now on we will be assuming that $\ell \ge 2$.  Note that by the monotonicity of $d_{k,m}(n)$ as a function of $m$, it is enough to consider only the largest $r$ satisfying $\ell\ge r(r+1)$. In particular, in view of $\ell \ge 2$, we may also assume that $r\ge 1$.

Given an $m$-path $P=(v_1,\dots,v_t)$, the sequences $(v_1,\dots,v_{m})$ and $(v_t,\dots,v_{t-m+1})$ are called the \emph{ends} of $P$.
We say that $P$ \emph{connects} its ends and the vertices of $P$ not belonging to its ends are called \emph{internal}.
As every segment of consecutive $m+1$ vertices of an $m$-path forms a clique $K_{m+1}$, the ends span $m$-cliques. If $K$ and $K'$ are the ordered cliques induced by the
ends of an $m$-path~$P$, we may also say that~$P$ \emph{connects} $K$ and $K'$ .

\begin{dfn}\label{connectable_tuples}
Given $\xi>0$, an $m$-tuple $\seq{x} = (x_1, x_2,\ldots,x_m)$ of vertices of an $n$-vertex graph $G$ is \textit{$\xi$-connectable} if there exist $\xi n^{k+1}$  (ordered) copies $(y_1, y_2,\ldots,y_{k+1})$ of $K_{k+1}$ in $G$ with the property that for each $i = 1, 2,\ldots,k+1$, $y_i\in N_G(x_{(i-1)\ell+1},\ldots,x_m)$.
An $m$-path in $H$ is \emph{$\xi$-connectable} if both its ends are $\xi$-connectable $m$-tuples in $G$.
\end{dfn}
\noindent Note that for $0<\xi_1<\xi_2$, if an $m$-tuple is $\xi_2$-connectable, then is is also $\xi_1$-connectable.

We may now state the  Reservoir Lemma which is proved in Section~\ref{sec:connect}. Here and below $V$ always stands for the vertex set of the graphs $G$, $G(n,p)$, and $H$.

\begin{lemma}[Reservoir Lemma]\label{prop:reservoir}
For all  $\eps>0$ and $\xi>0$  there exists $\gamma>0$
such that for all sufficiently large $C=C(\xi,\gamma)\ge1$  and for every $n$-vertex graph $G$ with
$\delta(G)\geq (\tfrac{k}{k+1}+\eps)n$ there exists a set of vertices $R\subseteq V$
of size $\tfrac12\gamma^2n\le|R|\le2\gamma^2n$
such that for $p=p(n)\geq  Cn^{-2/\l}$ a.a.s.\ $H=G\cup G(n,p)$ has the following property.

For every $S\subseteq R$ with  $|S|\leq \sqrt\gamma |R|$ and for every pair of disjoint,
ordered $\xi$-connectable $m$-tuples $\seq{x}, \seq{x}'$ in~$G-R$, there exists an~$m$-path in $H$ connecting  $\seq{x}$ and $ \seq{x}'$
with~$\l(k+1)2^{k+1}$ internal vertices, all from $R\setminus S$.
\end{lemma}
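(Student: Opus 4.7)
The plan is: sample $R$ by independent Bernoulli trials; for each pair of $\xi$-connectable ends exhibit deterministically many candidate connector templates whose interior lies in $R$; upgrade these to actual $m$-paths in $H$ via Janson's inequality applied to $G(n,p)$; union-bound over pairs; and finally check that deleting any admissible $S$ still leaves a valid connector.

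Include each vertex of $V$ in $R$ independently with probability $\gamma^2$, for a sufficiently small $\gamma=\gamma(\eps,\xi)>0$; Chernoff's inequality then gives $\tfrac12\gamma^2 n\le|R|\le 2\gamma^2 n$ with probability $1-\exp(-\Omega(n))$. Now fix disjoint $\xi$-connectable $m$-tuples $\seq{x},\seq{x}'$ in $V\setminus R$. Build candidate connecting $m$-paths through $R$ iteratively: the $\xi$-connectability of $\seq{x}$ provides $\xi n^{k+1}$ ordered $K_{k+1}$-extensions $(y_1,\dots,y_{k+1})$ that one can append to the end of $\seq{x}$ to obtain a longer $m$-path, and a key sub-claim --- proved by an averaging argument based on $\delta(G)\ge(\tfrac{k}{k+1}+\eps)n$ --- is that $\xi$-connectability propagates to the new end $(x_{k+2},\dots,x_m,y_1,\dots,y_{k+1})$, possibly with a slightly smaller parameter but still of the same order. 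Repeating this extension a bounded number of times (the precise count $\l\cdot 2^{k+1}$ being dictated by the required internal length), working inward from both $\seq{x}$ and $\seq{x}'$ and insisting that every newly appended vertex belong to $R$, yields a family $\cF_{\seq{x},\seq{x}'}$ of $\l(k+1)2^{k+1}$-tuples with $|\cF_{\seq{x},\seq{x}'}|\ge\tau|R|^{\l(k+1)2^{k+1}}$ for some $\tau=\tau(\xi)>0$, whenever $R$ is typical.

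Each template becomes an honest $m$-path in $H$ once the "long" edges inside the overlapping $K_{m+1}$-blocks --- those not already guaranteed by $\xi$-connectability --- are supplied by $G(n,p)$. These missing edges form a constant-size gadget whose densest subgraph is a copy of $K_\l$; this is exactly where the hypothesis $\l\ge r(r+1)$ enters, ensuring that the $2$-density of every sub-template is at most $(\l-1)/2$, which matches $p=Cn^{-2/\l}$. Consequently $\Phi\ge n^{\Omega(1)}$ in the sense of Theorem~\ref{Janson2}, and that inequality yields at least $\tfrac12\tau|R|^{\l(k+1)2^{k+1}}p^e$ realised connecting $m$-paths with probability $1-\exp(-\Omega(n))$, where $e=O(1)$ is the number of required random edges per template. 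Since this error probability beats the $n^{2m}$-sized union bound over pairs $(\seq{x},\seq{x}')$, the lower bound holds simultaneously for all pairs.

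It remains to accommodate $S$. By the transitivity of the construction inside $R$, each vertex of $R$ lies on at most $O(|\cF_{\seq{x},\seq{x}'}|/|R|)$ templates; hence $S$ with $|S|\le\sqrt\gamma|R|$ destroys at most $O(\sqrt{\gamma}\,|\cF_{\seq{x},\seq{x}'}|)$ of them, and for $\gamma$ small at least one surviving template is realised as an $m$-path in $H$ with all $\l(k+1)2^{k+1}$ internal vertices in $R\setminus S$. The main technical obstacle in this plan is the density analysis underlying Janson's inequality: one has to check uniformly over all sub-gadgets of the connector that none is denser than $K_\l$, so that $\Phi\gg1$ and Theorem~\ref{Janson2} delivers the required exponentially small failure probability.
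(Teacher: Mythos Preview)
Your handling of the set $S$ has a genuine gap. You argue that $S$ destroys at most $O(\sqrt\gamma\,|\cF_{\seq{x},\seq{x}'}|)$ \emph{templates} and then assert that ``at least one surviving template is realised''. But Janson's inequality only guarantees roughly $\tfrac12\tau|R|^{\ell(k+1)2^{k+1}}p^e$ \emph{realised} connectors, and at the threshold $p=Cn^{-2/\ell}$ the factor $p^e$ is polynomially small in $n$ (already $p^{\binom{\ell}{2}}\le C^{\binom{\ell}{2}}n^{1-\ell}$ for a single $K_\ell$ in the gadget). Hence the number of realised templates is negligible compared with the $O(\sqrt\gamma\,|\cF_{\seq{x},\seq{x}'}|)$ templates you permit $S$ to kill, and nothing prevents all realised connectors from lying among the destroyed ones. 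To rescue this route you would need a uniform \emph{upper-tail} bound on the number of realised templates through each vertex of $R$, which you have not supplied and which is a separate concentration argument.

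The paper avoids this entirely by folding $S$ into the union bound: for each \emph{fixed} $S$ one first removes $S$ from $R$, builds the template family inside $R\setminus S$ (still of size $\Omega(|R|^{\ell(k+1)2^{k+1}})$ since $|S|\le\sqrt\gamma|R|$), and only then applies Proposition~\ref{Janson}. The point is that the failure probability there is $\exp(-\tau^2 c C n)$ with $C$ a free parameter, so for $C$ large it beats $2^n\cdot n^{2m}$, the number of choices of $(S,\seq{x},\seq{x}')$. Note this genuinely requires $\Phi_B\ge Cn$ with a controllable constant, not merely $\Phi\ge n^{\Omega(1)}$.

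A second, structural difference: the paper does not ``propagate connectability''. It uses connectability once on each side to pass from $\seq{x},\seq{x}'$ to $(k+1)$-cliques $\seq{y},\seq{y}'$ in $R$, then invokes the deterministic $k$-walk connecting lemma from~\cite{DRRS} (Lemma~\ref{lem:dcon}) inside $G[R]$ to produce $\Omega(|R|^{L})$ $k$-paths of length $L=(k+1)2^{k+1}$, takes $\ell$-blow-ups via Corollary~\ref{blow}, and reads off the missing edges as the explicit braid graph $(k+1)B(\ell,r,2^{k+1})$ through Proposition~\ref{m-path decomposition}. Your iterated-extension sketch does not make clear what the $G$-edges actually guarantee at each step; in particular, appending $(y_1,\dots,y_{k+1})$ to $\seq{x}$ does \emph{not} yet give an $m$-path segment, since by Definition~\ref{connectable_tuples} one only has $y_2\in N_G(x_{\ell+1},\dots,x_m,y_1)$, not $y_2\in N_G(x_2,\dots,x_m)$.
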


The next result (proved in Section~\ref{sec:absorb}) yields the existence of an $m$-path~$A$, called {\it absorbing}, which can absorb any small set of
vertices. This enables us to reduce our goal to an easier problem of finding an \emph{almost} spanning $m$-cycle
containing~$A$. 

\begin{lemma}[Absorbing Lemma]\label{prop:absorbing}
For every  $\eps>0$ there exists $\xi>0$  such that for  sufficiently small $\gamma=\gamma(\eps,\xi)>0$ and sufficiently large $C=C(\eps,\xi)\ge1$, every $n$-vertex graph $G$ with $\delta(G)\geq (\tfrac{k}{k+1}+\eps)n$,
and every $p = p(n)\ge Cn^{-2/\l}$, a.a.s.\ $H=G\cup G(n,p)$ has the following property.

For every set of vertices $R\subseteq V$ with $|R|\le2\gamma^2n$ the graph $H-R$ contains
a $\xi$-connectable $m$-path $A$ with $|V(A)|\le\gamma n/2$ such that for every $U\subseteq V\setminus V(A)$
with $|U|\le 3\gamma^2n$ there exists an $m$-path $A_U$
	with $V(A_U)=V(A)\cup U$ having the same ends as~$A$.
\end{lemma}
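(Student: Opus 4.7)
The plan is to follow the absorption strategy of~\cite{DRRS}: build~$A$ by gluing together many short \emph{local absorbers}, each flexible enough to absorb any one of many potential vertices, and then apply the Connecting Lemma from Section~\ref{sec:connect} to chain them into a single $\xi$-connectable $m$-path.

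First, for each vertex $v\in V$, I define a \emph{$v$-absorber} to be an $m$-path $T$ in~$H$ on a constant number $t=t(m)$ of vertices such that~$T$ admits a second ordering~$T'$ on $V(T)\cup\{v\}$ that is also an $m$-path with the same end-$m$-tuples as~$T$. Concretely, $T$ may be taken as a $(2m+2)$-vertex structure with a ``slot'' into which~$v$ can be inserted provided~$v$ is adjacent in~$H$ to the~$2m$ vertices flanking the slot. Using $\delta(G)\ge(\tfrac{k}{k+1}+\eps)n$ to supply cross-class edges of a $(k+1)$-partition of~$V$, and invoking Theorem~\ref{Janson2} to guarantee the necessary copies of $K_\ell$ within each class from $G(n,p)$, one concludes that a.a.s.\ every~$v$ has at least $\beta n^{t}$ absorbers in~$H$, where $\beta=\beta(\eps)>0$ is chosen before fixing $\xi$ and $\gamma$; moreover, each such~$T$ is $\xi$-connectable once $\xi$ is small enough.

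Next, I select a random family $\cF$ of local absorbers by including each candidate independently with probability $q=\Theta(\gamma^2/n^{t-1})$. Chernoff's inequality yields a.a.s.\ that $|\cF|=O(\gamma^2 n)$, that every $v$ has at least $10\gamma^2 n$ absorbers for it in~$\cF$, and that the number of intersecting pairs in~$\cF$ is $o(\gamma^2 n)$. Discarding from~$\cF$ one absorber of every intersecting pair and all absorbers meeting the given set~$R$ produces a pairwise vertex-disjoint subfamily~$\cF'$, disjoint from~$R$, in which every~$v$ still has at least~$3\gamma^2 n$ absorbers. Because each absorber in~$\cF'$ is $\xi$-connectable, I repeatedly invoke the Connecting Lemma, with a small auxiliary reservoir of vertices disjoint from~$R\cup V(\cF')$, to link the members of~$\cF'$ into a single $m$-path~$A$ with $|V(A)|\le\gamma n/2$ and $\xi$-connectable ends.

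Finally, given $U\subseteq V\setminus V(A)$ with $|U|\le 3\gamma^2 n$, I absorb its vertices greedily: for each $u\in U$, at least one of the $\ge 3\gamma^2 n$ absorbers in~$\cF'$ capable of absorbing~$u$ has not yet been used, and rerouting~$A$ through it incorporates~$u$ while preserving the ends. The principal difficulty lies in the first step: designing an absorber template that (i)~accommodates the insertion of~$v$ for polynomially many choices of~$v$ and (ii)~appears $\Omega(n^{t})$ times per vertex in~$H$, despite $\delta(G)$ being below even the Tur\'an threshold for $K_{k+2}$. The hypothesis $\ell\ge r(r+1)$ is used precisely so that the missing within-class copies of $K_{m+1}$ inside the absorber can each be completed by $O(1)$ copies of $K_\ell$ from $G(n,p)$, for which $p=\Theta(n^{-2/\ell})$ suffices.
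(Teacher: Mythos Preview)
Your overall strategy matches the paper's: define per-vertex absorbers, count them, randomly thin to a linear-size disjoint family, then chain with the Connecting Lemma. Two points, however, need repair.

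First, the claim that every $v$ has $\beta n^{t}$ full absorbers in $H$ with $\beta=\beta(\eps)$ cannot hold. A full absorber is an $m$-path on $2m$ vertices, so beyond the deterministic $(r,\ell,\dots,\ell,r)$-blow-up of $P^k_{2k+2}$ in $G$ it needs a fixed copy of $B^-$ (essentially $(k+1)B(\ell,r,2)$ with two cliques trimmed) from $G(n,p)$. Janson's inequality gives $\Theta(\Psi)$ such completions, where $\Psi=n^{2m}p^{e(B^-)}$ satisfies $\Psi=\Omega(n^{k+1})$ but $\Psi=o(n^{2m})$. The paper therefore first counts \emph{half}-absorbers in $G$ via supersaturation inside $G[N(v)]$ (Proposition~\ref{prop:half-v-absorbers}), upgrades to $X_v\ge\tfrac12\beta'\Psi$ full absorbers by Proposition~\ref{Janson}, and then samples with $q=\gamma^{3/2}n/\Psi$ rather than $q=\Theta(\gamma^2/n^{t-1})$. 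Crucially, the intersecting-pairs bound you need is the second-moment estimate $Y\le D\Psi^2/n$ for copies of $B^-$ in $G(n,p)$ (Proposition~\ref{prop:absorbers}(ii)); a naive $n^{2t-1}$ count of overlapping $2m$-tuples would swamp the calculation.

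Second, ``each such $T$ is $\xi$-connectable once $\xi$ is small enough'' hides real work. Connectability of an end $m$-tuple means there are $\xi n^{k+1}$ witnessing ordered $K_{k+1}$'s in $G$ with prescribed neighbourhood conditions; this is not automatic from the minimum-degree hypothesis. In the paper one first builds a $t$-blow-up of $P^k_{2k+2}$ inside $N_G(v)$ and then applies the interlacing argument (Proposition~\ref{st}) to cut it down to an $(r,\ell,\dots,\ell,r)$-blow-up whose two end-$m$-tuples are $\xi$-connectable. Without this step you cannot invoke the Connecting Lemma to join the absorbers.
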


The last lemma below states that almost the whole graph under consideration can be covered by a linear in $n$ number of $m$-paths. 
These  paths will be eventually connected together with the absorbing path $A$,
to produce the desired $m$-th power of an almost spanning cycle. We shall prove the
Covering Lemma in Section~\ref{sec:cover}.

\begin{lemma}[Covering Lemma]\label{prop:covering}
For every  $\eps>0$ there exist $\xi>0$ and $\gamma>0$ such that for sufficiently large $C=C(\xi,\gamma)\ge1$,
for every $n$-vertex graph $G$ with $\delta(G)\geq (\tfrac{k}{k+1}+\eps)n$
and every $p = p(n)\ge Cn^{-2/\l}$, a.a.s.\ $H=G\cup G(n,p)$ has the following property.

For every subset $Q\subset V$ with $|Q|\le\gamma n$ there exists a family~$\cP$ of at most $\gamma^3 n$ vertex-disjoint $\xi$-connectable $m$-paths in $H$ with vertices in $V \setminus Q$ covering all but at most $\gamma^2 n$ vertices of $V\setminus Q$.
\end{lemma}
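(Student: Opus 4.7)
The plan is to erect the required $m$-paths on top of a deterministic scaffolding provided by $G$, using random edges from $G(n,p)$ to supply the ``hard'' missing edges. Applying the Koml\'os--S\'ark\"ozy--Szemer\'edi theorem~\cite{KSS1998} to $G-Q$---whose minimum degree is still at least $(\tfrac{k}{k+1}+\tfrac\eps2)(n-|Q|)$ since $|Q|\le\gamma n$ with $\gamma\ll\eps$---I obtain a $k$-th power of a Hamilton cycle $\cC$ spanning $V\setminus Q$. Cutting $\cC$ into $n/L$ consecutive arcs of some fixed length $L=1/\gamma^3$, the strategy is to promote each such arc to an $m$-path by adding a bounded number of edges from $G(n,p)$. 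Within any arc, the $k$-power structure on $\cC$ already provides every $K_{k+1}$ on $k+1$ consecutive vertices, and the only edges separating an $m$-path from its $k$-power are the ``long chords'' joining pairs at $\cC$-distance strictly between $k$ and $m$; thanks to the assumption $\l\ge r(r+1)$, these long chords organise themselves slot by slot into copies of $K_\l$ spanning prescribed $\ell$-tuples of vertices along the arc.

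To count the arcs that admit a successful promotion, I would set up a family $\cG$ of copies of $K_\l$ in $K_n$ whose vertex sets coincide with the splicing slots on $\cC$, so that each member of $\cG$ supplies (together with the $k$-power structure of $\cC$ in $G$ and with the common neighbourhoods guaranteed by $\delta(G)$) the cliques $K_{m+1}$ required across the corresponding window. A standard counting argument yields $|\cG|=\Omega(n^\l)$, and $\Phi_{K_\l}=\Theta(n)$ when $p=Cn^{-2/\l}$, so Theorem~\ref{Janson2} shows that $G(n,p)$ contains $\Omega(n)$ members of $\cG$ a.a.s., with failure probability small enough that at most $\gamma^5 n$ of the $n/L=\gamma^3 n$ arcs are left unserved by a greedy assignment of $K_\l$'s to arcs. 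The unserved arcs contribute at most $\gamma^5 n\cdot L=\gamma^2 n$ to the uncovered set, while the served arcs yield at most $\gamma^3 n$ vertex-disjoint $m$-paths. The $\xi$-connectability of the ends of each such $m$-path is inherited from the $K_{k+1}$-cliques of $\cC$ at the arc's boundary: the witness $(y_1,\ldots,y_{k+1})$ of Definition~\ref{connectable_tuples} is built iteratively by picking $y_i$ from the common $G$-neighbourhood of the required suffix, which by the minimum degree hypothesis has size at least $\eps n$ at each step.

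The principal obstacle is the design of the splicing family $\cG$: it must simultaneously be large ($|\cG|=\Omega(n^\l)$), structurally balanced ($\Phi_{K_\l}=\Theta(n)$), and slot-compatible with the $k$-power skeleton $\cC$, so that Theorem~\ref{Janson2} yields a failure probability small enough to be summed over all $\gamma^3 n$ arcs. The condition $\l\ge r(r+1)$ enters precisely here---it is what allows the long chords of each window of the $m$-path to be packaged into a single copy of $K_\l$---and verifying this combinatorial identity, together with an inclusion-exclusion accounting of edges of $G$ already in place along $\cC$, is the heart of the argument. Vertex-disjointness across arcs is automatic from the disjointness of the arcs themselves, so no further matching-theoretic step is needed.
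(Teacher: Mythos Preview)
Your approach has a structural gap that cannot be repaired as stated. You apply Koml\'os--S\'ark\"ozy--Szemer\'edi to obtain a \emph{single fixed} $k$-th power of a Hamiltonian cycle $\cC$ in $G-Q$, and then hope that the edges of $P^m_L\setminus P^k_L$ on each arc ``organise themselves slot by slot into copies of $K_\ell$''. But they do not: the edges missing from $P^k_L$ to make $P^m_L$ are all pairs at path-distance in $(k,m]$, and these do not decompose into cliques $K_\ell$ on prescribed $\ell$-tuples. The decomposition in Proposition~\ref{m-path decomposition} works only because one starts from the $\ell$-\emph{blow-up} $P^k_{(k+1)t}(\ell)$, not from a bare $P^k_L$; the needed random piece is then a braid $(k+1)B(\ell,r,t)$, not a handful of $K_\ell$'s, and the condition $\ell\ge r(r+1)$ enters as the balancedness $m_B=\ell/2$ of that braid (Proposition~\ref{Janson}), not as any slot decomposition of long chords. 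This is why the paper manufactures many blow-ups via the Regularity Lemma and Corollary~\ref{blow} rather than relying on a single KSS cycle.

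Even setting the structural issue aside, your counting collapses. With $\cC$ fixed, the ``slots'' along it are specific $\ell$-tuples of vertices, at most $O(n)$ of them, so $|\cG|=O(n)$ and $\tau=|\cG|/n^\ell=O(n^{1-\ell})\to 0$; Theorem~\ref{Janson2} then gives no usable tail bound. This in turn kills the union bound you need over all $Q$ (the cycle $\cC$, and hence the slots, depend on $Q$, so you must beat $2^n$ choices). Finally, the $\xi$-connectability of the arc ends is not automatic: Definition~\ref{connectable_tuples} requires $y_1$ to lie in the common neighbourhood of all $m$ vertices of the end, and for a generic $m$-tuple on $\cC$ with $m>k+1$ there is no reason this set is even nonempty---the paper secures connectability only through the delicate blow-up refinement of Proposition~\ref{st}. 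The Regularity--blow-up route in Section~\ref{sec:cover} is doing real work that your single-cycle scaffold cannot replace.
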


We conclude the present section with a proof of our main result assuming the three
lemmas stated above. Although the statements of Lemmas \ref{prop:reservoir} - \ref{prop:covering} are not monotone in~$\gamma$, it follows from the three proofs (see Sections 6-8) that whenever they are true for some $\gamma_0>0$, they are also true for any $0<\gamma<\gamma_0$.

\begin{proof}[Proof of Theorem~\ref{thm:main}] 
We begin by fixing the constants. During this process we adopt the convention that a constant coming from Lemma 4.x receives a subscript x.
Let $k\in \NN$ and $\alpha\in\bigl(\frac k{k+1}, 1\bigr)$ be given and set~$\eps=\alpha-\frac k{k+1}$. Plugging $\eps$ into Lemmas~\ref{prop:absorbing} and~\ref{prop:covering} we obtain, respectively, constants $\xi_3$, $\gamma_3$, $C_3$ and  $\xi_4$, $\gamma_4$, $C_4$. Plugging $\xi = \min \{\xi_3,\xi_4\}$ into Lemma~\ref{prop:reservoir} we obtain $\gamma_2$ and $C_2$. Finally, we set
\[
\gamma=\min\left\{\gamma_2, \gamma_3, \gamma_4, \frac14, \left(\l(k+1)2^{k+2}\right)^{-2}\right\}
\]
and $C=\max\{C_2, C_3, C_4\}$.
	
Let an $n$-vertex graph $G$ with $\delta(G)\geq (\tfrac{k}{k+1}+\eps)n$ and $p\ge Cn^{-2/\l}$ be given. We need to check that a.a.s.\ the graph $H=G\cup G(n,p)$ contains a copy of $C_n^{m}$.
For this purpose it suffices to prove that a graph $H$ satisfying the conclusions of all three lemmas above contains a copy of~$C_n^{m}$.

By Lemma~\ref{prop:reservoir} there is a reservoir set $R\subseteq V$ of size $\tfrac12\gamma^2n\le|R|\le2\gamma^2n$. By Lemma~\ref{prop:absorbing} there exists an absorbing $m$-path $A\subseteq H-R$. Since $|R|+|V(A)|\le(2\gamma^2+\gamma/2)n<\gamma n$, we can apply Lemma~\ref{prop:covering} to $Q=R\cup V(A)$ and obtain a collection $\cP$ of at most $\gamma^3n$ vertex-disjoint $\xi$-connectable $m$-paths in $H-Q$
whose vertices cover the set $V \setminus Q$  except for a small subset $U'\subseteq V\sm Q$ with $|U'|\le \gamma^2n$. Next, we want to create the $m$-th power of a long cycle in $H$ by connecting together all paths in $\cP \cup \{A\}$.

To this end, we make $|\cP|+1$ successive applications of Lemma~\ref{prop:reservoir}. In each of them we let $\seq{x}$ and $\seq{x}'$ be the ends of the $m$-paths we wish to connect and let $S\subseteq R$ be the set of  vertices that were used as internal vertices in previous applications. When arriving at the last step of this process, that is, when closing the $m$-cycle, the set $S$ of vertices we need to avoid has size
	\[
		|S|=\ell (k+1)2^{k+1} |\cP|\le \ell(k+1)2^{k+1} \gamma^3 n \le \l(k+1)2^{k+2}\gamma |R| \le
		\sqrt\gamma|R|\,,
	\]
which justifies  repeated applications of Lemma~\ref{prop:reservoir}.
	
Let $F$ be the obtained $m$-cycle. The complement $U=V\sm V(F)$ satisfies
\[
|U|=|U'|+|R\sm V(F)|\le 3\gamma^2n,
\]
whence, by Lemma~\ref{prop:absorbing}, there exists an $m$-path $A_U$ with $V(A_U)=V(A)\cup U$ having the same ends as $A$. Therefore, we can replace $A$ by $A_U$ in $F$ and obtain the desired $m$-th power of a Hamiltonian cycle in $ H$.
\end{proof}

\section{Preliminaries} In this section we present results which serve as tools in the  proofs of the lemmas stated in the previous section.

\subsection{Neighbourhoods in graphs of large minimum degree}\label{sec:degrees}
We recall the following standard notation. For a set $V$ and an integer $j\in\NN$ we write $\binom Vj$ for the family of all $j$-element subsets of $V$. Given a graph $G=(V,E)$ we write $N_G(u)$
for the neighbourhood of a vertex~$u\in V$ in $G$.
More generally, for a subset $U\subseteq V$ we set
\[
	N_G(U)=\bigcap_{u\in U}N(u)
\]
for the \emph{joint neighbourhood} of $U$. For simplicity we may suppress $G$ in the subscript and
for sets $\{u_1,\dots,u_r\}$ we may write $N(u_1,\dots,u_r)$ instead of $N(\{u_1,\dots,u_r\})$. We will use the following result from~\cite[Lemma 3.1]{DRRS}.

\begin{prop}\label{lem:scale}
	For every integer $k\geq 0$ and $\eps>0$ the following holds for every $n$-vertex graph $G=(V,E)$
	with $\delta(G)\geq(\tfrac{k}{k+1}+\eps)n$. For every $j\in[k+1]$ and every $J\in \binom Vj$
 	we have
	\begin{equation}\label{claim1:N}
		|N(J)|\ge\left(\frac{k+1-j}{k+1}+j\eps \right)n\,.
	\end{equation}
	Furthermore, for $j\in[k]$ the induced subgraph $G[N(J)]$ satisfies
	\begin{equation}\label{claim1:d}
		\delta(G[N(J)])
		\ge
		\left(\frac {k-j}{k-j+1}+\eps \right)|N(J)|
	\end{equation}
	for every $J\in \binom Vj$.
	\end{prop}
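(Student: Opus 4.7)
Both parts follow from elementary degree counting. For the cardinality bound (\ref{claim1:N}) I would express the complement as $V\sm N(J)=\bigcup_{u\in J}(V\sm N(u))$ and apply a union bound: since $\delta(G)\ge\bigl(\tfrac{k}{k+1}+\eps\bigr)n$, each $V\sm N(u)$ has size at most $\bigl(\tfrac{1}{k+1}-\eps\bigr)n$, so summing over the $j$ vertices of $J$ and rearranging yields $|N(J)|\ge\bigl(\tfrac{k+1-j}{k+1}+j\eps\bigr)n$ immediately.

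For (\ref{claim1:d}) I would fix any $v\in N(J)$ and write
\[
\deg_{G[N(J)]}(v)=|N(J)|-|N(J)\sm N(v)|\ge |N(J)|-\bigl(\tfrac{1}{k+1}-\eps\bigr)n,
\]
using $|N(J)\sm N(v)|\le|V\sm N(v)|\le\bigl(\tfrac{1}{k+1}-\eps\bigr)n$. The desired inequality then rearranges to $\bigl(\tfrac{1}{k-j+1}-\eps\bigr)|N(J)|\ge\bigl(\tfrac{1}{k+1}-\eps\bigr)n$. Substituting the bound from (\ref{claim1:N}) on the left-hand side and expanding, one observes the identity
\[
\bigl(\tfrac{1}{k-j+1}-\eps\bigr)\bigl(\tfrac{k+1-j}{k+1}+j\eps\bigr)=\bigl(\tfrac{1}{k+1}-\eps\bigr)+j\eps\bigl(\tfrac{1}{k+1}+\tfrac{1}{k-j+1}-\eps\bigr),
\]
and the extra term is nonnegative because $\eps<\tfrac{1}{k+1}$.

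There is no real obstacle here — the whole proof is bookkeeping — but the one mildly delicate point is to resist the tempting route of applying (\ref{claim1:N}) directly to the $(j{+}1)$-set $J\cup\{v\}$. That approach would give $|N(J\cup\{v\})|\ge\bigl(\tfrac{k-j}{k+1}+(j{+}1)\eps\bigr)n$, but comparing this against $\bigl(\tfrac{k-j}{k-j+1}+\eps\bigr)|N(J)|$ via the trivial bound $|N(J)|\le n$ would force $\eps$ to exceed a constant depending on $k$ and $j$, which fails for small $\eps$. The subtractive step above, which trades the $(j{+}1)$-fold neighbourhood for a single non-neighbourhood, bypasses this issue and makes the constants match on the nose.
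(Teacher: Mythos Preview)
Your proof is correct; both the union-bound computation for~\eqref{claim1:N} and the subtractive argument for~\eqref{claim1:d} (including the algebraic identity you state) check out, and your remark about needing $\eps<\tfrac{1}{k+1}$ is justified since $\delta(G)<n$ forces this. The paper does not give its own proof of this proposition but simply quotes it from~\cite{DRRS}, so there is nothing to compare your approach against here.
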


\subsection{The decomposition} We begin with a crucial decomposition of the $m$-path into two subgraphs.

\begin{dfn}
For $r\ge2$, two sequences of vertices $\seq{v} = (v_1, v_2, \ldots, v_r)$ and $\seq{u} = (u_1, u_2, \ldots, u_r)$ of a graph $G$ are said to be \emph{$r$-bridged} if each $v_i$ is adjacent in $G$ to all $u_1, u_2, \ldots, u_{i}$, $i = 1,2, \ldots, r$, or, equivalently, if each $u_i$ is adjacent in $G$ to all $v_i, v_{i+1}, \ldots, v_r$, $i = 1,2, \ldots, r$.  We then also say that the two sequences form \emph{an $r$-bridge}, or just \emph{a bridge} if the value of $r$ is clear from the context.
\end{dfn}

The first ingredient of the decomposition consists of a number of cliques tied together by bridges to form a linear structure resembling a braid.

\begin{dfn}\label{bra}
For $t\ge1$, $\l\ge2$, and  $1\le r\le\l$, let $B(\l,r,t)$ be \emph{the braid  graph} consisting of $t$ vertex-disjoint $\l$-cliques $K_\l^{(1)},K_\l^{(2)},\ldots,K_\l^{(t)}$, with vertices ordered arbitrarily,  where for each $i=1,\dots,t-1$, the last $r$ vertices of $K_\ell^{(i)}$ and the first $r$ vertices of $K_\ell^{(i+1)}$ are  $r$-bridged. For any $s\geq 1$, we denote by $sB(\l,r,t)$, the union of $s$ vertex disjoint copies of $B(\l,r,t)$.
\end{dfn}
Note that  $B(\l,r,t)$ has $t\l$ vertices and $t\binom\l 2+(t-1)\binom{r+1}2$ edges. Also, for $r\in\{\ell-1,\ell\}$,
$B(\l,r,t)=P^r_{t\ell}$, while $B(\l,1,t)$ consists of $t$ cliques $K_\ell$ connected together by $t-1$ disjoint edges.

The second component of the decomposition involves the notion of the blow-up.

\begin{dfn}\label{blo}
For a graph $F=(V,E)$ with $V=\{v_1,\ldots,v_h\}$, \emph{the $(t_1,\ldots,t_h)$-blow-up of $F$}  is a graph $F(t_1,\dots,t_h)$ obtained from $F$ by replacing each vertex $v_i$ by a set $U_i$ of $t_i$ vertices and each edge $\{v_i,v_j\}$ by the complete bipartite graph $K_{t_i,t_j}$ on $U_i\cup U_j$.
If $t_1=\ldots=t_h=t$ then we call such a graph the $t$-blow-up of $F$ and denote it by $F(t)$.
\end{dfn}
Throughout we will use the convention that if a $k$-path $P^k_h$ has its vertex set listed as a sequence $(v_1,\dots,v_h)$, then we list the vertices of its blow-up $P^k_h(t)$ so that all vertices of $U_1$ precede (in any order) all vertices of $U_2$, which precede all vertices of $U_3$, etc.

We are now ready to describe the decomposition, or, in fact, an embedding of an $m$-path into the union of two edge-disjoint subgraphs.

\begin{prop}\label{m-path decomposition}
Let $k, t \geq 1$,  and $m = k\l + r$, with $1 \leq r \leq \l$. For any copy $P$ of $P^{k}_{(k+1)t}(\l)$, there exists a copy $B$ of $(k+1)B(\l,r,t)$ on $V(P)$, which is edge-disjoint from $P$, and such that one can find a copy of the $m$-path $P_{\ell (k+1)t}^{m}$ in the union of $P$ and $B$, whose vertices inherit the ordering of vertices of $P$, i.e.
\begin{equation}\label{P}
P_{\ell (k+1)t}^{m} \subseteq P^{k}_{(k+1)t}(\l) \dcup (k+1)B(\l,r,t).
\end{equation}
Moreover, for $t$ even and $C = C^{k}_{2k+2}(\l t/2)$, one can find a copy $P$ of $P^{k}_{(k+1)t}(\l)$ in $C$ and then a~copy $B$ of $(k+1)B(\l,r,t)$ on $V(P)$, which is edge-disjoint from $C$ and such that $C \cup B$ contains a~copy of the $m$-path $P_{\ell (k+1)t}^{m}$, whose vertices inherit the ordering of vertices of $P$, i.e.
\begin{equation}\label{C}
P_{\l (k+1)t}^{m} \subseteq C^{k}_{2k+2}(\l t/2) \dcup (k+1)B(\l,r,t).
\end{equation}
\end{prop}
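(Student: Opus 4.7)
The plan is to realise the decomposition explicitly by labelling the clusters of the blow-up, pinpointing which edges of $P^m_{\l(k+1)t}$ are not already present in $P^k_{(k+1)t}(\l)$, and recognising those missing edges as a union of $k+1$ braids. Write $V(P^k_{(k+1)t}(\l))=U_1\dcup\cdots\dcup U_{(k+1)t}$, with each $U_i=\{u_i^{(1)},\dots,u_i^{(\l)}\}$ placed consecutively so that $u_i^{(p)}$ occupies position $(i-1)\l+p$ in the linear order. Two vertices $u_i^{(p)}$ and $u_j^{(q)}$ with $i\le j$ are adjacent in $P^m_{\l(k+1)t}$ iff $(j-i)\l+q-p\le k\l+r$. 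A short case analysis on $j-i$ relative to $k$ shows that for $j-i\le k$ every pair is already joined in the blow-up, that for $j-i\ge k+2$ no pair meets the distance bound, and that the missing edges split into exactly two families: (a) the clique edges inside each $U_i$, and (c) the pairs $\{u_i^{(p)},u_{i+k+1}^{(q)}\}$ with $q+\l-r\le p$.

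For \eqref{P} I would group the clusters by residue modulo $k+1$: for $j=1,\dots,k+1$, let $\cJ_j$ be the sequence $(U_j,U_{j+k+1},\dots,U_{j+(t-1)(k+1)})$, and on its vertex set place a copy $B_j$ of $B(\l,r,t)$ whose $a$-th clique is $U_{j+(a-1)(k+1)}$, ordered by the fixed order on its vertices. Unwinding the $r$-bridge definition, the last $r$ vertices of $K_\l^{(a)}$ occupy positions $p=\l-r+s$ and the first $r$ vertices of $K_\l^{(a+1)}$ occupy positions $q=s'$, for $s,s'\in\{1,\dots,r\}$; the required adjacencies $s'\le s$ translate exactly into $q+\l-r\le p$, i.e.\ family~(c). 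Hence $B_1\dcup\cdots\dcup B_{k+1}$ supplies precisely families (a) and (c). It is automatically edge-disjoint from $P^k_{(k+1)t}(\l)$, because the latter has no edges inside any $U_i$ (independent class) and no edges between $U_i$ and $U_{i+k+1}$ (index distance $k+1>k$), so the two graphs together cover every edge of $P^m_{\l(k+1)t}$.

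For \eqref{C}, assume $t$ even, set $C=C^k_{2k+2}(\l t/2)$ with blow-up classes $U'_1,\dots,U'_{2k+2}$, and partition each $U'_i$ into $t/2$ blocks of size $\l$. I would define $U_a$ for $a=1,\dots,(k+1)t$ by walking $t/2$ times around the cycle, taking on the $\lceil a/(2k+2)\rceil$-th lap the corresponding block of $U'_{((a-1)\bmod(2k+2))+1}$; the $U_a$'s then form a disjoint partition of $V(C)$. Whenever $1\le|a-b|\le k$ the clusters $U_a,U_b$ lie in distinct classes whose cyclic distance in $C_{2k+2}$ equals $|a-b|\le k$, so they are completely joined in $C$, realising the required copy of $P^k_{(k+1)t}(\l)$ inside $C$. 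Reapplying the construction of the previous paragraph to these $U_a$'s produces a braid $B$ that is automatically edge-disjoint from $C$: its family (a) edges lie inside a single $U'_i$ (independent in $C$), while its family (c) edges connect $U'_{i_a}$ with $U'_{i_a+k+1\bmod(2k+2)}$, the unique diametrically opposite pair in $C_{2k+2}$ and hence the only non-edge of $C^k_{2k+2}$. I expect the main subtlety to be this lap-based indexing in \eqref{C}: once one arranges matters so that every gap-$(k+1)$ jump along the path lands on the cycle's non-edge, while each block of each $U'_i$ is used exactly once, the edge-disjointness and the embedding both come for free.
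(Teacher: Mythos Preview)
Your proposal is correct and follows essentially the same approach as the paper: both group the $\ell$-clusters by residue modulo $k+1$ to form the $k+1$ braids, and for \eqref{C} both wind the path around the cycle so that gap-$(k+1)$ jumps land on antipodal classes of $C_{2k+2}$. The only cosmetic difference is that the paper verifies the $m$-path property by checking each vertex has $m$ forward neighbours, whereas you classify the edges of $P^m_{\ell(k+1)t}$ by the index gap $j-i$ and match families~(a) and~(c) to the cliques and bridges of $B$; these are equivalent bookkeeping choices.
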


\begin{rem}
For $r\in \{\l-1,\l \}$, the embedding in (\ref{P}) is an actual decomposition, while for $1 \le r \le \l-2$, the embedding omits some of the edges of $P^k_{(k+1)t}(\l)$.
\end{rem}

\begin{rem}\label{1-1}
 Observe that for two paths $P$ and $P'$, there might be the same copy of $B$ satisfying the conditions of Proposition \ref{m-path decomposition}. However, this cannot happen if the paths have different vertex sets.
\end{rem}

\begin{figure}
\centering
\begin{tikzpicture}[scale=0.5]


\coordinate(v1) at (0,0);
\coordinate(v2) at (0.5,0);
\coordinate(v3) at (1,0);

\coordinate(v4) at (2.5,0);
\coordinate(v5) at (3,0);
\coordinate(v6) at (3.5,0);

\coordinate(v7) at (5,0);
\coordinate(v8) at (5.5,0);
\coordinate(v9) at (6,0);

\coordinate(v10) at (7.5,0);
\coordinate(v11) at (8,0);
\coordinate(v12) at (8.5,0);

\coordinate(v13) at (10,0);
\coordinate(v14) at (10.5,0);
\coordinate(v15) at (11,0);

\coordinate(v16) at (12.5,0);
\coordinate(v17) at (13,0);
\coordinate(v18) at (13.5,0);

\coordinate(v19) at (15,0);
\coordinate(v20) at (15.5,0);
\coordinate(v21) at (16,0);

\coordinate(v22) at (17.5,0);
\coordinate(v23) at (18,0);
\coordinate(v24) at (18.5,0);

\coordinate(v25) at (20,0);
\coordinate(v26) at (20.5,0);
\coordinate(v27) at (21,0);

\draw[fill=lightgray] (1,0.2) -- (21,0.2) -- (21,-0.2) -- (1,-0.2) -- (1,0.2);

\node (U1) at (0.5,-1) {$\seq{u}_1$};
\node (U2) at (3,-1) {$\seq{u}_2$};
\node (U3) at (5.5,-1) {$\seq{u}_3$};
\node (U4) at (8,-1) {$\seq{u}_4$};
\node (U5) at (10.5,-1) {$\seq{u}_5$};
\node (U6) at (13,-1) {$\seq{u}_6$};
\node (U7) at (15.5,-1) {$\seq{u}_7$};
\node (U8) at (18,-1) {$\seq{u}_8$};
\node (U9) at (20.5,-1) {$\seq{u}_9$};

\node (U1') at (2,-5.5) {$\seq{u}_1$};
\node (U2') at (4.5,-8.5) {$\seq{u}_2$};
\node (U3') at (7,-11.5) {$\seq{u}_3$};
\node (U4') at (9.5,-5.5) {$\seq{u}_4$};
\node (U5') at (12,-8.5) {$\seq{u}_5$};
\node (U6') at (14.5,-11.5) {$\seq{u}_6$};
\node (U7') at (17,-5.5) {$\seq{u}_7$};
\node (U8') at (19.5,-8.5) {$\seq{u}_8$};
\node (U9') at (22,-11.5) {$\seq{u}_9$};

\begin{scope}[shift={(3,-1.2)}]
    \draw[fill=lightgray] (30:2.6) -- (30:3) arc (30:150:3) -- (150:2.6) arc (150:30:2.6) -- cycle;
\end{scope}

\begin{scope}[shift={(5.5,-1.2)}]
    \draw[fill=lightgray] (30:2.6) -- (30:3) arc (30:150:3) -- (150:2.6) arc (150:30:2.6) -- cycle;
\end{scope}

\begin{scope}[shift={(8,-1.2)}]
    \draw[fill=lightgray] (30:2.6) -- (30:3) arc (30:150:3) -- (150:2.6) arc (150:30:2.6) -- cycle;
\end{scope}

\begin{scope}[shift={(10.5,-1.2)}]
    \draw[fill=lightgray] (30:2.6) -- (30:3) arc (30:150:3) -- (150:2.6) arc (150:30:2.6) -- cycle;
\end{scope}

\begin{scope}[shift={(13,-1.2)}]
    \draw[fill=lightgray] (30:2.6) -- (30:3) arc (30:150:3) -- (150:2.6) arc (150:30:2.6) -- cycle;
\end{scope}

\begin{scope}[shift={(15.5,-1.2)}]
    \draw[fill=lightgray] (30:2.6) -- (30:3) arc (30:150:3) -- (150:2.6) arc (150:30:2.6) -- cycle;
\end{scope}

\begin{scope}[shift={(18,-1.2)}]
    \draw[fill=lightgray] (30:2.6) -- (30:3) arc (30:150:3) -- (150:2.6) arc (150:30:2.6) -- cycle;
\end{scope}

\draw[fill=white] (-0.3,-0.3) -- (1.3,-0.3) -- (1.3,0.3) -- (-0.3,0.3) -- (-0.3,-0.3);

\begin{scope}[shift={(2.5,0)}]
    \draw[fill=white] (-0.3,-0.3) -- (1.3,-0.3) -- (1.3,0.3) -- (-0.3,0.3) -- (-0.3,-0.3);
\end{scope}

\begin{scope}[shift={(5,0)}]
    \draw[fill=white] (-0.3,-0.3) -- (1.3,-0.3) -- (1.3,0.3) -- (-0.3,0.3) -- (-0.3,-0.3);
\end{scope}

\begin{scope}[shift={(7.5,0)}]
    \draw[fill=white] (-0.3,-0.3) -- (1.3,-0.3) -- (1.3,0.3) -- (-0.3,0.3) -- (-0.3,-0.3);
\end{scope}

\begin{scope}[shift={(10,0)}]
    \draw[fill=white] (-0.3,-0.3) -- (1.3,-0.3) -- (1.3,0.3) -- (-0.3,0.3) -- (-0.3,-0.3);
\end{scope}

\begin{scope}[shift={(12.5,0)}]
    \draw[fill=white] (-0.3,-0.3) -- (1.3,-0.3) -- (1.3,0.3) -- (-0.3,0.3) -- (-0.3,-0.3);
\end{scope}

\begin{scope}[shift={(15,0)}]
    \draw[fill=white] (-0.3,-0.3) -- (1.3,-0.3) -- (1.3,0.3) -- (-0.3,0.3) -- (-0.3,-0.3);
\end{scope}

\begin{scope}[shift={(17.5,0)}]
    \draw[fill=white] (-0.3,-0.3) -- (1.3,-0.3) -- (1.3,0.3) -- (-0.3,0.3) -- (-0.3,-0.3);
\end{scope}

\begin{scope}[shift={(20,0)}]
    \draw[fill=white] (-0.3,-0.3) -- (1.3,-0.3) -- (1.3,0.3) -- (-0.3,0.3) -- (-0.3,-0.3);
\end{scope}

\begin{pgfonlayer}{front}
		\foreach \i in {v1,v2,v3,v4,v5,v6,v7,v8,v9,v10,v11,v12,v13,v14,v15,v16,v17,v18,v19,v20,v21,v22,v23,v24,v25,v26,v27}
			 \fill  (\i) circle (3.2pt);		
\end{pgfonlayer}	

\coordinate(w1) at (0,-5);
\coordinate(w2) at (0.5,-5);
\coordinate(w3) at (1,-5);

\coordinate(w4) at (2.5,-8);
\coordinate(w5) at (3,-8);
\coordinate(w6) at (3.5,-8);

\coordinate(w7) at (5,-11);
\coordinate(w8) at (5.5,-11);
\coordinate(w9) at (6,-11);

\coordinate(w10) at (7.5,-5);
\coordinate(w11) at (8,-5);
\coordinate(w12) at (8.5,-5);

\coordinate(w13) at (10,-8);
\coordinate(w14) at (10.5,-8);
\coordinate(w15) at (11,-8);

\coordinate(w16) at (12.5,-11);
\coordinate(w17) at (13,-11);
\coordinate(w18) at (13.5,-11);

\coordinate(w19) at (15,-5);
\coordinate(w20) at (15.5,-5);
\coordinate(w21) at (16,-5);

\coordinate(w22) at (17.5,-8);
\coordinate(w23) at (18,-8);
\coordinate(w24) at (18.5,-8);

\coordinate(w25) at (20,-11);
\coordinate(w26) at (20.5,-11);
\coordinate(w27) at (21,-11);

\begin{pgfonlayer}{front}
		\foreach \i in {w1,w2,w3,w4,w5,w6,w7,w8,w9,w10,w11,w12,w13,w14,w15,w16,w17,w18,w19,w20,w21,w22,w23,w24,w25,w26,w27}
			 \fill  (\i) circle (3.2pt);		
\end{pgfonlayer}

\begin{scope}[shift={(0,-5)}]
\draw[fill=white] (-0.3,-0.3) -- (1.3,-0.3) -- (1.3,0.3) -- (-0.3,0.3) -- (-0.3,-0.3);
\draw (0,0) -- (1,0) edge[out=120,in=60] (0,0);
\begin{scope}[shift={(7.5,0)}]
    \draw[fill=white] (-0.3,-0.3) -- (1.3,-0.3) -- (1.3,0.3) -- (-0.3,0.3) -- (-0.3,-0.3);
    \draw (0,0) -- (1,0) edge[out=120,in=60] (0,0);
    \draw (0.5,0) edge[out=130,in=50] (-6.5,0);
    \draw (-6.5,0) edge[out=40,in=140] (0,0);
    \draw (0,0) edge[out=130,in=50] (-7,0);
\end{scope}
\begin{scope}[shift={(15,0)}]
    \draw[fill=white] (-0.3,-0.3) -- (1.3,-0.3) -- (1.3,0.3) -- (-0.3,0.3) -- (-0.3,-0.3);
    \draw (0,0) -- (1,0) edge[out=120,in=60] (0,0);
    \draw (0.5,0) edge[out=130,in=50] (-6.5,0);
    \draw (-6.5,0) edge[out=40,in=140] (0,0);
    \draw (0,0) edge[out=130,in=50] (-7,0);
\end{scope}
\end{scope}

\begin{scope}[shift={(2.5,-8)}]
\draw[fill=white] (-0.3,-0.3) -- (1.3,-0.3) -- (1.3,0.3) -- (-0.3,0.3) -- (-0.3,-0.3);
\draw (0,0) -- (1,0) edge[out=120,in=60] (0,0);
\begin{scope}[shift={(7.5,0)}]
    \draw[fill=white] (-0.3,-0.3) -- (1.3,-0.3) -- (1.3,0.3) -- (-0.3,0.3) -- (-0.3,-0.3);
    \draw (0,0) -- (1,0) edge[out=120,in=60] (0,0);
    \draw (0.5,0) edge[out=130,in=50] (-6.5,0);
    \draw (-6.5,0) edge[out=40,in=140] (0,0);
    \draw (0,0) edge[out=130,in=50] (-7,0);
\end{scope}
\begin{scope}[shift={(15,0)}]
    \draw[fill=white] (-0.3,-0.3) -- (1.3,-0.3) -- (1.3,0.3) -- (-0.3,0.3) -- (-0.3,-0.3);
    \draw (0,0) -- (1,0) edge[out=120,in=60] (0,0);
    \draw (0.5,0) edge[out=130,in=50] (-6.5,0);
    \draw (-6.5,0) edge[out=40,in=140] (0,0);
    \draw (0,0) edge[out=130,in=50] (-7,0);
\end{scope}
\end{scope}

\begin{scope}[shift={(5,-11)}]
\draw[fill=white] (-0.3,-0.3) -- (1.3,-0.3) -- (1.3,0.3) -- (-0.3,0.3) -- (-0.3,-0.3);
\draw (0,0) -- (1,0) edge[out=120,in=60] (0,0);
\begin{scope}[shift={(7.5,0)}]
    \draw[fill=white] (-0.3,-0.3) -- (1.3,-0.3) -- (1.3,0.3) -- (-0.3,0.3) -- (-0.3,-0.3);
    \draw (0,0) -- (1,0) edge[out=120,in=60] (0,0);
    \draw (0.5,0) edge[out=130,in=50] (-6.5,0);
    \draw (-6.5,0) edge[out=40,in=140] (0,0);
    \draw (0,0) edge[out=130,in=50] (-7,0);
\end{scope}
\begin{scope}[shift={(15,0)}]
    \draw[fill=white] (-0.3,-0.3) -- (1.3,-0.3) -- (1.3,0.3) -- (-0.3,0.3) -- (-0.3,-0.3);
    \draw (0,0) -- (1,0) edge[out=120,in=60] (0,0);
    \draw (0.5,0) edge[out=130,in=50] (-6.5,0);
    \draw (-6.5,0) edge[out=40,in=140] (0,0);
    \draw (0,0) edge[out=130,in=50] (-7,0);
\end{scope}
\end{scope}

\end{tikzpicture}

\caption{For $k=2$, $\l=3$, $r=2$, $t=3$, $m=8$, an $8$-path on 27 vertices $P_{27}^8$ can be embedded into the union of a $3$-blow-up of a $2$-path $P_9^2(3)$ and three copies of the braid graph $B(3,2,3)$.}\label{fig:completion}

\end{figure}
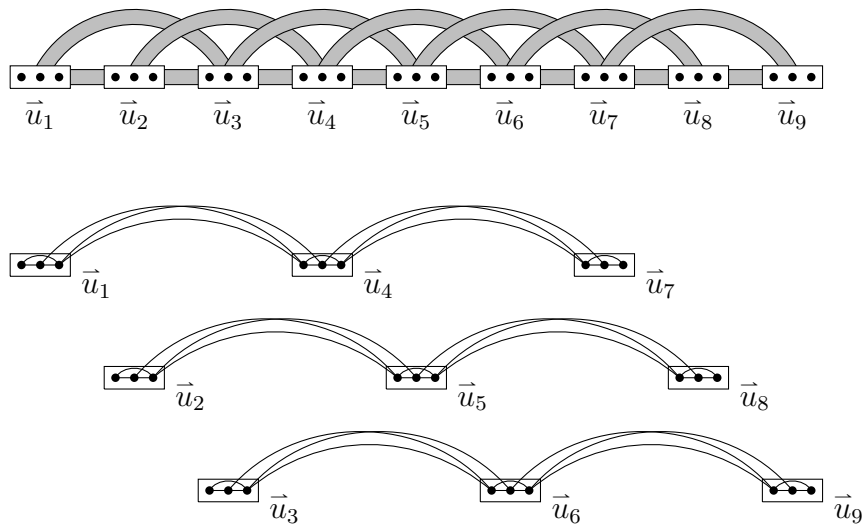

\begin{proof}[Proof of Proposition~\ref{m-path decomposition}]
Let $\seq{v} = (v_1, v_2, \ldots, v_{\l (k+1)t})$ be the vertices of $P=P^{k}_{(k+1)t}(\l)$. Consider the decomposition of $\seq{v}$ of the form $\seq{v} = (\seq{u}_1,\seq{u}_2,\ldots,\seq{u}_{(k+1)t})$, where each $\seq{u_i}$, $i=1,2,\ldots,(k+1)t$, is a segment of $\seq{v}$ of length $\ell$. With a small abuse of notation we will treat $\seq{u}_i$ either as a sequence, or as a set, depending on the context.

Now, for each $i = 1, 2 \ldots, k+1$, consider a subsequence $\seq{v}^{(i)} = (\seq{u}_i,\seq{u}_{i+(k+1)},\ldots,\seq{u}_{i+(t-1)(k+1)})$ of $\seq{v}$. Let $B_i$ be the copy of $B(\ell,r,t)$ on $\seq{v}^{(i)}$ in that ordering. In particular, each segment $\seq{u}_j$ induces a copy of $K_{\l}$ in $B_i$ and any two segments $\seq{u}_j$ and $\seq{u}_{j+(k+1)}$ in $B_i$ are $r$-bridged. Note also that the vertices of $\seq{v}^{(i)}$ form an independent set in $P$, hence the graph $B_i$ is edge-disjoint from $P$. Now put $B = B_1 \cup \ldots \cup B_{k+1}$.
Since for any $1 \leq i < i' \leq k+1$, $\seq{v}^{(i)}$ and $\seq{v}^{(i')}$ are disjoint, the graphs $B_i$ and $B_{i'}$ are vertex-disjoint and $B$ is a copy of $(k+1)B(\l,r,t)$, which is edge-disjoint from $P$.

In order to finish the proof of (\ref{P}) it is enough to show that any vertex in $\seq{v}$ is connected in $P \dcup B$ with $m$ consecutive vertices. To this end, take a vertex $v_{i\l + j}$, where $0 \le i \le(k+1)t - 1$ and $1 \leq j \leq \l$, and note that $v_{i\l + j} \in \seq{u}_{i+1}$. Then $v_{i\l + j}$ is connected in $B$ with $\l - j$ vertices $v_{i\l + j + 1},\ldots, v_{i\l +\l}\in \seq{u}_{i+1}$, as $\seq{u}_{i+1}$ induces in $B$ a clique $K_{\l}$. Moreover, since in $P$ the sets $\seq{u}_{i+1}, \seq{u}_{i+2},\ldots, \seq{u}_{i+k+1}$ induce a complete $(k+1)$-partite graph, $v_{i\l+j}\in\seq{u}_{i+1}$ is connected in $P$ with $k\l$ vertices from $\seq{u}_{i+2},\ldots, \seq{u}_{i+k+1}$, that is with $v_{(i+1)\l + 1}, v_{(i+1)\l + 2},\ldots, v_{(i+k)\l + \l}$. If $\l - j \ge r$, then the above two groups of vertices give us $\l - j + k\l \ge m$ consecutive neigbours. Otherwise, for the last $m - k\l - (\l - j)$ vertices, the connections are given by the edges of the $r$-bridge between $\seq{u}_{i+1}$ and $\seq{u}_{i+k+2}$ in $B$. For an illustration of (\ref{P}) see Fig.\,\ref{fig:completion}.




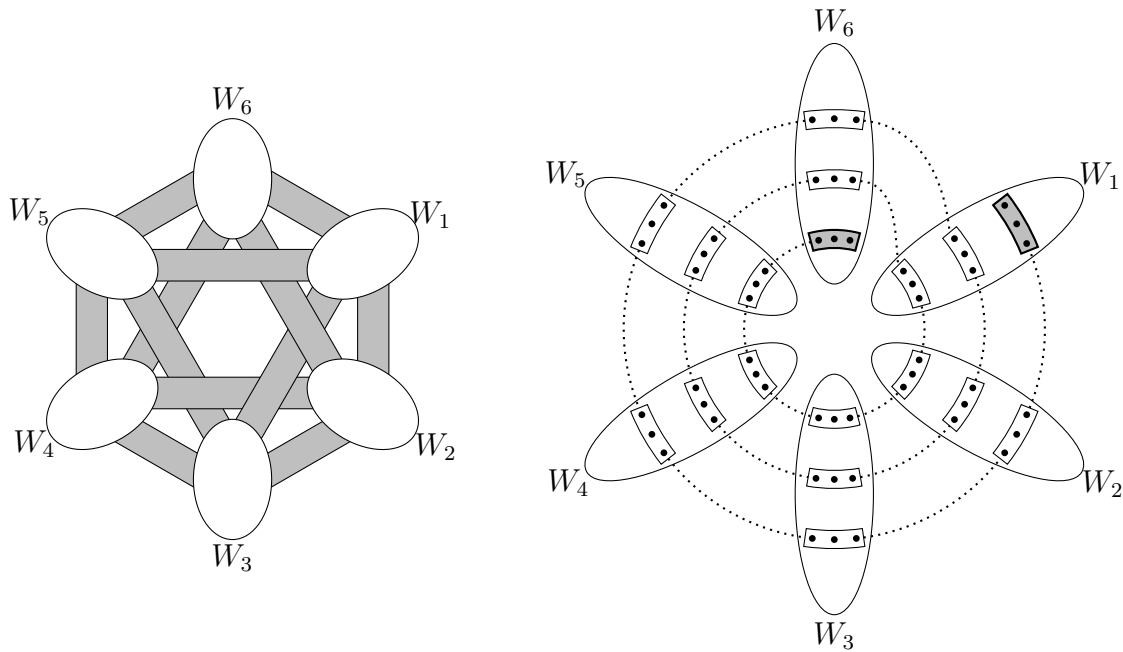
\begin{figure}
\centering
\begin{tikzpicture}[scale=0.4]

\coordinate(v1) at (84:7cm);
\coordinate(v2) at (90:7cm);
\coordinate(v3) at (96:7cm);

\draw (82:6.7cm) -- (82:7.3cm)
arc (82:98:7.3cm) -- (98:6.7cm)
arc (98:82:6.7cm) -- cycle;

\coordinate(v4) at (144:7cm);
\coordinate(v5) at (150:7cm);
\coordinate(v6) at (156:7cm);

\draw[rotate=60] (82:6.7cm) -- (82:7.3cm)
arc (82:98:7.3cm) -- (98:6.7cm)
arc (98:82:6.7cm) -- cycle;

\coordinate(v7) at (204:7cm);
\coordinate(v8) at (210:7cm);
\coordinate(v9) at (216:7cm);

\draw[rotate=120] (82:6.7cm) -- (82:7.3cm)
arc (82:98:7.3cm) -- (98:6.7cm)
arc (98:82:6.7cm) -- cycle;

\coordinate(v10) at (264:7cm);
\coordinate(v11) at (270:7cm);
\coordinate(v12) at (276:7cm);

\draw[rotate=180] (82:6.7cm) -- (82:7.3cm)
arc (82:98:7.3cm) -- (98:6.7cm)
arc (98:82:6.7cm) -- cycle;

\coordinate(v13) at (324:7cm);
\coordinate(v14) at (330:7cm);
\coordinate(v15) at (336:7cm);

\draw[rotate=240] (82:6.7cm) -- (82:7.3cm)
arc (82:98:7.3cm) -- (98:6.7cm)
arc (98:82:6.7cm) -- cycle;

\coordinate(v16) at (24:7cm);
\coordinate(v17) at (30:7cm);
\coordinate(v18) at (36:7cm);

\draw[rotate=300,thick,fill=lightgray] (82:6.7cm) -- (82:7.3cm)
arc (82:98:7.3cm) -- (98:6.7cm)
arc (98:82:6.7cm) -- cycle;

\coordinate(v19) at (83:5cm);
\coordinate(v20) at (90:5cm);
\coordinate(v21) at (97:5cm);

\draw (80:4.7cm) -- (80:5.3cm)
arc (80:100:5.3cm) -- (100:4.7cm)
arc (100:80:4.7cm) -- cycle;

\coordinate(v22) at (143:5cm);
\coordinate(v23) at (150:5cm);
\coordinate(v24) at (157:5cm);

\draw[rotate=60] (80:4.7cm) -- (80:5.3cm)
arc (80:100:5.3cm) -- (100:4.7cm)
arc (100:80:4.7cm) -- cycle;

\coordinate(v25) at (203:5cm);
\coordinate(v26) at (210:5cm);
\coordinate(v27) at (217:5cm);

\draw[rotate=120] (80:4.7cm) -- (80:5.3cm)
arc (80:100:5.3cm) -- (100:4.7cm)
arc (100:80:4.7cm) -- cycle;

\coordinate(v28) at (263:5cm);
\coordinate(v29) at (270:5cm);
\coordinate(v30) at (277:5cm);

\draw[rotate=180] (80:4.7cm) -- (80:5.3cm)
arc (80:100:5.3cm) -- (100:4.7cm)
arc (100:80:4.7cm) -- cycle;

\coordinate(v31) at (323:5cm);
\coordinate(v32) at (330:5cm);
\coordinate(v33) at (337:5cm);

\draw[rotate=240] (80:4.7cm) -- (80:5.3cm)
arc (80:100:5.3cm) -- (100:4.7cm)
arc (100:80:4.7cm) -- cycle;

\coordinate(v34) at (23:5cm);
\coordinate(v35) at (30:5cm);
\coordinate(v36) at (37:5cm);

\draw[rotate=300] (80:4.7cm) -- (80:5.3cm)
arc (80:100:5.3cm) -- (100:4.7cm)
arc (100:80:4.7cm) -- cycle;

\coordinate(v37) at (80:3cm);
\coordinate(v38) at (90:3cm);
\coordinate(v39) at (100:3cm);

\draw[thick,fill=lightgray] (75:2.7cm) -- (75:3.3cm)
arc (75:105:3.3cm) -- (105:2.7cm)
arc (105:75:2.7cm) -- cycle;

\coordinate(v40) at (140:3cm);
\coordinate(v41) at (150:3cm);
\coordinate(v42) at (160:3cm);

\draw[rotate=60] (75:2.7cm) -- (75:3.3cm)
arc (75:105:3.3cm) -- (105:2.7cm)
arc (105:75:2.7cm) -- cycle;

\coordinate(v43) at (200:3cm);
\coordinate(v44) at (210:3cm);
\coordinate(v45) at (220:3cm);

\draw[rotate=120] (75:2.7cm) -- (75:3.3cm)
arc (75:105:3.3cm) -- (105:2.7cm)
arc (105:75:2.7cm) -- cycle;

\coordinate(v46) at (260:3cm);
\coordinate(v47) at (270:3cm);
\coordinate(v48) at (280:3cm);

\draw[rotate=180] (75:2.7cm) -- (75:3.3cm)
arc (75:105:3.3cm) -- (105:2.7cm)
arc (105:75:2.7cm) -- cycle;

\coordinate(v49) at (320:3cm);
\coordinate(v50) at (330:3cm);
\coordinate(v51) at (340:3cm);

\draw[rotate=240] (75:2.7cm) -- (75:3.3cm)
arc (75:105:3.3cm) -- (105:2.7cm)
arc (105:75:2.7cm) -- cycle;

\coordinate(v52) at (20:3cm);
\coordinate(v53) at (30:3cm);
\coordinate(v54) at (40:3cm);

\draw[rotate=300] (75:2.7cm) -- (75:3.3cm)
arc (75:105:3.3cm) -- (105:2.7cm)
arc (105:75:2.7cm) -- cycle;

\begin{pgfonlayer}{front}
		\foreach \i in {v1,v2,v3,v4,v5,v6,v7,v8,v9,v10,v11,v12,v13,v14,v15,v16,v17,v18,
					v19,v20,v21,v22,v23,v24,v25,v26,v27,v28,v29,v30,v31,v32,v33,v34,v35,v36,
					v37,v38,v39,v40,v41,v42,v43,v44,v45,v46,v47,v48,v49,v50,v51,v52,v53,v54} \fill (\i) circle (3.2pt);		\end{pgfonlayer}	
					
\draw (90:5.5cm) ellipse (1.3cm and 4cm);
\draw [rotate=60] (90:5.5cm) ellipse (1.3cm and 4cm);
\draw [rotate=120] (90:5.5cm) ellipse (1.3cm and 4cm);
\draw [rotate=180] (90:5.5cm) ellipse (1.3cm and 4cm);
\draw [rotate=240] (90:5.5cm) ellipse (1.3cm and 4cm);
\draw [rotate=300] (90:5.5cm) ellipse (1.3cm and 4cm);

\draw[thick,dotted] (98:7cm) arc (98:142:7cm);
\draw[rotate=60,thick,dotted] (98:7cm) arc (98:142:7cm);
\draw[rotate=120,thick,dotted] (98:7cm) arc (98:142:7cm);
\draw[rotate=180,thick,dotted] (98:7cm) arc (98:142:7cm);
\draw[rotate=240,thick,dotted] (98:7cm) arc (98:142:7cm);

\draw[thick,dotted] (100:5cm) arc (100:140:5cm);
\draw[rotate=60,thick,dotted] (100:5cm) arc (100:140:5cm);
\draw[rotate=120,thick,dotted] (100:5cm) arc (100:140:5cm);
\draw[rotate=180,thick,dotted] (100:5cm) arc (100:140:5cm);
\draw[rotate=240,thick,dotted] (100:5cm) arc (100:140:5cm);

\draw[thick,dotted] (105:3cm) arc (105:135:3cm);
\draw[rotate=60,thick,dotted] (105:3cm) arc (105:135:3cm);
\draw[rotate=120,thick,dotted] (105:3cm) arc (105:135:3cm);
\draw[rotate=180,thick,dotted] (105:3cm) arc (105:135:3cm);
\draw[rotate=240,thick,dotted] (105:3cm) arc (105:135:3cm);

\draw[thick,dotted] (82:7cm) edge[out=355,in=95] (40:5cm);
\draw[thick,dotted] (80:5cm) edge[out=355,in=95] (45:3cm);

\node (V_1) at (30:10.2cm) {$W_1$};
\node (V_2) at (330:10.3cm) {$W_2$};
\node (V_3) at (270:10.2cm) {$W_3$};
\node (V_4) at (210:10.2cm) {$W_4$};
\node (V_5) at (150:10.3cm) {$W_5$};
\node (V_6) at (90:10.2cm) {$W_6$};

\begin{scope}[shift={(-20cm,0)}]

\draw[fill=lightgray] (90:6cm) -- (150:6cm) -- (150:4.8cm) -- (90:4.8cm) -- (90:6cm);
\draw[rotate=60,fill=lightgray] (90:6cm) -- (150:6cm) -- (150:4.8cm) -- (90:4.8cm) -- (90:6cm);
\draw[rotate=120,fill=lightgray] (90:6cm) -- (150:6cm) -- (150:4.8cm) -- (90:4.8cm) -- (90:6cm);
\draw[rotate=180,fill=lightgray] (90:6cm) -- (150:6cm) -- (150:4.8cm) -- (90:4.8cm) -- (90:6cm);
\draw[rotate=240,fill=lightgray] (90:6cm) -- (150:6cm) -- (150:4.8cm) -- (90:4.8cm) -- (90:6cm);
\draw[rotate=300,fill=lightgray] (90:6cm) -- (150:6cm) -- (150:4.8cm) -- (90:4.8cm) -- (90:6cm);

\draw[fill=lightgray] (90:5.3cm) -- (210:5.3cm) -- (210:3.2cm) -- (90:3.2cm) -- (90:5.3cm);
\draw[rotate=60,fill=lightgray] (90:5.3cm) -- (210:5.3cm) -- (210:3.2cm) -- (90:3.2cm) -- (90:5.3cm);
\draw[rotate=120,fill=lightgray] (90:5.3cm) -- (210:5.3cm) -- (210:3.2cm) -- (90:3.2cm) -- (90:5.3cm);
\draw[rotate=180,fill=lightgray] (90:5.3cm) -- (210:5.3cm) -- (210:3.2cm) -- (90:3.2cm) -- (90:5.3cm);
\draw[rotate=240,fill=lightgray] (90:5.3cm) -- (210:5.3cm) -- (210:3.2cm) -- (90:3.2cm) -- (90:5.3cm);
\draw[rotate=300,fill=lightgray] (90:5.3cm) -- (210:5.3cm) -- (210:3.2cm) -- (90:3.2cm) -- (90:5.3cm);

\draw[fill=white] (90:5cm) ellipse (1.3cm and 2cm);
\draw[fill=white] [rotate=60] (90:5cm) ellipse (1.3cm and 2cm);
\draw[fill=white] [rotate=120] (90:5cm) ellipse (1.3cm and 2cm);
\draw[fill=white] [rotate=180] (90:5cm) ellipse (1.3cm and 2cm);
\draw[fill=white] [rotate=240] (90:5cm) ellipse (1.3cm and 2cm);
\draw[fill=white] [rotate=300] (90:5cm) ellipse (1.3cm and 2cm);

\node (W_1) at (30:7.6cm) {$W_1$};
\node (W_2) at (330:7.8cm) {$W_2$};
\node (W_3) at (270:7.6cm) {$W_3$};
\node (W_4) at (210:7.6cm) {$W_4$};
\node (W_5) at (150:7.8cm) {$W_5$};
\node (W_6) at (90:7.6cm) {$W_6$};

\end{scope}

\end{tikzpicture}

\caption{For $k=2, \l=3, r=1, t=6, m=7$, the blow-up $C_6^2(9)$ (on the left) contains a copy of a path $P_{18}^2(3)$ (on the right) which is rolled up around the cycle. }
\end{figure}\label{fig:completion-cycle}

To prove (\ref{C}), let $W_1, W_2, \ldots, W_{2k+2}$ be the partition classes of $C$. Split each $W_i$, $i=1,2,\ldots,2k+2$, into $t/2$ subsets of size $\l$ and order them arbitrarily into segments $\seq{u}_i, \seq{u}_{i+2k+2},\ldots, \seq{u}_{i+(t/2-1)(2k+2)}$. Put $\seq{v} = (\seq{u}_1, \seq{u}_2,\ldots,\seq{u}_{t(k+1)})$ and note that this gives us the ordering of the desired graph $P$. Indeed, each of the segments $\seq{u}_i$ is an independent set in $C$ of size $\l$. Moreover, for any two segments $\seq{u}_i$ and $\seq{u}_j$, with $1\le i < j \le t(k+1)$, $j-i \leq k$, $\seq{u}_i$ and $\seq{u}_j$ induce in $C$ a complete bipartite graph. As for the rest of the proof, one can repeat the construction of $B$ as in the case (\ref{P}). Note that for each $i = 1, 2 \ldots, k+1$, the subsequence $\seq{v}^{(i)} = (\seq{u}_i,\seq{u}_{i+(k+1)},\ldots,\seq{u}_{i+(t-1)(k+1)})$ forms an independent set in $C$, since it contains only the vertices of ``antipodal'' partition sets of $C$, i.e. of $W_i$ and $W_{i+k+1}$. Thus, $B$ and $C$ are edge-disjoint and the rest of the proof goes along the same line as the proof of (\ref{P}). For an illustration of inclusion (\ref{C}) see Fig.\,\ref{fig:completion-cycle} and Fig.\,\ref{fig:completion-cycle-braid}.
\end{proof}

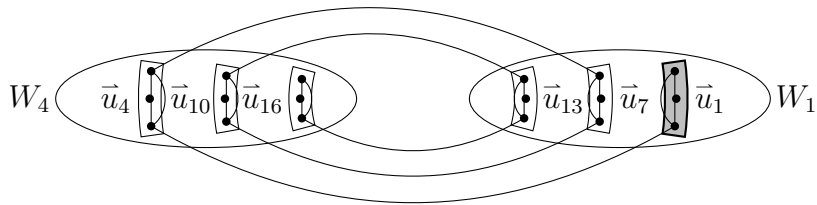
\begin{figure}
\centering
\begin{tikzpicture}[scale=0.5]

\coordinate(v7) at (174:7cm);
\coordinate(v8) at (180:7cm);
\coordinate(v9) at (186:7cm);
\draw[rotate=90] (82:6.7cm) -- (82:7.3cm)
arc (82:98:7.3cm) -- (98:6.7cm)
arc (98:82:6.7cm) -- cycle;

\coordinate(v16) at (354:7cm);
\coordinate(v17) at (0:7cm);
\coordinate(v18) at (6:7cm);

\draw[rotate=270,thick,fill=lightgray] (82:6.7cm) -- (82:7.3cm)
arc (82:98:7.3cm) -- (98:6.7cm)
arc (98:82:6.7cm) -- cycle;

\coordinate(v25) at (173:5cm);
\coordinate(v26) at (180:5cm);
\coordinate(v27) at (187:5cm);

\draw[rotate=90] (80:4.7cm) -- (80:5.3cm)
arc (80:100:5.3cm) -- (100:4.7cm)
arc (100:80:4.7cm) -- cycle;

\coordinate(v34) at (353:5cm);
\coordinate(v35) at (0:5cm);
\coordinate(v36) at (7:5cm);

\draw[rotate=270] (80:4.7cm) -- (80:5.3cm)
arc (80:100:5.3cm) -- (100:4.7cm)
arc (100:80:4.7cm) -- cycle;

\coordinate(v43) at (170:3cm);
\coordinate(v44) at (180:3cm);
\coordinate(v45) at (190:3cm);

\draw[rotate=90] (75:2.7cm) -- (75:3.3cm)
arc (75:105:3.3cm) -- (105:2.7cm)
arc (105:75:2.7cm) -- cycle;

\coordinate(v52) at (350:3cm);
\coordinate(v53) at (0:3cm);
\coordinate(v54) at (10:3cm);

\draw[rotate=270] (75:2.7cm) -- (75:3.3cm)
arc (75:105:3.3cm) -- (105:2.7cm)
arc (105:75:2.7cm) -- cycle;

\begin{pgfonlayer}{front}
		\foreach \i in {v7,v8,v9,v16,v17,v18,v25,v26,v27,v34,v35,v36,v43,v44,v45,v52,v53,v54} \fill (\i) circle (3.2pt);		\end{pgfonlayer}

\draw (0:5.5cm) ellipse (4cm and 1.3cm);
\draw [rotate=180] (0:5.5cm) ellipse (4cm and 1.3cm);

\node (V_1) at (0:10.2cm) {$W_1$};
\node (V_4) at (180:10.2cm) {$W_4$};

\draw (v7) edge[out=330,in=30] (v9);
\draw (v7) -- (v9);
\draw (v25) edge[out=330,in=30] (v27);
\draw (v25) -- (v27);
\draw (v43) edge[out=330,in=30] (v45);
\draw (v43) -- (v45);

\draw (v16) edge[out=150,in=210] (v18);
\draw (v16) -- (v18);
\draw (v34) edge[out=150,in=210] (v36);
\draw (v34) -- (v36);
\draw (v52) edge[out=150,in=210] (v54);
\draw (v52) -- (v54);

\draw (v16) edge[out=210,in=330] (v9);
\draw (v34) edge[out=210,in=330] (v27);
\draw (v52) edge[out=210,in=330] (v45);
\draw (v7) edge[out=30,in=150] (v36);
\draw (v25) edge[out=30,in=150] (v54);

\node (U_1) at (7.9cm,0) {$\seq{u}_1$};
\node (U_7) at (5.9cm,0) {$\seq{u}_7$};
\node (U_13) at (4cm,0) {$\seq{u}_{13}$};
\node (U_4) at (-7.9cm,0) {$\seq{u}_4$};
\node (U_10) at (-5.9cm,0) {$\seq{u}_{10}$};
\node (U_16) at (-4cm,0) {$\seq{u}_{16}$};

\end{tikzpicture}

\caption{Between any two ``antipodal'' partition classes of $C_6^2(9)$ there is a copy of the braid graph $B(3,1,6)$.}\label{fig:completion-cycle-braid}

\end{figure}

\subsection{An application of Janson's Inequality} Here we apply  Theorem \ref{Janson2} to  the graph $(k+1)B(\l,r,t)$ defined in the  previous subsection. Recall that functions $\Psi_G$ and $\Phi_G$ are defined in Section \ref{rg}.

\begin{prop}\label{Janson} Let $\tau>0$, $\l\ge r(r+1)\ge 2$ and $p\ge Cn^{-2/\l}$, where $C\ge 1$.
Further, let $B=(k+1)B(\l,r,t)$, $F$ be a subgraph of $B$ containing $K_\ell$, and $\cF$ be a family of at least $\tau n^{v_F}$ copies of $F$ in $K_n$.
Let $X$ be the number of copies of $F$ belonging to $\cF$ which are present in $G(n,p)$.
There exists a constant $c=c_{F}$ such that
\[
\PP(X \le\tau\Psi_F/2)\le \exp\{-\tau^2cCn\}.
\]
\end{prop}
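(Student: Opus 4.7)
The plan is to derive the bound as a direct corollary of Theorem~\ref{Janson2}. Given its conclusion, it suffices to prove the lower bound $\Phi_F \ge Cn$; once that is in hand,
\[
\PP\bigl(X \le \tau \Psi_F/2\bigr) \le \exp\{-\tau^2 4^{-e_F}\Phi_F/8\} \le \exp\{-\tau^2 c_F C n\},
\]
with $c_F = 4^{-e_F}/8$.

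Since $\Phi_F = \min_{F' \subseteq F,\, e_{F'}>0} \Psi_{F'}$, the heart of the argument is the ``balance'' inequality
\[
v_{F'} - \frac{2 e_{F'}}{\ell} \ge 1 \qquad \text{for every } F' \subseteq F \text{ with } e_{F'}\ge 1.
\]
Indeed, combined with $p \ge Cn^{-2/\ell}$ and $C \ge 1$, this immediately gives $\Psi_{F'} \ge C^{e_{F'}} n^{v_{F'} - 2 e_{F'}/\ell} \ge Cn$ (using $e_{F'}\ge 1$ and $C\ge 1$), and hence $\Phi_F \ge Cn$.

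To prove the balance inequality I would first reduce, by additivity of $v_{F'} - 2 e_{F'}/\ell$ across the connected components of $F'$, to the case that $F'$ is connected and has no isolated vertices. Since $B = (k+1)B(\ell,r,t)$ is a disjoint union, $F'$ then lies inside a single copy of $B(\ell,r,t)$, and by connectedness it occupies a consecutive range of cliques $K_\ell^{(a)}, \ldots, K_\ell^{(b)}$. Writing $w_i = |V(F') \cap V(K_\ell^{(i)})| \in [1,\ell]$ and recalling that $F'$ contains at most $\binom{w_i}{2}$ edges inside clique $i$ and at most $\binom{r+1}{2} = r(r+1)/2$ edges across each of the $b-a$ bridges, the elementary identity
\[
w_i - \frac{2\binom{w_i}{2}}{\ell} = \frac{w_i(\ell - w_i + 1)}{\ell} \ge 1 \qquad (1\le w_i \le \ell)
\]
accounts for the edges inside each clique, while the bridges cost at most $(b-a)r(r+1)/\ell$ in total. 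Summing,
\[
v_{F'} - \frac{2e_{F'}}{\ell} \ge (b-a+1) - \frac{(b-a)\,r(r+1)}{\ell} = 1 + (b-a)\cdot\frac{\ell - r(r+1)}{\ell} \ge 1,
\]
the last step being where the hypothesis $\ell \ge r(r+1)$ enters.

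The main obstacle is precisely this balance inequality: it is tight on long sub-braids (large $b-a$), and the hypothesis $\ell \ge r(r+1)$ is exactly what is needed to push every such sub-braid safely above the threshold.
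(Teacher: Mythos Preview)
Your proof is correct and follows essentially the same approach as the paper: both reduce to Theorem~\ref{Janson2} via the lower bound $\Phi_F\ge Cn$, which in turn is proved by bounding the edges of a connected subgraph inside a single braid by $\sum\binom{w_i}{2}$ (intra-clique) plus $(b-a)\binom{r+1}{2}$ (bridges), and then invoking $\ell\ge r(r+1)$. The paper phrases the key inequality as $m_B=d_{K_\ell}=\ell/2$ (equivalently $e_{F'}\le\tfrac{\ell}{2}(v_{F'}-1)$) and passes through $\Phi_F\ge\Phi_B$, whereas you state it directly as $v_{F'}-2e_{F'}/\ell\ge 1$; the computations are the same and yield the same constant $c_F=4^{-e_F}/8$.
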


\proof[Proof of Proposition \ref{Janson}]
We are going to show that $\Phi_F\ge\Phi_B\ge Cn$, where the first inequality is trivial. This, in view of Theorem \ref{Janson2}, implies Proposition \ref{Janson} with $c=(8 \cdot 4^{e_F})^{-1}$. We begin with a purely structural result.

Given a graph $G$ with $e_G\ge2$, let $d_G=\tfrac{e_G}{v_G-1}$ and set $d_{K_1}=0$. Then, define
\[
m_G=\max_{H\subseteq G}d_H.
\]
We claim that under the assumption $\l\ge r(r+1)$,
\begin{equation}\label{md}
m_B=d_{K_\l}=\frac\l2.
\end{equation}
To prove (\ref{md}), let $B'$ have the largest number of vertices among all subgraphs of $B$ which achieve the maximum in the definition of $m_B$. It is easy to check that $B'$ is connected and thus $B'\subseteq B(\l,r,t)$. Indeed, in general, if $G_1$ and $G_2$ are two vertex-disjoint graphs, then
\[
d_{G_1\cup G_2}=\frac{e_{G_1}+e_{G_2}}{v_{G_1}+v_{G_2}-1}<\frac{e_{G_1}+e_{G_2}}{v_{G_1}+v_{G_2}-2}\le\max_{1\le i\le2}\frac{e_{G_i}}{v_{G_i}-1}=\max_{1\le i\le2}d_{G_i}.
\]

Let $K^{(1)},\dots,K^{(t)} \subseteq $ be the $\ell$-cliques of $B(\l,r,t)$ as defined in Definition~\ref{bra}.
Let $B'$ intersect some $t'$ of them, respectively, in $s_{i_1},\dots,s_{i_{t'}}$ vertices.
Our goal is to show that $d_{B'}\le\l/2$, or equivalently, $e_{B'}-\tfrac \l2(v_{B'}-1)\le0.$ Note that $B'$ intersects at most $t'-1$ bridges, each in at most $\binom{r+1}2$ edges. This, together with inequalities $s_{i_j}\le \l$ and $r(r+1)\le\l$, implies that
\[
e_{B'}-\frac \l2(v_{B'}-1)\le\sum_{j=1}^{t'}\binom{s_{i_j}}2+(t'-1)\binom{r+1}2-\frac\l2\left(\sum_{j=1}^{t'}(s_{i_j}-1)+(t'-1)\right)\le0,
\]
which proves (\ref{md}).

Finally,
\[
\Phi_B=\min_{H\subseteq B,e_H>0}\Psi_H=n\min_H n^{v_H-1}p^{e_H}=n\min_H\left(np^{\frac{e_H}{v_H-1}}\right)^{v_H-1}\ge n\left(np^{m_B} \right)\ge C^{\l/2}n\ge Cn,
\]
where the first inequality uses the bounds $v_H\ge2$ and $np^{\frac{e_H}{v_H-1}}\ge np^{m_B}\ge C^{\l/2}\ge1$. \qed

\subsection{Subgraphs in dense graphs and hypergraphs}

In this subsection we quote several extremal results which guarantee the presence of copies of a given subgraph in a dense graph or hypergraph.
The first of them is the following supersaturation result of Erd\H os and Simonovits from~\cite{ES1983}. Recall that $\chi(F)$ denotes the \emph{chromatic number} of a graph $F$.

\begin{lemma}[\cite{ES1983}]\label{super} Let $k\ge3$ and $F$ be a graph with chromatic number $\chi(F) = k$. For every $\eps>0$ there exist $\beta>0$ and $n_0$ such that if a graph $G$ with $n\ge n_0$ vertices has at least
\[
\left(\frac{k-2}{k-1}+\eps\right)\binom n2
\]
edges, then $G$ contains at least $\beta n^{v_F}$ copies of $F$.
\end{lemma}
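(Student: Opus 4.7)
The plan is to combine the classical Erd\H os--Stone--Simonovits theorem (which guarantees a single copy of $F$ above the Tur\'an density) with a random-sampling and double-counting argument (which upgrades one copy to $\Omega(n^{v_F})$ copies). This is the standard route for supersaturation results of this kind.

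First, since $\chi(F)=k$, the graph $F$ embeds into the $v_F$-blow-up of $K_k$, so the Tur\'an density of $F$ equals $\tfrac{k-2}{k-1}$. The Erd\H os--Stone--Simonovits theorem therefore provides a fixed integer $n_1=n_1(F,\eps)$ such that every graph on $n_1$ vertices with more than $\bigl(\tfrac{k-2}{k-1}+\tfrac{\eps}{2}\bigr)\binom{n_1}{2}$ edges already contains a copy of $F$. This will be the input to our averaging argument, and essentially all of the \emph{work} of the lemma is hidden inside it.

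Next, I would pick a uniformly random $n_1$-subset $S\subseteq V(G)$. Since each edge of $G$ survives in $G[S]$ with probability $\binom{n_1}{2}/\binom{n}{2}$, we have
\[
\EE[e(G[S])]\ge\left(\frac{k-2}{k-1}+\eps\right)\binom{n_1}{2}.
\]
Applying Markov's inequality to the nonnegative variable $\binom{n_1}{2}-e(G[S])$, one obtains a constant fraction $\alpha=\alpha(\eps)>0$ of $n_1$-subsets $S$ for which $e(G[S])>\bigl(\tfrac{k-2}{k-1}+\tfrac{\eps}{2}\bigr)\binom{n_1}{2}$, and hence $G[S]$ contains a copy of $F$. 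A double count of pairs (copy of $F$ in $G$, $n_1$-subset containing it) yields at least $\alpha\binom{n}{n_1}$ such pairs on one side and at most (\# copies of $F$)$\cdot\binom{n-v_F}{n_1-v_F}$ on the other. Since $n_1$ depends only on $F$ and $\eps$, the ratio $\binom{n}{n_1}/\binom{n-v_F}{n_1-v_F}$ is $\Theta_{F,\eps}(n^{v_F})$, which produces the required $\beta n^{v_F}$ copies.

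The principal obstacle is really the input, namely the Erd\H os--Stone--Simonovits theorem itself, which is a substantial classical result; once it is in hand, both the random-sampling step and the final double count are routine and introduce only losses that can be absorbed into the constant $\beta$.
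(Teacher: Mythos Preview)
Your proposal is a correct sketch of the standard supersaturation argument (Erd\H os--Stone--Simonovits plus averaging over random $n_1$-subsets and a double count), and it indeed yields $\beta n^{v_F}$ copies with $\beta$ depending only on $F$ and $\eps$. Note, however, that the paper does not give its own proof of this lemma: it is quoted verbatim as a known result of Erd\H os and Simonovits~\cite{ES1983}, so there is nothing to compare against beyond observing that your outline is exactly the classical proof from that reference.
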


A related result we are going to use in Section 8 was proved by Alon and Yuster in~\cite{AY1996}. For graphs $G$ and $F$, we say that $G$ has \emph{an $F$-factor} if $G$ contains $\lfloor v_G/v_F\rfloor$ vertex-disjoint copies of $F$.
\begin{theorem}\label{AY}
For  every  $\eps>0$ and  for  every  graph  $F$  there exists  a  $T_0=T_0(\eps,F)$ such  that   for every  $T\ge T_0$, any   graph   $G$   with   $T$   vertices   and   with   minimum   degree $\delta(G)\ge(1-1/\chi(F)+\eps)T$  has an  $F$-factor.
\end{theorem}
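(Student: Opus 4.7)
Set $k=\chi(F)$ and $f=v_F$. The natural route is via the Szemer\'edi Regularity Lemma combined with the Hajnal--Szemer\'edi theorem and the Blow-up Lemma of Koml\'os, S\'ark\"ozy, and Szemer\'edi. Fix auxiliary parameters $d\ll\eta\ll\eps,1/f$. I would first apply the Regularity Lemma to $G$ to obtain an equitable $\eta$-regular partition $V_0\cup V_1\cup\dots\cup V_t$ with $|V_0|\le\eta T$. Form the reduced graph $R$ on $[t]$ by joining $i$ to $j$ whenever $(V_i,V_j)$ is $\eta$-regular with density at least $d$. A routine averaging calculation shows that the minimum-degree hypothesis $\delta(G)\ge(1-1/k+\eps)T$ transfers to the reduced graph as $\delta(R)\ge(1-1/k+\eps/2)t$.

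Since $k=\chi(F)$, the Hajnal--Szemer\'edi theorem applied to $R$ then yields a $K_k$-factor: a partition of $V(R)$ into $\lfloor t/k\rfloor$ vertex-disjoint copies of $K_k$, with at most $k-1$ leftover vertices. For each such copy of $K_k$ on clusters $V_{i_1},\dots,V_{i_k}$, a short cleanup converts the $k$-tuple into a \emph{super-regular} $k$-tuple at the cost of moving an $O(\eta)$-fraction of each cluster into $V_0$. Because $\chi(F)=k$ the graph $F$ embeds into the complete $k$-partite graph, so the Blow-up Lemma produces a perfect $F$-factor on each such super-regular $k$-tuple. Collecting these local $F$-factors already gives vertex-disjoint copies of $F$ covering all but at most $O(\eta T)$ vertices of $G$, which establishes the approximate version of the theorem.

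The main obstacle is upgrading this almost-perfect tiling to a genuine $F$-factor covering all but fewer than $f$ vertices. The remaining exceptional set consists of $V_0$, the vertices shed during the super-regularisation step, and any vertices sitting in clusters not used by the $K_k$-factor. Absorbing them requires additional structure: one reserves, before invoking the Regularity Lemma, a small random \emph{buffer} set $B\subseteq V(G)$ in which, by the minimum-degree hypothesis, every constant-sized subset of outside vertices admits many common extensions to a copy of $F$. Each exceptional vertex can then be greedily swapped into a copy of $F$ supported mostly inside $B$, destroying only a $o(1)$-fraction of the preliminary tiling and leaving a set of uncovered vertices so small that its size is divisible, after negligible further adjustments, by $f$. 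The bound $\delta(G)\ge(1-1/k+\eps)T$ is precisely what powers both the Hajnal--Szemer\'edi step on $R$ and the existence of sufficiently many absorbing copies of $F$ through $B$; balancing these two uses of the degree condition is the technical heart of the argument.
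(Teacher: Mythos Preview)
The paper does not prove this theorem; it is quoted verbatim as a known result of Alon and Yuster~\cite{AY1996} and used as a black box (specifically, to obtain a $C^k_{2k+2}$-factor in the reduced graph in Claim~\ref{8.2}). So there is no ``paper's own proof'' to compare against.

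That said, your sketch is a sound modern route to the Alon--Yuster theorem: Regularity Lemma, degree inheritance to the reduced graph, Hajnal--Szemer\'edi on the reduced graph, super-regularisation, and then the Blow-up Lemma inside each $K_k$-block. The original 1996 argument predates the Blow-up Lemma and handles the embedding more directly, and it deals with the leftover vertices by a careful redistribution among clusters rather than by a random absorbing buffer. Your absorption paragraph is the loosest part of the sketch: as written it is not clear that the greedy swapping through $B$ preserves vertex-disjointness or that the divisibility adjustment at the end can always be made, and you should be explicit that the minimum-degree condition guarantees every vertex lies in $\Omega(T^{f-1})$ copies of $F$, which is what drives the absorption. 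For the purposes of the present paper, however, none of this is needed --- the theorem is simply invoked.
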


As our proof of Covering Lemma~\ref{prop:covering} is based on the Regularity Method, we need the following two well-known results.
The first of them is a version of Szemer\'edi's Regularity Lemma~\cite{Sz1978} (see also Section 7.2 in~\cite{DRRS}). For two real numbers~$\delta>0$ and $d\in [0, 1]$,  given a graph~$G$ and two nonempty disjoint sets $A, B\subseteq V(G)$, we say that the pair $(A, B)$ is {\it $(\delta, d)$-quasirandom} if for all $X\subseteq A$ and $Y\subseteq B$ the inequality
\[
\big|e(X, Y)-d|X||Y|\big|\le \delta |A||B|
\]
holds, where $e(X,Y)$ is the number of edges with one endpoint in $X$ and the other in $Y$.
The pair $(A, B)$ is {\it $\delta$-quasirandom} if it is $(\delta, d)$-quasirandom for $d=e(A, B)/|A||B|$. This last quantity is called \emph{the density}  of the pair $(A,B)$ in $G$. In Section \ref{sec:cover} we will need the following simple observation the proof of which is left as an exercise.

\begin{fact}\label{heredit}
If $(A, B)$ is a $\delta$-quasirandom pair in $G$ and $A'\subset A$, $B'\subset B$, with $|A'|\ge\alpha|A|$, $|B'|\ge \beta|B|$ for some $\alpha,\beta>0$, then $(A',B')$ is $\delta'$-quasirandom  in $G$ with $\delta'=\tfrac{2\delta}{\alpha\beta}$. \qed
\end{fact}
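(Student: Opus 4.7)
The plan is to deduce $\delta'$-quasirandomness of $(A',B')$ from $\delta$-quasirandomness of $(A,B)$ by a straightforward triangle inequality, the only nontrivial ingredient being a comparison of the two densities $d=e(A,B)/(|A||B|)$ and $d'=e(A',B')/(|A'||B'|)$.

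First I would apply the $\delta$-quasirandomness of $(A,B)$ with $X=A'$ and $Y=B'$. Since $A'\subseteq A$ and $B'\subseteq B$, this yields
\[
\bigl|e(A',B')-d\,|A'||B'|\bigr|\le\delta|A||B|,
\]
and dividing by $|A'||B'|$ together with the lower bounds $|A'|\ge\alpha|A|$ and $|B'|\ge\beta|B|$ gives
\[
|d'-d|\le\frac{\delta|A||B|}{|A'||B'|}\le\frac{\delta}{\alpha\beta}.
\]

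Next, for arbitrary $X\subseteq A'\subseteq A$ and $Y\subseteq B'\subseteq B$, the triangle inequality together with the quasirandomness of $(A,B)$ applied to $(X,Y)$ yields
\[
\bigl|e(X,Y)-d'|X||Y|\bigr|
\le\bigl|e(X,Y)-d|X||Y|\bigr|+|d-d'|\,|X||Y|
\le\delta|A||B|+\frac{\delta}{\alpha\beta}|X||Y|.
\]
Using $|X||Y|\le|A'||B'|$ for the second term, and $|A||B|\le\frac{1}{\alpha\beta}|A'||B'|$ for the first, both summands are bounded by $\frac{\delta}{\alpha\beta}|A'||B'|$, so their sum is at most $\frac{2\delta}{\alpha\beta}|A'||B'|$, which is exactly the desired $\delta'$-quasirandomness with $\delta'=\tfrac{2\delta}{\alpha\beta}$.

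There is no real obstacle here; the only point to be careful about is that the quasirandomness condition measures the discrepancy against $|A||B|$ (and not $|X||Y|$), so when we change to the new ground pair $(A',B')$ we have to absorb a factor of $1/(\alpha\beta)$, which is precisely what accounts for the factor $2/(\alpha\beta)$ in $\delta'$. No hypothesis on $d$ or on the relative sizes beyond the stated lower bounds is needed.
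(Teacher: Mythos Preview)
Your proof is correct and is the standard argument. Note that the paper does not actually supply a proof of this fact: it states the result, remarks that ``the proof of which is left as an exercise,'' and places a \qed\ symbol at the end of the statement. So there is nothing to compare against, but what you wrote is precisely the routine computation the authors had in mind.
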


\begin{lemma}[Szemeredi's Regularity Lemma, \cite{Sz1978}]\label{regularity_lemma}
Given $\delta > 0$ and $T_0\in\mathbb{N}$ there exists an integer $T_1=T_1(\delta, T_0)$ such that every graph $G = (V,E)$ on $n \geq T_0$ vertices admits a partition
\[
V = V_0 \dcup V_1 \dcup \ldots \dcup V_T
\]
of its vertex set such that
\begin{enumerate}[label=\rmlabel]
\item $T\in [T_0,T_1]$, $|V_0| \leq \delta |V|$, $|V_1| = \ldots = |V_T|$, and
\item for every $i\in [T]$ the set $\{ j \in [T] \setminus \{i\}: (V_i,V_j) \text{ is not } \delta\text{-quasirandom}\}$ has size at most $\delta T$.
\end{enumerate}
\end{lemma}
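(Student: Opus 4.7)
The plan is to follow Szemer\'edi's original mean-square-density (or \emph{index}) increment argument. For a partition $\mathcal{P} = \{V_0, V_1, \dots, V_T\}$ of $V(G)$, define the index
\[
q(\mathcal{P}) = \sum_{1 \le i < j \le T} \frac{|V_i||V_j|}{|V|^2}\, d(V_i, V_j)^2,
\]
where $d(V_i, V_j) = e(V_i, V_j)/(|V_i||V_j|)$. Since densities lie in $[0,1]$ and the weights sum to at most $1/2$, one has $0 \le q(\mathcal{P}) \le 1/2$. The proof rests on showing that, as long as the partition has too many non-quasirandom pairs, one can refine it so that $q$ increases by a definite amount, and then iterating until the process must stop.

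\textbf{Step 1 (Monotonicity under refinement).} First I would verify that if $\mathcal{P}'$ refines $\mathcal{P}$, then $q(\mathcal{P}') \ge q(\mathcal{P})$. This is a straightforward pair-by-pair application of the Cauchy--Schwarz inequality, since the density across $(V_i, V_j)$ is the $(|V_i||V_j|)^{-1}$-weighted average of densities across the refined sub-pairs.

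\textbf{Step 2 (Defect inequality for a single irregular pair).} For a pair $(A,B)$ that fails to be $\delta$-quasirandom, let $X \subseteq A$ and $Y \subseteq B$ be witnesses, so that $|e(X,Y) - d(A,B)|X||Y|| > \delta|A||B|$. Splitting $A = X \dcup (A \setminus X)$ and $B = Y \dcup (B \setminus Y)$ gives four sub-pairs, and the defect form of Cauchy--Schwarz yields
\[
\sum_{s,t \in \{1,2\}} \frac{|A_s||B_t|}{|A||B|}\, d(A_s,B_t)^2 \,\ge\, d(A,B)^2 + \delta^4.
\]
This is the crucial local energy gain.

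\textbf{Step 3 (Simultaneous refinement).} If $\mathcal{P}$ has more than $\delta T^2$ non-quasirandom pairs, I would, for each $V_i$, simultaneously carry out the binary splits supplied by the witnesses of all such pairs involving $V_i$. This refines each $V_i$ into at most $2^T$ sub-parts. Aggregating Step 2 over the $\delta T^2/2$ problematic pairs and combining with Step 1 gives a refinement $\mathcal{P}''$ with $q(\mathcal{P}'') \ge q(\mathcal{P}) + \delta^5/4$ or thereabouts. To produce an equitable partition, one then chops each sub-part into blocks of a common small size and dumps the remaining vertices into a new exceptional set; provided the base block size is chosen carefully in terms of $\delta$, this adds only $O(\delta/4^T)|V|$ to the exceptional set and reduces $q$ only negligibly.

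\textbf{Step 4 (Iteration).} Starting from any equitable partition into $T_0$ parts with $V_0 = \varnothing$, repeatedly apply Step 3. Since $q \le 1/2$, the process must stop after at most $O(\delta^{-5})$ iterations, at which point the partition satisfies condition~\rmlabel. The number of parts grows by a factor of roughly $2^T$ per iteration, yielding the usual tower-type bound $T_1 = T_1(\delta, T_0)$. The main obstacle, and the most delicate accounting in the proof, is Step 3: maintaining equitability of part sizes while simultaneously controlling the exceptional set $V_0$ below $\delta|V|$ across all iterations, because each refinement and re-equalization step can both shrink part sizes and inflate $V_0$.
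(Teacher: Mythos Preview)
The paper does not prove this lemma; it is simply quoted from~\cite{Sz1978} as a known result, so there is nothing to compare against. Your outline is the standard index-increment argument and is correct in its essentials. One small point worth noting: the version stated here is the \emph{degree form} of the Regularity Lemma (each class $V_i$ is irregular with at most $\delta T$ other classes), whereas your Step~3 tests only the global count of irregular pairs. To obtain the degree form you can either trigger the refinement whenever some class has more than $\delta T$ irregular partners (the same $\delta^5$ energy gain goes through), or prove the usual version with parameter $\delta/2$ and then move into $V_0$ any class that has more than $\delta T$ irregular partners; there can be at most $\delta T/2$ such classes, so $|V_0|$ stays below $\delta n$.
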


A partition guaranteed by the above lemma will be referred to as \emph{$\delta$-quasirandom}. Once a quasirandom partition is established, one can easily count copies of a given subgraph in it.

\begin{lemma}[Counting Lemma]\label{counting_lemma}
Let $F$ be a graph with vertex set $[f]$ and let $G$ be another graph with vertex partition $V(G) = V_1 \dcup \ldots \dcup V_f$ such that $(V_i,V_j)$ is a $\delta$-quasirandom pair whenever $ij \in F$. Then the number of ordered copies of $F$ in $G$, i.e. the number of $f$-tuples $(v_1,\ldots,v_f) \in V_1 \times \ldots \times V_f$ such that $v_iv_j\in G$ whenever $ij\in F$, equals
\[
\left( \prod_{ij\in F} d_{ij} \pm e_F\delta\right) \prod_{i=1}^f |V_i|,
\]
where $d_{ij} = e(V_i,V_j)/|V_i||V_j|$.
\end{lemma}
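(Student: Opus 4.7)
The plan is to prove this by induction on $e_F$. The base case $e_F=0$ is immediate: every $f$-tuple in $V_1\times\dots\times V_f$ satisfies the empty set of constraints, so the number of ordered copies equals $\prod_i|V_i|$, matching the claimed formula (with empty product and zero error).

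For the inductive step, fix an arbitrary edge $ij\in F$, set $F'=F\setminus\{ij\}$, and write $N(\cdot)$ for the number of ordered copies of the argument. The key single-edge exchange bound I would establish is
\begin{equation*}
\bigl|\,N(F)-d_{ij}\,N(F')\,\bigr|\;\le\;\delta\prod_k|V_k|.
\end{equation*}
Once this is in hand, the inductive hypothesis applied to $F'$ yields
\begin{equation*}
d_{ij}\,N(F')\;=\;\Bigl(\prod_{i'j'\in F}d_{i'j'}\;\pm\;(e_F-1)\delta\Bigr)\prod_k|V_k|,
\end{equation*}
where I use $d_{ij}\in[0,1]$ to absorb the multiplicative effect on the error term. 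Combining the two displays through the triangle inequality produces the desired $\pm e_F\delta$ error for $N(F)$.

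To prove the single-edge exchange bound, I would fix the outer coordinates $(v_k)_{k\ne i,j}$ and partition the edges of $F'$ according to their incidence with $\{i,j\}$: those edges disjoint from $\{i,j\}$ contribute a constant factor $c\in\{0,1\}$; those incident to $i$ but not $j$ carve out a set $X\subseteq V_i$ of admissible values of $v_i$; and those incident to $j$ but not $i$ define $Y\subseteq V_j$ analogously. The inner sum over $(v_i,v_j)\in V_i\times V_j$ then collapses to $c\cdot\bigl(e(X,Y)-d_{ij}|X||Y|\bigr)$, which by $\delta$-quasirandomness of the pair $(V_i,V_j)$ is bounded in absolute value by $\delta|V_i||V_j|$. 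Summing over the $\prod_{k\ne i,j}|V_k|$ outer tuples yields the claimed bound.

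The only mildly delicate point is the factorisation of the inner sum through clean sets $X$ and $Y$, which hinges on the three-way partition of $F'$-edges at the endpoints $i$ and $j$. It is worth noting that one does not need the hereditary form of quasirandomness from Fact~\ref{heredit}: the error on the right is measured against $|V_i||V_j|$ rather than $|X||Y|$, so the subsets $X,Y$ may be arbitrarily small without harming the bound.
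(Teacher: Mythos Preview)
Your proof is correct; this is the standard induction-on-edges argument for the Counting Lemma. The paper itself does not prove this statement---it is introduced as one of ``two well-known results'' underpinning the Regularity Method and is stated without proof.
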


The last two results quoted in this section  deal with $h$-uniform hypergraphs (or $h$-graphs, for short) which are collections of $h$-element sets on a given vertex set (for $h=2$ these are just graphs). The first one comes from \cite{E1964} (see Corollary on page 188). An $h$-graph $\cH$ is \emph{$h$-partite} if its vertex set can be partitioned into sets $V_1\cup\ldots\cup V_h$ in such a way that for every edge $e$ we have $|e\cap V_i|=1$ for each $1\le i\le h$. Let $\cK_{h}^{(h)}(q)$ denote the $h$-partite complete $h$-graph. Note that the number of edges of $\cK_{h}^{(h)}(q)$ is $q^h$.


\begin{lemma}[\cite{E1964}]\label{erkpar} For all $h,q\ge2$ and all $w\ge w_0(h,q)$, if $\cH$ is an $h$-partite $h$-graph with each partition set of size~$w$, and with at least
\[
\frac{3^hh^h}{w^{1/q^{h-1}}}w^h
\]
edges, then $\cH$ contains a copy of $\cK_{h}^{(h)}(q)$ with $q$ vertices in each partition class.
\end{lemma}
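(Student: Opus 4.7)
The plan is to prove the lemma by induction on $h \geq 2$, with the classical K\H{o}v\'ari--S\'os--Tur\'an theorem for bipartite graphs as the base case. For $h=2$, a bipartite $\cH$ with parts of size $w$ and at least $36\,w^{2-1/q}$ edges contains $\cK^{(2)}_2(q)$: one counts the pairs $(v, T)$ with $v \in V_1$ and $T \in \binom{N(v)}{q}$, and Jensen's inequality together with the bound $\binom{w}{q} \leq w^q/q!$ yields some $T \in \binom{V_2}{q}$ with $|N(T)| \geq q$, provided $w \geq w_0(2,q)$ is large enough for the asymptotic $\binom{\bar d}{q} \geq \bar d^q/(2q!)$ to hold for the average degree $\bar d$.

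For the inductive step I would assume the claim for $h-1$ and process the last coordinate. Given $\cH$ on parts $V_1, \ldots, V_h$ of size $w$ with at least $3^h h^h w^{h - 1/q^{h-1}}$ edges, for each $(h-1)$-tuple $(v_1, \ldots, v_{h-1}) \in V_1 \times \cdots \times V_{h-1}$ let $d(v_1, \ldots, v_{h-1})$ count the $v_h \in V_h$ with $\{v_1, \ldots, v_h\} \in \cH$. Double counting the pairs
\[
\bigl((v_1, \ldots, v_{h-1}),\,T\bigr)\quad\text{with}\quad T \in \tbinom{V_h}{q},\ T \subseteq N(v_1, \ldots, v_{h-1}),
\]
one obtains $\sum \binom{d(v_1, \ldots, v_{h-1})}{q} \geq w^{h-1} \binom{\bar d}{q}$ by Jensen, where $\bar d = e(\cH)/w^{h-1} \geq 3^h h^h w^{1 - 1/q^{h-1}}$. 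Dividing by $\binom{w}{q}$ selects some $T \subseteq V_h$ such that the $(h-1)$-partite $(h-1)$-graph $\cH_T$ on $V_1 \cup \cdots \cup V_{h-1}$, whose edges are exactly those $(h-1)$-tuples satisfying $T \subseteq N(v_1, \ldots, v_{h-1})$, has at least $\tfrac12 (3^h h^h)^q w^{(h-1) - 1/q^{h-2}}$ edges, well above the threshold $3^{h-1}(h-1)^{h-1} w^{(h-1) - 1/q^{h-2}}$ required to apply the inductive hypothesis. This produces a copy of $\cK^{(h-1)}_{h-1}(q)$ in $\cH_T$, and together with $T$ it assembles into the desired $\cK^{(h)}_h(q)$ in $\cH$.

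The essential point, and the only real obstacle, is making the constants propagate cleanly: each Jensen step raises the density deficit $w^{-1/q^{i-1}}$ to the $q$-th power, which is precisely why the forbidden exponent in the statement decays geometrically as $1/q^{h-1}$. The multiplicative factor $3^h h^h$ is wasteful enough that the inequality $\tfrac12(3^h h^h)^q \geq 3^{h-1}(h-1)^{h-1}$ is painless at every level, and $w_0(h,q)$ need only be large enough for the approximation $\binom{\bar d}{q} \geq \bar d^q/(2q!)$ to hold at all $h$ layers of the recursion, which follows from $\bar d = \Theta(w^{1-1/q^{h-1}}) \to \infty$.
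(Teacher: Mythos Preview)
Your argument is correct and is essentially the original inductive proof from Erd\H{o}s's 1964 paper: peel off one partition class, apply Jensen to the degree sequence of $(h-1)$-tuples, pigeonhole onto a common $q$-set $T\subseteq V_h$, and recurse on the resulting $(h-1)$-partite link hypergraph. The constant bookkeeping you sketch is sound; in particular the key point that raising the density deficit $w^{-1/q^{h-1}}$ to the $q$-th power produces $w^{-1/q^{h-2}}$ is exactly what makes the exponent propagate, and $(3^h h^h)^q/2 \ge 3^{h-1}(h-1)^{h-1}$ holds trivially for $h,q\ge 2$.

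Note, however, that the paper does \emph{not} supply its own proof of this lemma: it is quoted verbatim as a result of Erd\H{o}s \cite{E1964} (see the Corollary on page~188 there) and is used as a black box inside the proof of Proposition~\ref{st}. So there is nothing in the paper to compare against beyond the citation; your write-up simply reconstructs the cited argument.
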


In ~\cite[Lemma~8]{RRS2007} a counting extension of Lemma \ref{erkpar} has been deduced from the proofs in \cite{E1964}. Here we quote this result with respect to unordered copies.

\begin{lemma}[\cite{RRS2007}]
\label{lem:erdos} For all integers $h\geq 2$ and $q\ge h+1$ and every $d>0$ there exist $\tau>0$ and $n_0$ such that for every $h$-graph~$\cH$ on $n\geq n_0$ vertices with $e_{\cH}\ge dn^h$, there are at least $ \tau n^{hq}$ copies of $\cK_{h}^{(h)}(q)$ in $\cH$.
\end{lemma}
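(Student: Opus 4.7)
The plan is to follow the classical Erdős--Kővári--Sós--Turán supersaturation strategy. \emph{Step 1.} First I would pass from $\cH$ to an $h$-partite sub-hypergraph by a random partition argument: place each vertex of $V(\cH)$ independently and uniformly into one of $h$ bins $V_1,\dots,V_h$. Each edge of $\cH$ is ``rainbow'' (has exactly one vertex in each $V_i$) with probability $h!/h^h$, so the expected number of rainbow edges is $(h!/h^h)\,e_\cH\ge (h!/h^h)\,d n^h$. By Chernoff-type concentration one can fix a partition with $|V_i|=(1\pm o(1))n/h$ and at least $(h!/(2h^h))\,d n^h$ rainbow edges. Letting $\cH'$ be the induced $h$-partite $h$-graph on $V_1,\dots,V_h$, every copy of $\cK_h^{(h)}(q)$ in $\cH'$ is also a copy in $\cH$, so it suffices to count copies in $\cH'$. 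With $w=\lfloor n/h\rfloor$ the hypergraph $\cH'$ has parts of size $w$ and edge density at least some constant $\eta_0=\eta_0(d,h)>0$.

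\emph{Step 2.} The remaining assertion can be proved by induction on $h$: any $h$-partite $h$-graph with parts of size $w$ and edge density at least $\eta$ contains at least $\tau'(\eta,h,q)\,w^{hq}$ copies of $\cK_h^{(h)}(q)$, provided $w\ge w_0(\eta,h,q)$. The base case $h=2$ is the standard Kővári--Sós--Turán counting lemma for $K_{q,q}$-copies in a bipartite graph, which follows from two applications of the convexity of $\binom{x}{q}$.

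\emph{Step 3 (inductive step).} Assume the claim for $h-1$. For each $A'\in\binom{V_1}{q}$ let $\cG(A')$ be the $(h-1)$-partite $(h-1)$-graph on $V_2,\dots,V_h$ with edge set
\[
E(\cG(A'))=\bigl\{(v_2,\dots,v_h)\in V_2\times\dots\times V_h:\{a,v_2,\dots,v_h\}\in E(\cH')\text{ for all }a\in A'\bigr\}.
\]
Writing $d(v_2,\dots,v_h)$ for the codegree of $(v_2,\dots,v_h)$ into $V_1$, double-counting together with Jensen's inequality applied to the convex function $\binom{x}{q}$ give
\[
\sum_{A'\in\binom{V_1}{q}}e(\cG(A'))=\sum_{(v_2,\dots,v_h)}\binom{d(v_2,\dots,v_h)}{q}\ge w^{h-1}\binom{\eta w}{q}\ge c_1\,w^{h-1+q},
\]
for $w$ large enough. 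Markov's inequality then yields a positive fraction (depending only on $\eta,h,q$) of $q$-subsets $A'\subseteq V_1$ for which $\cG(A')$ has edge density bounded below by a constant. Applying the inductive hypothesis to each such $\cG(A')$ produces at least $\tau_{h-1}\,w^{(h-1)q}$ copies of $\cK_{h-1}^{(h-1)}(q)$, and any such copy together with $A'$ yields a copy of $\cK_h^{(h)}(q)$ in $\cH'$; distinctness is automatic because the $V_1$-side of the resulting copy uniquely determines $A'$. Summing over good $A'$ gives at least $\tau' w^{hq}=\tau(d,h,q)\,n^{hq}$ copies, completing the induction and the proof.

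\emph{Main obstacle.} The argument is conceptually routine, and I expect the only genuine care to go into the bookkeeping of constants along the $h$ induction steps and into a small technicality for Jensen: $\binom{x}{q}$ is convex only on $x\ge q-1$, so one must first argue that codegrees below $q$ contribute $0$ and that the average codegree $\eta w$ exceeds $q$ for $w\ge w_0$; both are immediate once $w_0$ is chosen large enough. The assumption $q\ge h+1$ is inherited from Lemma~\ref{erkpar} and guarantees that the Erdős-type existence bound remains meaningful at every step of the induction; no further structural obstacle is expected.
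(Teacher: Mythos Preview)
The paper does not prove this lemma; it is quoted from \cite{RRS2007} with the remark that it can be ``deduced from the proofs in \cite{E1964}'', i.e., by tracking the counting implicit in Erd\H{o}s's existence argument for Lemma~\ref{erkpar}. Your proposal does exactly this and is correct: the random $h$-partition to reduce to the partite setting, followed by induction on $h$ via Jensen applied to codegrees, is the standard supersaturation derivation, and is in fact the same random-partition device the paper itself uses in the proof of Corollary~\ref{blow}.

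Two minor remarks. First, in Step~1 concentration is unnecessary: the first-moment method already yields a partition with at least $(h!/h^h)\,e_\cH$ rainbow edges, and since this forces each part to have size at least $(h!/h^h)\,dn$, the balanced-part requirement of your inductive statement is met up to constants. Second, your closing comment about $q\ge h+1$ is off: this hypothesis does not appear in Lemma~\ref{erkpar} and is not used anywhere in your argument, which goes through for every $q\ge 2$. It is a harmless artifact of the statement as quoted.
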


This lemma has a very useful consequence for graphs. Recall Definition \ref{blo} and observe that for all $u_i\in U_i$, $i=1,\dots,h$, the subgraph of $F(t_1,\dots,t_h)$ induced by $\{u_1,\dots,u_h\}$ is isomorphic to $F$. If  $t_1=\ldots =t_h=:q$, then we denote by $\cF(F(q))$ the family of all $q^h$ such subgraphs.

\begin{cor}\label{blow}
For every integer $q\geq 2$, real $d>0$, and a graph $F$ there exist $\tau>0$ and $n_0$ such that the following holds.   Let  $G$ be a graph on
$n\geq n_0$ vertices  and let   $\cF$ be a family of copies of $F$ contained in $G$ of size $|\cF|\ge dn^{v_F}$. Then $G$  contains at least $ \tau n^{q v_{F}}$ copies  $F'(q)$ of
 the $q$-blow-up $F(q)$ of $F$ such that $\cF(F'(q))\subseteq \cF$.
\end{cor}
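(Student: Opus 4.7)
The plan is to reduce the statement to Lemma~\ref{lem:erdos} by encoding each copy in $\cF$ as an edge of an auxiliary $h$-uniform hypergraph on $V(G)$, where $h := v_F$, combined with a random partition trick to fix vertex roles. Write $V(F) = \{w_1, \dots, w_h\}$. The naive attempt -- taking as edges the vertex sets of members of $\cF$ -- breaks because a $\cK_h^{(h)}(q)$ in such a hypergraph only yields disjoint sets $U_1, \dots, U_h$ whose transversals each carry \emph{some} copy of $F$ in $\cF$, with no guarantee that the vertex from $U_i$ plays the role of $w_i$. This role-assignment issue is the main technical obstacle.

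To circumvent it, first randomly partition $V(G) = P_1 \dcup \cdots \dcup P_h$ by placing each vertex into $P_i$ independently with probability $1/h$. For each injective homomorphism $\phi : F \to G$ whose image belongs to $\cF$, the event that $\phi(w_i) \in P_i$ for all $i$ occurs with probability $h^{-h}$, so since there are at least $|\mathrm{Aut}(F)| \cdot |\cF| \ge d n^h$ such homomorphisms, averaging provides a fixed partition for which at least $d' n^h$ of them are consistent with it, where $d' := d \, h^{-h} / |\mathrm{Aut}(F)|$ (or $dh^{-h}$ if one instead counts homomorphisms from the outset). Let $\cH$ be the $h$-graph on $V(G)$ whose edges are the $h$-sets $\{u_1, \dots, u_h\}$ with $u_i \in P_i$ such that $w_i \mapsto u_i$ extends to an injective homomorphism $F \to G$ with image in $\cF$; then $e_{\cH} \ge d' n^h$, and $\cH$ is naturally $h$-partite with parts $P_1, \dots, P_h$.

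Setting $q' := \max\{q, h+1\}$ and applying Lemma~\ref{lem:erdos} to $\cH$ yields at least $\tau' n^{hq'}$ copies of $\cK_h^{(h)}(q')$; by the partite structure of $\cH$, each such copy canonically splits into classes $U_1, \dots, U_h$ with $U_i \subseteq P_i$ and $|U_i| = q'$. The defining property of $\cH$ then says that for every transversal $(u_1, \dots, u_h) \in U_1 \times \cdots \times U_h$, the assignment $w_i \mapsto u_i$ gives a copy of $F$ belonging to $\cF$; equivalently, the induced blow-up $F(U_1, \dots, U_h)$ is a copy $F'(q')$ of $F(q')$ in $G$ with $\cF(F'(q')) \subseteq \cF$, and distinct $\cK_h^{(h)}(q')$-subgraphs of $\cH$ produce distinct such copies. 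To descend from $F(q')$ to $F(q)$, note that each $F(q')$ contains $\binom{q'}{q}^h$ sub-blow-ups isomorphic to $F(q)$ which inherit the required property, while a given copy of $F(q)$ in $G$ sits inside at most $n^{h(q'-q)}$ copies of $F(q')$; dividing yields at least $\tau n^{hq} = \tau n^{q v_F}$ distinct copies of $F(q)$ in $G$ with $\cF(F(q)) \subseteq \cF$, as desired. The main obstacle is the role-assignment issue, handled cleanly by the random partition; the rest is a direct invocation of Lemma~\ref{lem:erdos} together with routine counting.
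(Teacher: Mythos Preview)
Your proof is correct and follows essentially the same route as the paper's: encode $\cF$ as an $h$-partite $h$-graph on $V(G)$ via a random partition that fixes vertex roles, and then invoke Lemma~\ref{lem:erdos} to find many copies of $\cK_h^{(h)}(q)$, each of which yields a blow-up $F'(q)$ with $\cF(F'(q))\subseteq\cF$. You are in fact slightly more careful than the paper about the hypothesis $q\ge h+1$ in Lemma~\ref{lem:erdos}, handling it cleanly by passing to $q'=\max\{q,h+1\}$ and then counting down to $F(q)$.
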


\proof Let $V(F)=\{v_1,\dots,v_h\}$. Consider an auxiliary $h$-uniform hypergraph $\cH$ on the vertex set $V(G)$, where each edge corresponds to a copy $F'\in \cF$. Take a random partition $\Pi=V_1 \cup V_2 \cup \ldots \cup V_{h}$ of $V(G)$, where each vertex chooses its vertex class independently with probability $1/h$. Let $\cH_\Pi$ be the (random) $h$-partite subhypergraph of $\cH$ consisting of only those edges of $\cH$ which correspond to the copies of $F\in\cF$  with $v_i\in V_i$, $i=1,\dots,h$.
Observe that   $\E (|\cH_\Pi|) = \frac{1}{h^h} |\cH|,$ hence, there exists a partition $\Pi_0$ for which $|\cH_{\Pi_0}| \geq \frac{1}{h^h}|\cH|$.
Notice that $|\cH_{\Pi_0}| \ge d' n^h$, where $d' =  h^{-h}d$.
By Lemma~\ref{lem:erdos} applied to $\cH:=\cH_{\Pi_0}$, 
for some $\tau>0$ there are at least $\tau n^{qh}$ copies of $K^{(h)}_h(q)$ in~$\cH$. Note that each such copy corresponds to a copy $F'(q)$ of the $q$-blow-up $F(q)$ of $F$ in $G$. By the construction of $\cH$, we do have $\cF(F'(q))\subseteq \cF$.
\qed

\subsection{Interlacing sequences}

Here we prove a technical result which turns out to be crucial in establishing the existence of many connectable $m$-tuples when proving Lemmas~\ref{prop:absorbing} and~\ref{prop:covering}.

\begin{dfn}\label{interlaces}
For a graph $G$, we say that a sequence $(x_1,\ldots,x_{k+1}) \in V(G)^{k+1}$ \textit{interlaces} with a sequence $(y_1,\ldots,y_{k+1}) \in V(G)^{k+1}$, if
\[
\forall i =1,\ldots,k+1\,:\, y_i \in N_G(x_i,\ldots, x_{k+1}, y_1,\ldots,y_{i-1}).
\]
\end{dfn}

\begin{rem}\label{rema}\rm The above definition and Definition \ref{connectable_tuples} are related via the  notion of blow-up. Indeed, if each $x_i$, $i=1,\dots,k$, from Definition \ref{interlaces} is blown-up to a set $X_i'=\{x_i^{(1)},\dots,x_i^{(\ell)}\}$, while $x_{k+1}$ to a set $X'_{k+1}=\{x_{k+1}^{(1)},\dots,x_{k+1}^{(r)}\}$, then each sequence consisting of one element from each set $X_i'$ interlaces with $(y_1,\ldots,y_{k+1})$ and, consequently, the  sequences $\seq{x}=(x_1^{(1)},\dots,x_1^{(\ell)},\dots,x_k^{(1)},\dots,x_k^{(\ell)},x_{k+1}^{(1)},\dots,x_{k+1}^{(r)})$ and $(y_1,\ldots,y_{k+1})$ satisfy the condition in Definition \ref{connectable_tuples}. Hence, the subsequent technical result can be viewed as a tool  for creating $\xi$-connectable $m$-tuples.
\end{rem}

\begin{prop}\label{st}
 For every $k\geq 1$, $\eps > 0$, and $s$, there is $t$ and $\xi>0$ such that the following holds. For every $n$-vertex graph $G$ with $\delta(G) \geq \left(\frac{k}{k+1} + \eps\right)n$ and for every sequence of  disjoint sets $X_1,\dots, X_{k+1}$ in $V(G)$ of sizes $|X_i|=t$, $i=1,\dots,k+1$, there exist subsets $X_i'\subset X_i$ of sizes $|X_i'|=s$, $i=1,\dots,k+1$, and a set $Y\subset V(G)^{k+1}$ of size $|Y|=\xi n^{k+1}$ such that every $(x_1,\ldots,x_{k+1})\in X_1'\times\ldots\times X_{k+1}'$ interlaces with every $(y_1,\ldots,y_{k+1})\in Y$. Consequently, every sequence of vertices consisting of $\l$ elements of $X_1'$, followed by $\l$ elements of $X_2'$, ..., followed by $\l$ elements of $X_k'$, followed by $r$  elements of $X_{r+1}'$ is $\xi$-connectable in $G$.
\end{prop}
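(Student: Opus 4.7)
My plan is to handle the interlacing claim in two stages: first bound from below the total number of interlacing pairs $(\seq{x}, \seq{y})$ with $\seq{x} \in X_1 \times \cdots \times X_{k+1}$, and then convert this bulk count into a complete multipartite substructure by a uniform random selection of the $X_j'$'s. The final $\xi$-connectability clause will then follow immediately from Remark~\ref{rema}.

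For $\seq{y} = (y_1, \ldots, y_{k+1}) \in V(G)^{k+1}$ and $j \in [k+1]$, set $d_j(\seq{y}) := |N(y_1, \ldots, y_j) \cap X_j|$. Unwinding Definition~\ref{interlaces}, the number of $\seq{x} \in X_1 \times \cdots \times X_{k+1}$ interlacing with $\seq{y}$ is $\prod_j d_j(\seq{y})$ if $\{y_1, \ldots, y_{k+1}\}$ is a $(k+1)$-clique of $G$, and $0$ otherwise. I will estimate the total $S := \sum_{\seq{y}} \prod_j d_j(\seq{y})$ over such clique-tuples from below by counting from the $\seq{x}$-side: for each $\seq{x} \in X_1 \times \cdots \times X_{k+1}$ (whose entries are automatically distinct, since the $X_j$'s are disjoint), I build $\seq{y}$ greedily, at each step $j$ choosing $y_j \in N(J_j)$ for $J_j := \{x_j, \ldots, x_{k+1}, y_1, \ldots, y_{j-1}\}$, a set of exactly $k+1$ distinct vertices. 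Proposition~\ref{lem:scale} applied at the extremal case $|J_j|=k+1$ gives $|N(J_j)| \geq (k+1)\eps\,n$, and excluding the at most $2k+1$ previously used vertices leaves $\geq \tfrac12 (k+1)\eps\,n$ choices for $y_j$ once $n$ is large. Multiplying and summing over the $t^{k+1}$ tuples $\seq{x}$ produces $S \geq c\,t^{k+1} n^{k+1}$ with $c := \bigl(\tfrac12(k+1)\eps\bigr)^{k+1}$.

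Next, set $\eta := c/2$ and let $Y^\ast$ denote the set of ordered $(k+1)$-cliques $\seq{y}$ with $d_j(\seq{y}) \geq \eta t$ for every $j$. Each $\seq{y} \notin Y^\ast$ contributes at most $\eta t \cdot t^k$ to $S$, and there are at most $n^{k+1}$ such tuples, so the $Y^\ast$-restricted part of $S$ is still at least $(c/2)\,t^{k+1} n^{k+1}$; combining with the trivial bound $\prod_j d_j(\seq{y}) \leq t^{k+1}$ yields $|Y^\ast| \geq \rho\,n^{k+1}$ with $\rho := c/2$. Finally I will pick $X_j' \subseteq X_j$ uniformly and independently of size $s$ and observe that for each $\seq{y} \in Y^\ast$
\[
\PP\bigl[X_j' \subseteq N(y_1, \ldots, y_j) \cap X_j\ \text{for all}\ j\bigr] \;=\; \prod_{j=1}^{k+1} \binom{d_j(\seq{y})}{s}\Big/\binom{t}{s} \;\geq\; (\eta/2)^{s(k+1)}
\]
once $t$ is large enough in terms of $k, \eps, s$. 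Hence the expected number of $\seq{y} \in Y^\ast$ covered by the random $X_j'$'s is at least $\xi\,n^{k+1}$ for $\xi := \rho\,(\eta/2)^{s(k+1)} > 0$, so some deterministic choice of $X_j'$'s attains this expectation; taking $Y$ to be the resulting set of covered tuples gives the main claim.

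The step that I expect to require the most care is the greedy construction producing the lower bound on $S$: Proposition~\ref{lem:scale} only yields the strong joint-neighborhood bound $(k+1)\eps\,n$ when the probe set $J_j$ has exactly $k+1$ distinct vertices, so one must confirm at every step that $J_j$ has this size. The disjointness of the $X_j$'s handles the initial distinctness, while the constant-size trim at each greedy step handles the rest. The remaining pieces -- the tail bound giving $|Y^\ast| \geq \rho n^{k+1}$ and the probabilistic choice of the $X_j'$'s -- are routine as long as $t = t(k, \eps, s)$ is taken large enough for $\binom{\eta t}{s}/\binom{t}{s} \geq (\eta/2)^s$ to hold.
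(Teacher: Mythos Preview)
Your proof is correct, and it takes a genuinely different and more elementary route than the paper's.

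The paper proves the statement by an induction on $j=1,\dots,k+1$: at each step it fixes a large portion of sequences $\seq y$, uses pigeonhole to pin down a dense $(k+1-j)$-partite $(k+1-j)$-uniform hypergraph of admissible tails $(x_{j+1},\dots,x_{k+1})$, and then invokes Erd\H os's lemma (Lemma~\ref{erkpar}) to extract a complete multipartite sub-hypergraph, shrinking the sets $X_i$ along a hierarchy $t\gg t^{(1)}\gg\cdots\gg t^{(k+1)}=s$. In contrast, you bypass both the induction and Lemma~\ref{erkpar}: after the symmetric reformulation ``$\seq x$ interlaces with $\seq y$ iff $\seq y$ is an ordered clique and $x_j\in N(y_1,\dots,y_j)$ for all $j$,'' a single double count gives a positive density of $\seq y$'s that are \emph{individually} good in every coordinate, and then one uniform random choice of the $X_j'$'s (using independence across $j$) converts this into a common $Y$. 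Your argument yields explicit constants $\xi=\xi(k,\eps,s)$ and only needs $t$ large enough that $\binom{\eta t}{s}/\binom{t}{s}\ge(\eta/2)^s$, whereas the paper's proof needs a tower of applications of Lemma~\ref{erkpar} with the attendant hierarchy of parameters. What the paper's approach buys is a slightly stronger intermediate structure (at each step the current $X_i^{(j)}$'s support a full product), but for the stated proposition your single-shot averaging is both shorter and avoids the external hypergraph lemma.
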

\proof

Let us choose constants $t,t^{(1)},\dots,t^{(k)}$ satisfying
$$t\gg t^{(1)}\gg\ldots\gg t^{(k)}\gg t^{(k+1)} := s.$$
We are going to prove by induction on $j=1,\dots,k+1$ the following statement:
\begin{align}\label{prop:statement}
& \exists \xi_j>0,\;Y
^{(j)}\subset V^j ,\; |Y^{(j)}|\ge\xi_jn^j,\; \notag \\
& \exists X_i^{(j)}\subset X_i,\; |X_i^{(j)}|\ge t^{(j)},\;i=1,\dots,k+1, \text{ such that}\notag \\
&\forall (y_1,\dots,y_j)\in Y^{(j)},\;(x_1,\dots,x_{k+1})\in X_1^{(j)}\times\ldots\times X^{(j)}_{k+1} \\
&\forall i=1,\dots,j:\; y_i\in N(x_i,\dots,x_{k+1},y_1,\dots,y_{i-1}) \notag.
\end{align}
Clearly, for $j=k+1$ this is the statement of Proposition \ref{st} with $X_i'=X_i^{(k+1)}$, $i=1,\dots,k+1$, $Y=Y^{(k+1)}$ and $\xi=\xi_{k+1}$.

We begin with $j=1$. Let $T_1 = X_1\times\ldots\times X_{k+1}$ and $t_1 = |T_1| = t^{k+1}$.
For any sequence $(x_1, \ldots, x_{k+1})\in T_1$, using \eqref{claim1:N} with $J=\{x_1,\ldots,x_{k+1}\}$, there are at least $\eps n$ vertices in $N_G(x_1,\dots, x_{k+1})$.
Consider an auxiliary bipartite graph~$B$ between sequences $\seq{x}=(x_1,\dots, x_{k+1}) \in T_1$ and vertices $y_1\in V$, where an edge is drawn if $y_1\in N_G(x_1,\dots, x_{k+1})$. It is easy to show by a double counting argument that at least $\tfrac12\eps n$ vertices $y_1$ satisfy $deg_{B}(y_1)\ge\tfrac12\eps t_1$. Indeed, otherwise, we would have
\[
\eps t_1n\le |B|<\left(\tfrac12\eps n\right)\times t_1+ n\times \left(\tfrac12 \eps t_1\right)=\eps nt_1,
\]
a contradiction. Denote the set of such vertices by $Y_1$.

By the Pigeonhole Principle, there exists a subset $Y^{(1)}\subset Y_1$, $|Y^{(1)}|\ge \xi_1 n$, where $\xi_1=\tfrac12\eps /\binom{t_1}{\eps t_1/2}$, and a family $\cX_1\subseteq T_1$ of vectors $\seq{x}$, $|\cX_1|=\tfrac12\eps t_1$, such that for all $y_1\in Y^{(1)}$ and all $\seq{x}\in\cX_1$, we have $y_1\in N_G(\seq{x})$.

The family $\cX_1$ can be viewed as a $(k+1)$-partite $(k+1)$-uniform hypergraph.
Now we are going to apply Lemma~\ref{erkpar} to $\cH := \cX_1$ with $h:=k+1$, $q:=t^{(1)}$, and $w:=t$. To this end we choose
\[
t \ge \max\left\{ w_0(k+1,t^{(1)}), \left(\frac{2(3(k+1))^{k+1}}{\eps}\right)^{(t^{(1)})^k} \right\},
\]
where the second parameter guarantees that $\cX_1$ is large enough so as to satisfy the assumptions of Lemma~\ref{erkpar}.
Hence, $\cX_1$ contains a $(k+1)$-uniform clique $K^{(k+1)}_{k+1}(t^{(1)})$.
Let $X_i^{(1)}$, $i=1,\dots,k+1$, be the vertex classes of that clique.
This completes the proof of the base step $j=1$.

Now assume that~\eqref{prop:statement} is true for some $j$, $1\le j\le k$. We will deduce that it is also true for $j+1$.
For each sequence
$$\seq{y}=(y_1,\dots,y_j)\in Y^{(j)},$$
consider an auxiliary bipartite graph $B:=B(\seq{y})$ between sequences $(x_{j+1},\dots, x_{k+1})\in T_{j+1}$, where $T_{j+1} =  X_{j+1}^{(j)}\times\ldots\times X_{k+1}^{(j)}$, and vertices $y_{j+1}\in  V$, where an edge is drawn if $y_{j+1}\in N_G(x_{j+1},\dots, x_{k+1},y_1,\dots,y_j)$. Set $t_{j+1}=|T_{j+1}|=(t^{(j)})^{k+1-j}$.

Since, again by \eqref{claim1:N}, $|N_G(x_{j+1},\dots, x_{k+1},y_1,\dots,y_j)|\ge\eps n$, for all $x_i\in X_i^{(j)}$, $i=j+1,\dots, k+1$, the degree of $(x_{j+1},\dots, x_{k+1})$ in $B$ is at least $\eps n$. Thus, by a similar double counting argument as in case $j=1$,  there are at least $\frac{1}{2}\eps n$ vertices $y_{j+1}\in V$ with $deg_{B}(y_{j+1})\ge \tfrac12\eps t_{j+1}$. Denote the set of such vertices by $Y_{j+1}$.
Consequently, by the Pigeonhole Principle, there is a subset $Y_{j+1}'\subset Y_{j+1}$, $|Y'_{j+1}| = \xi'_{j+1} n$, for some $\xi_{j+1}'>0$, and a family  $\cX_{j+1}\subseteq T_{j+1}$ of vectors $\seq{x}=(x_{j+1},\dots,x_{k+1})$, $|\cX_{j+1}|=\tfrac12\eps t_{j+1}$, such that for all $y_{j+1}\in Y'_{j+1}$ and all $\seq{x}\in\cX_{j+1}$, we have $y_{j+1}\in N_G(x_{j+1},\dots, x_{k+1},y_1,\dots,y_j)$.

We apply Lemma~\ref{erkpar} to $\cX_{j+1}$ with $h:=k+1-j$, $q:=t^{(j+1)}$ and $w:=t^{(j)}$ obtaining, for $t^{(j)}$ sufficiently large with respect to $t^{(j+1)}$, that $\cX_{j+1}$ contains a clique $K(\seq{y}):=K^{(k+1-j)}_{k+1-j}(t^{(j+1)})$. (Note that for $j=k$ Lemma~\ref{erkpar} degenerates to singletons and we just take $K(\seq{y})=\cX_{k+1}$.)
Recall that $K(\seq{y})$ and $Y'_{j+1}:=Y'_{j+1}(\seq{y})$ depend on $\seq{y}=(y_1,\dots,y_j)$. Owing to the finiteness of $\cX_{j+1}$, we can still select a subset $\tilde Y^{(j)}\subset Y^{(j)}$ with $|\tilde Y^{(j)}|\ge \tilde\xi_jn^j$
and a clique $K\subseteq \cX_{j+1}$
such that for all $\seq{y}\in\tilde Y^{(j)}$, we have $K(\seq{y})=K$. Let $X_{j+1}^{(j+1)},\dots, X_{k+1}^{(j+1)}$ be the partition classes of~$K$. Additionally, let $X_1^{(j+1)} = X_1^{(j)},\dots, X_j^{(j+1)} = X_j^{(j)}$.
The sequence $X_1^{(j+1)},\dots, X_{k+1}^{(j+1)}$ together with the set
\[
Y^{(j+1)}=\{(y_1,\dots,y_{j+1}):\; (y_1,\dots,y_j)\in \tilde Y^{(j)},\; y_{j+1}\in Y'_{j+1}(y_1,\dots,y_j)\}
\]
and constant $\xi_{j+1}=\tilde\xi_j\times \xi'_{j+1}$,
satisfy~\eqref{prop:statement} for $j+1$.
Note that $|Y^{(j+1)}| \ge \xi_{j+1} n^{j+1}$.
This completes the inductive proof of~\eqref{prop:statement} and, thus, of Proposition \ref{st}.
\qed

\section{Connecting and Reservoir}\label{sec:connect}

Here we prove Lemma \ref{prop:reservoir}, but first we formulate  the Connecting Lemma which will be used inside the proof of Absorbing Lemma in the next section. Both lemmas proved in this section utilize yet another connecting lemma, Lemma~\ref{lem:dcon} below, proved as Lemma~4.1 in \cite{DRRS}, where, for convenience, \emph{$k$-walks} instead of $k$-paths are considered. Formally, by a \emph{$k$-walk} in a graph $G$ we mean a sequence of not necessarily distinct vertices but such that any $k+1$ consecutive vertices are distinct and form a clique in~$G$.

\begin{lemma}[\cite{DRRS}]\label{lem:dcon}
	For every integer $k\ge1$ and $\eps>0$ there exists some $\rho>0$ such that
	every $n$-vertex graph $G$ with $\delta(G)\geq (\tfrac{k}{k+1}+\eps)n$
	satisfies the following property.
	
	For all pairs of disjoint $k$-tuples $\seq{x}, \seq{x}'$ which induce cliques in $G$,
	the number of $k$-walks connecting~$\seq{x}$ and~$\seq{x}'$ with $\ell_k$ internal vertices
	is at least $\rho n^{\ell_k}$, where $\ell_k=(k+1)(2^{k+1}-2)$.
\end{lemma}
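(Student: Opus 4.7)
My plan is a two-sided extension-and-merge argument driven by the minimum-degree condition. By repeatedly applying Proposition~\ref{lem:scale}, the joint neighbourhood of any clique of size at most~$k$ has size at least $\bigl(\tfrac{1}{k+1}+k\eps\bigr)n$, so from any fixed ordered $k$-clique there are $\Omega(n)$ choices of vertex extending the clique to a $k$-walk that is longer by one. Iterating this extension yields at least $\Omega(n^s)$ ordered $k$-walks of length $s$ starting at~$\seq{x}$ and, symmetrically, ending at~$\seq{x}'$, each terminating in some ordered $k$-clique~$\seq{y}$.

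A $k$-walk connecting~$\seq{x}$ and $\seq{x}'$ with $L$ internal vertices factors through its state at any intermediate position, so writing $f_s(\seq{y})$ for the number of length-$s$ $k$-walks from~$\seq{x}$ to state~$\seq{y}$ and $g_{L-s}(\seq{y})$ for the number of length-$(L-s)$ $k$-walks from~$\seq{y}$ to~$\seq{x}'$, the count of connecting walks equals $\sum_{\seq{y}} f_s(\seq{y})\,g_{L-s}(\seq{y})$. Each sum $\sum f_s$ and $\sum g_{L-s}$ is at least $\Omega(n^s)$ and $\Omega(n^{L-s})$ respectively, and each is supported on the at most $n^k$ ordered $k$-cliques. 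A single Cauchy--Schwarz (pigeonhole) step then produces only $\Omega(n^{L-k})$ connecting walks, short of the target $\Omega(n^L)$ by a factor of $n^k$.

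The remedy is a recursive doubling scheme. At each phase one concatenates two partial walks along a shared intermediate $k$-clique and then re-extends the result on both sides by $k+1$ further vertices, using the $\Omega(n)$-joint-neighbourhood bound to recover the missing $n^k$ factor and boost the count back up. This leads to a recurrence $L_i = 2L_{i-1}+2(k+1)$ with initial value $L_1 = 2(k+1)$, whose closed form is $L_i = (2^{i+1}-2)(k+1)$, so after exactly $i = k$ phases the number of internal vertices reaches $\ell_k=(k+1)(2^{k+1}-2)$, matching the formula in the statement.

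The main obstacle is keeping the count of surviving walks linear in the relevant power of $n$ through each merge. Concretely, one must ensure that at every stage a positive fraction of the produced walks end at \emph{typical} $k$-cliques, i.e., at cliques that are simultaneously reachable by many walks from both sides; only then does the Cauchy--Schwarz step followed by re-extension yield the desired $\rho\, n^{L_i}$ bound rather than merely $\Omega(n^{L_i-k})$. Verifying this typicality at every phase is where the joint-neighbourhood bound from Proposition~\ref{lem:scale} is invoked repeatedly, and controlling the resulting chain of constants to land on a single $\rho>0$ at the end of the iteration is the most delicate book-keeping task in the argument.
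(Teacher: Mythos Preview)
The paper does not give its own proof of this lemma; it is quoted verbatim as Lemma~4.1 of~\cite{DRRS}. So there is nothing to compare against here, and your sketch has to stand on its own.

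Your outline assembles the right raw materials --- the joint-neighbourhood bound $|N(J)|\ge (\tfrac{1}{k+1}+k\eps)n$ from Proposition~\ref{lem:scale}, free one-step extensions, and a recurrence $L_i=2L_{i-1}+2(k+1)$ that indeed lands on $\ell_k$ after $k$ iterations --- but the central ``merge'' step is not actually justified, and this is a genuine gap rather than book-keeping. Writing the number of connecting $k$-walks as $\sum_{\seq{y}} f_s(\seq{y})\,g_{L-s}(\seq{y})$, knowing only that $\sum f_s\ge c n^s$ and $\sum g_{L-s}\ge c n^{L-s}$ gives \emph{nothing} for the product sum: if the supports of $f_s$ and $g_{L-s}$ are disjoint, the sum is zero. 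Cauchy--Schwarz does not rescue this without further structure (it would help if $f=g$, not here). So the claimed ``$\Omega(n^{L-k})$'' lower bound is unfounded, and the subsequent ``re-extension by $k+1$ on each side'' therefore has nothing to boost. Moreover, the endpoints $\seq{x},\seq{x}'$ are fixed, so once you have produced a walk between them you cannot ``extend on both sides''; and if instead the doubling is meant to be an induction on a statement of the form ``$W_{L_i}(\seq{a},\seq{b})\ge \rho_i n^{L_i}$ for all $\seq{a},\seq{b}$'', then the base case $L_1=2(k+1)$ is already the hard part and is never addressed.

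What is missing is exactly the mechanism that forces walks from $\seq{x}$ and walks towards $\seq{x}'$ to meet on a common $\Omega(n^k)$-sized set of intermediate states. In~\cite{DRRS} this is achieved not by a length-doubling scheme but by induction on~$k$, using~\eqref{claim1:d}: the induced graph $G[N(v)]$ satisfies $\delta(G[N(v)])\ge(\tfrac{k-1}{k}+\eps)|N(v)|$, so the inductive hypothesis supplies many $(k-1)$-walks inside common neighbourhoods, and these are threaded together into $k$-walks. That structural descent is what generates the $2^{k+1}$ in $\ell_k$, and it is the idea your sketch is still lacking.
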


The Connecting Lemma is, in a sense, a simpler version of Lemma~\ref{prop:reservoir}, where no reservoir set $R$ is put aside.

\begin{lemma}[Connecting Lemma]\label{CL}
For  every $\eps > 0$ there exists $\xi>0$ such that for sufficiently large $C=C(\eps,\xi)$, every $n$-vertex graph $G$ with $\delta(G) \geq \left(\frac{k}{k+1} + \eps\right)n$, and $p = p(n)\ge Cn^{-2/\l}$, a.a.s.\ $H = G \cup G(n,p)$ has the following property.

Let $m = k\l+r$, with $\l\ge r(r+1)\ge2$. For every subset $Z\subseteq V$ with $|Z|\le \xi n /(2(k+1))$ and every pair of disjoint $\xi$-connectable $m$-tuples $\seq{x},\seq{x}' $ which induce cliques in $G$, there exists an $m$-path connecting $\seq{x} $ and $\seq{x}' $ with $\l(k+1)2^{k+1}$ internal vertices, all from $V\setminus Z$.
\end{lemma}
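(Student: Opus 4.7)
The plan is to adapt the connecting argument from~\cite{DRRS} to the $m$-path setting with $m = k\l + r$ via Proposition~\ref{m-path decomposition}. Setting $t = 2^{k+1}$, the target number of internal vertices $\l(k+1) 2^{k+1}$ equals $\l(k+1)t$, and an $m$-path on $\l(k+1)t$ vertices embeds into $P^{k}_{(k+1)t}(\l) \dcup (k+1)B(\l, r, t)$. The overall strategy is therefore: (i) construct a $k$-walk ``template'' $W = (w_1, \ldots, w_{(k+1)t})$ of length $(k+1)\cdot 2^{k+1}$ in $G$, whose first and last few vertices interface with $\seq{x}$ and $\seq{x}'$ respectively; (ii) blow up each $w_i$ into a segment $U_i$ of $\l$ vertices using $G$-edges to supply the multipartite structure $P^{k}_{(k+1)t}(\l)$; (iii) complete the blow-up to an $m$-path by supplying the braid $(k+1)B(\l, r, t)$, whose edges will come from $G(n,p)$.

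For step~(i), we use the $\xi$-connectability of $\seq{x}$: among the $\xi n^{k+1}$ ordered $(k+1)$-cliques $\seq{y} = (y_1, \ldots, y_{k+1})$ in $G$ guaranteed by Definition~\ref{connectable_tuples}, we pick one disjoint from $Z \cup V(\seq{x}) \cup V(\seq{x}')$, which is possible because $|Z| + 2m \le \xi n/(2(k+1)) + O(1) \ll \xi n$. Similarly we select $\seq{y}'$ for $\seq{x}'$. These cliques serve as representatives of the segments of the blow-up adjacent to the ends: the adjacency pattern prescribed by $\xi$-connectability is exactly the set of $G$-edges needed at the interface of the blow-up with $\seq{x}$ (respectively $\seq{x}'$). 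Next, we apply Lemma~\ref{lem:dcon} to the $k$-cliques $(y_1, \ldots, y_k)$ and $(y'_k, \ldots, y'_1)$, obtaining $\rho n^{\ell_k}$ $k$-walks in $G$ with $\ell_k = (k+1)(2^{k+1} - 2)$ internal vertices. Choosing one such walk whose internal vertices avoid $Z$ and all previously chosen vertices, then concatenating with $\seq{y}$ in reversed order at the front and $\seq{y}'$ at the back, yields $W$ as a $k$-walk of length $2(k+1) + \ell_k = (k+1) 2^{k+1}$ in $G$.

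For steps~(ii) and~(iii), once $W$ is fixed we blow up each $w_i$ to a segment $U_i$ of $\l$ vertices chosen from the joint $G$-neighborhood of the at most $2k$ template vertices $w_j$ with $|i-j|\le k$, iteratively applying Proposition~\ref{lem:scale} so that the relevant joint neighborhoods remain of size $\Omega(n)$; the first few and last few segments are forced to coincide with the segment structure of $\seq{x}$ and $\seq{x}'$. Since $|Z|$ is negligibly small, we can avoid $Z$ throughout. The remaining edges needed to upgrade $P^{k}_{(k+1)t}(\l)$ to an $m$-path are precisely those of a copy of $B := (k+1)B(\l, r, t)$ on the chosen segments, and these come from $G(n,p)$. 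We apply Proposition~\ref{Janson} to the family $\ccF$ of copies of $B$ compatibly placed on the candidate segments, whose size is $\Omega(n^{v_B})$; the Janson-type bound then gives failure probability exponentially small in $Cn$, and a union bound over the polynomially many choices of $\seq{x}, \seq{x}', Z$ (and of the template) delivers the a.a.s.\ conclusion provided $C$ is chosen large enough depending on $\eps$ and $\xi$.

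The main obstacle is reconciling the fixed $K_m$-ends $\seq{x}, \seq{x}'$ with the flexible blown-up interior while guaranteeing that the random braid furnished by Janson's inequality attaches correctly to both ends. The $\xi$-connectability condition is tailored precisely to this interface: it provides the $G$-adjacencies needed for the first few segments of the blow-up to be compatible simultaneously with the template $W$ and with $\seq{x}$ (respectively $\seq{x}'$), and it ensures that the family of candidate braid placements one feeds into Proposition~\ref{Janson} is of the required size $\Omega(n^{v_B})$.
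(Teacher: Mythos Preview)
There are two genuine gaps.

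The first and fatal one is in step~(ii). To obtain an $m$-path via Proposition~\ref{m-path decomposition} you need the full $\ell$-blow-up $P^k_L(\ell)$ in $G$: every vertex of $U_i$ must be $G$-adjacent to every vertex of $U_j$ whenever $|i-j|\le k$. Your greedy construction only puts each vertex of $U_i$ into the joint neighbourhood of the \emph{template} vertices $w_j$ with $|i-j|\le k$; it gives no cross-edges between $U_i$ and $U_{i\pm1},\dots,U_{i\pm k}$. Securing those cross-edges iteratively would require common neighbourhoods of $k\ell$ already-placed vertices, and Proposition~\ref{lem:scale} only controls joint neighbourhoods of at most $k+1$ vertices (for $\ell\ge2$ this is insufficient). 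The paper avoids this by first counting $\Omega(n^L)$ internal $k$-paths (not a single walk) avoiding $Z$, and then invoking Corollary~\ref{blow} to obtain $\Omega(n^{\ell L})$ genuine copies of $P^k_L(\ell)$; each such copy then determines a canonical braid $B=(k+1)B(\ell,r,2^{k+1})$ to be sought in $G(n,p)$. This supersaturation-to-blow-up step is the missing idea in your argument; without it the deterministic scaffolding for the $m$-path simply is not there, and supplying the random braid on top of your $U_i$'s does not yield $P^m_{\ell L}$.

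The second gap is in the final union bound. The set $Z$ ranges over all subsets of $V$ of size at most $\xi n/(2(k+1))$, which is $2^{\Theta(n)}$ choices, not polynomially many. The Janson bound from Proposition~\ref{Janson} is $\exp\{-\tau^2 cCn\}$, which does beat $2^n$ once $C$ is large enough; so the conclusion is salvageable, but your stated reasoning (``polynomially many choices of $\seq{x},\seq{x}',Z$'') is incorrect and must be replaced by an exponential union bound.
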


\begin{proof}
Let $\rho$ and $\ell_k$ be as in Lemma~\ref{lem:dcon}.
Choose $\xi \le \rho/(2(2^{k+1}-2))$.
Let $\seq{x} = (x_1,\ldots,x_m)$, $\seq{x}' = (x_1',\ldots,x_m')$ be $\xi$-connectable $m$-tuples. Fix $Z\subseteq V$ with $|Z|\le \xi n /(2(k+1))$ and put $L = \ell_k+2(k+1) = (k+1)2^{k+1}$. We will first show that there are $\Omega(n^L)$ $k$-walks  in $G$  with $L$ internal vertices, all avoiding $Z$, that connect $\seq{x}$ to $\seq{x}'$. (Formally, we connect the last $k$ vertices of $\seq{x}$ with the last $k$ vertices of $\seq{x}'$, so, with some abuse of terminology,  internal vertices are precisely those which are disjoint from the set $\{x_1,\ldots,x_m,x_1',\ldots,x_m'\}$.)

Indeed, consider ordered $(k+1)$-cliques $\seq{y} = (y_1,\ldots,y_{k+1}$), $\seq{y}'=(y_1',\ldots,y_{k+1}')$, as in Definition~\ref{connectable_tuples}, corresponding, respectively, to $\seq{x}$, $\seq{x}'$ which are disjoint from $Z$.
There are at least
\[
\left(\xi n^{k+1} - (k+1) |Z| n^{k} \right)^2 \ge \frac{1}{4}\xi^2 n^{2k+2},
\]
of them, since $|Z| \le \xi n /(2(k+1))$.

By Lemma~\ref{lem:dcon}, applied to the $k$-tuples $\seq{y}_{-}=(y_2,\dots,y_{k+1})$ and $\seq{y}'_{-}=(y_{2}',\dots, y_{k+1}')$, there exist
\[
\rho n^{\ell_k} - \ell_k (|Z| + 2m) n^{\ell_k-1} \ge  \frac{1}{2} \rho n^{\ell_k},
\]
$k$-walks connecting $\seq{y}_{-}$ and $\seq{y}'_{-}$, with $\ell_k$ internal vertices, all omitting~$Z$, $\seq{x}$, and $\seq{x}'$. Thus, altogether we have $\tfrac18\xi^2\rho n^L$ $k$-walks connecting $\seq{x}$ to $\seq{x}'$,  with $L$ internal vertices, all omitting~$Z$. Consequently, at least
\[
\frac18\xi^2\rho n^L-O(n^{L-1})\ge\frac1{10}\xi^2\rho n^L
\]
of them are $k$-paths. Let $\cP$ be the family of all such $k$-paths and $\cP_{int}$ the family of the sub-$k$-paths of the $k$-paths in $\cP$ spanned by  the $L$ internal vertices.

By Corollary \ref{blow} with $d=\frac1{10}\xi^2\rho$, $F=P^k_L$, $G=G[V\setminus Z]$, and $\cF=\cP_{int}$, for some $\tau'=\tau'(d)>0$, there are at least $\tau' n^{\l L}$ copies $P'(\ell)$ of the $\l$-blow-up $P^k_L(\ell)$ with $\cF(P'(\ell))\subseteq\cP_{int}$. We select from them at least $\tau n^{\l L}$ copies which have mutually distinct vertex sets, where $\tau=\tfrac{\tau'}{(\ell L)!}$. 
Let us consider a sequence of vertices $\seq{v}$ that begins with $\seq{x}$, ends with the reverse of $\seq{x}'$, and in between consists of the $\l L$ vertices of $P'(\ell)$ (the order in each $\l$-independent set obtained by the blow-up is fixed arbitrarily).

Notice that due to the choice of $\seq{y}$ and $\seq{y}'$, and the inclusion $\cF(P'(\ell))\subseteq\cP_{int}$, each vertex of $\seq{x}$ is already connected to the $m$ subsequent vertices of $\seq{v}$ and the same is true for $\seq{x}'$.
 Indeed, split the vector $\seq{x}$ into $k$ blocks of length $\l$ and  one block of length $r$. Then, each $x_i$ in the $j$th block, $j=1,\dots,k+1$, is adjacent to $m-i\ge m-j\ell$ elements lying in front of it in $\seq{x}$ plus $j\ell$ elements in the $\l$-blow-ups of the first $j$ elements of $\seq{y}$ (see Def.~\ref{connectable_tuples}). Thus, although the sequence $\seq{v}$ does not yet induce a full $m$-path, the only missing edges have all their vertices in $P'(\ell)$.

By Proposition~\ref{m-path decomposition}, we need to complement $P'(\ell)$ with a copy of $B = (k+1)B(\l,r,2^{k+1})$ in $G(n,p)$. For each $P'(\ell)$ let $B_{P'(\ell)}$ be the copy of $B$ which complements $P'(\ell)$ to a graph containing an $m$-path and let $\cB$ be the family of all such $B_{P'(\ell)}$.
 By Remark \ref{1-1}, we have $|\cB| \ge\tau n^{\l L}$. By Proposition~\ref{Janson}, there  exists $c=c_B>0$ such that with probability at least $1-\exp\{-\tau^2cCn\}$, at least one of them is present in $G(n,p)$, which yields the existence of an $m$-path connecting $\seq{x}$ and $\seq{x}'$ in $G\cup G(n,p)$ which avoids $Z$.
 As there are at most $n^{m}$ possibilities for the choice of each of~$\seq{x}$ and $ \seq{x}'$ and at most $2^n$ for $Z$, applying the union bound and taking $C=C(\tau,c)$ large enough, we conclude that a.a.s. the same it true for all choices of $Z$, $\seq{x}$, and $\seq{x}'$.
\end{proof}

For the proof of Lemma~\ref{prop:reservoir}, we need a modification of the notion of connectability.
\begin{dfn}\label{R-connectable_tuples}
Given $k\ge1$ and $\xi>0$, and a set $R$, an $m$-tuple $(x_1, x_2,\ldots,x_m)$ of vertices of a graph $G-R$ is \textit{$(R,\xi)$-connectable} if there exist $\xi |R|^{k+1}$  (ordered) copies $(y_1, y_2,\ldots,y_{k+1})$ of $K_{k+1}$ in $G[R]$ with the property that for each $i = 1, 2,\ldots,k+1$, $y_i\in N_G(x_{(i-1)l+1},\ldots,x_m)$.
\end{dfn}

\begin{proof}[Proof of Lemma~\ref{prop:reservoir}] Fix $\eps>0$ and $0<\xi<1$ and let $\rho:=\rho(\eps/2)$ be given by Lemma~\ref{lem:dcon}. Choose
\begin{equation}\label{gaxiro}
\gamma=\min\{\xi^2/{2^{2k+6}},\rho^2/4\}.
\end{equation}
Consider a subset $R\subseteq V$ chosen at
random by including each element of $V$ to $R$, independently, with probability $\gamma^2$. It is easy to see that a.a.s.\ $R$ satisfies the following three properties:
\begin{enumerate}[label=\rmlabel]
\item $\tfrac12\gamma^2n\le|R|\le2\gamma^2n$,
\item\label{enum:Rmindeg} $|N_G(v)\cap R|\ge \left(\frac k{k+1}+\frac\eps2\right)|R|$ for every $v\in V$, and
\item every $\xi$-connectable $m$-tuple in $G$ becomes $(R,\xi/2^{k+2})$-connectable.
\end{enumerate}
Indeed, $X=|R|$ is binomially distributed with $\E X=\gamma^2 n$, so the first property follows from Chebyshev's inequality.
Since $X_v=|N_G(v)\cap R|$ is also binomial with expectation $\gamma^2|N_G(v)|\ge \gamma^2(\tfrac k{k+1}+\eps)n$, the second property holds, simultaneously for all $v$, from Chernoff's
bound (see, e.g., \cite[Theorem 2.1]{JLR}).

To prove (iii), we employ a standard application of Janson's inequality (see, e.g., \cite[Theorem 2.14]{JLR}).
Given a  $\xi$-connectable $m$-tuple $\seq{x}$ in $G$, let $\cK$ be the family of $\xi n^{k+1}$ ordered copies of $K_{k+1}$ which witness the $\xi$-connectability  of $\seq{x}$. Let $Y:=Y_{\seq{x}}$ be the number of $K\in\cK$ which are contained in $R$. We apply the inequality in \cite[Theorem 2.14]{JLR} to $Y$ with $t:=\tfrac13\E Y$. Observe that $\E Y=\xi n^{k+1}\gamma^{2(k+1)}$, while $\overline{\Delta}=O(n^{2k+1})$.
Hence, $\PP(Y\le\tfrac23\E Y)\le\exp\{-\Omega(n)\}.$ This is so small that a.a.s.\ for all choices of $\seq{x}$ we have
\[
Y_{\seq{x}}\ge\frac23\xi\gamma^{2(k+1)}n^{k+1}\ge \frac1{2^{k+2}}\xi|R|^{k+1},
\]
where we also used the R-H-S of (i).
	
For the rest of the proof of Lemma~\ref{prop:reservoir} we fix one set $R\subseteq V$ having the above three properties. Let us now fix two ordered $\xi$-connectable $m$-tuples $\seq{x}, \seq{x}'$ in~$G-R$ as well as a subset $S\subseteq R$ with $|S|\le \sqrt\gamma|R|$.
	%
	%
We are going to show that with  probability very close to one, there is an $m$-path in $H$ connecting~$\seq{x}$ and $\seq{x}'$ with~$\l(k+1)2^{k+1}$ internal vertices, all from $R\setminus S$.

To this end, note that due to  property (iii) of $R$, sequences $\seq{x}$ and $\seq{x}'$ are  $(R,\xi/2^{k+2})$-connectable.
Hence, one can extend $\seq{x}$ to an $m$-path~$\seq{x}\,\seq{y}$, where $\seq{y}$ is a $(k+1)$-tuple in $G[R\sm S]$ which `witnesses' the $(R,\xi)$-connectability of $\seq{x}$, in at least
\[
\frac1{2^{k+2}}\xi|R|^{k+1}-\sqrt\gamma|R|^{k+1}\overset{(\ref{gaxiro})}{\ge}\frac1{2^{k+3}}\xi|R|^{k+1}
\]
ways.
We  extend $\seq{x}'$ to $\seq{x}'\seq{y}'$ in a similar way.

In turn, by~\ref{enum:Rmindeg}, we are in position to apply Lemma~\ref{lem:dcon}. Recalling that $\ell_k = (k+1)(2^{k+1}-2)$, 
we obtain  at least
\[
\rho |R|^{\ell_k}-\sqrt\gamma|R|^{\l_k}\overset{(\ref{gaxiro})}{\ge}\frac12\rho|R|^{\ell_k}
\]
$k$-walks connecting $\seq{y}_{-}$ and $\seq{y}'_{-}$, with $\ell_k$ internal vertices, all belonging to $R\setminus S$.   Thus, altogether we have at least $2^{-(2k+7)}\xi^2\rho |R|^L$ $k$-walks connecting $\seq{x}$ to $\seq{x}'$,  with $L=(k+1)2^{k+1}$ internal vertices, all belonging to $R\setminus S$. Consequently, at least $2^{-(2k+8)}\xi^2\rho |R|^L$ of them are $k$-paths.

Let $\cP$ be the family of all such $k$-paths and $\cP_{int}$ -- the family of the sub-$k$-paths of the $k$-paths in $\cP$ spanned by  the $L$ internal vertices.
Similarly as in the proof of Lemma \ref{CL}, by Corollary \ref{blow} with $d=2^{-(2k+8)}\xi^2\rho$, $F=P^k_L$, $G=G[R\setminus S]$, and $\cF=\cP_{int}$, for some $\tau=\tau(d)>0$, there are at least $\tau |R|^{\l L}$
copies $P'(\ell)$ of the $\l$-blow-up $P^k_L(\ell)$ with $\cF(P'(\ell))\subseteq\cP_{int}$ and mutually distinct vertex sets.
As in the previous proof, each such copy misses a copy of $B=(k+1)B(\ell,r,2^{k+1})$ to close an $m$-path between $\seq{x}$ and $ \seq{x}'$.

By Proposition~\ref{Janson}, using also the L-H-S of (i), there exists $c=c_B>0$ such that with probability at least
\[
1-\exp\{-\tau^2cC|R|\}\ge1-\exp\{-\tau^2cC\gamma^2n/2\},
\]
at least one of them is present in $G(n,p)$. This yields the existence  in $G\cup G(n,p)$ of an $m$-path connecting $\seq{x}$ and $\seq{x}'$,  with~$\l L$ internal vertices, all from $R\setminus S$.
As there are at most $n^{m}$ possibilities for the choice of each of~$\seq{x}$ and $ \seq{x}'$ and at most $2^n$ for $S$, applying the union bound and taking $C:=C(\tau,\gamma,c)$ large enough, we conclude that a.a.s.\ the same is true for all choices of $S\subseteq R$, $\seq{x}$, and $\seq{x}'$.

\end{proof}

\section{Absorbing Path}\label{sec:absorb}

We build the absorbing path $A$ from small  blocks, called absorbers.

\begin{dfn}
Given $\xi>0$,  a graph $G$, and a vertex $v\in V:=V(G)$, a $2m$-tuple $(x_m,x_{m-1},\ldots,x_1,x_1',\ldots,x_{m-1}',x_m')\in V^{2m} $ is a \textit{half-$(\xi,v)$-absorber} in $G$ if
\begin{enumerate}[label=\rmlabel]
\item $x_1,x_2,\ldots,x_m,x_1',x_2',\ldots,x_m' \in N_G(v)$;
\item $\seq{x}=(x_1,x_2,\ldots,x_m)$, $\seq{x}'=(x_1',x_2',\ldots,x_m')$ are $\xi$-connectable in $G$;
\item\label{halfabsorber} $(x_m,x_{m-1},\ldots,x_1,x_1',\ldots,x_{m-1}',x_m')$ induces  in~$G$ an $(r,\l,\ldots,\l,r)$-blow-up of a~$P_{2k+2}^k$.
\end{enumerate}
If condition \ref{halfabsorber} is replaced by
\begin{enumerate}
\item[\textit{(iii)$'$}]\label{fullabsorber} $(x_m,x_{m-1},\ldots,x_1,x_1',\ldots,x_{m-1}',x_m')$ induces  in $H=G\cup G(n,p)$ an $m$-path,
\end{enumerate}
then we call the $2m$-tuple $(x_m,x_{m-1},\ldots,x_1,x_1',\ldots,x_{m-1}',x_m')$ a \textit{(full) $(\xi,v)$-absorber}.
A~$2m$-tuple which is a $(\xi,v)$-absorber for some $v\in V$ is called a \textit{$\xi$-absorber}.
\end{dfn}
 The key observation is that if $(x_m,x_{m-1},\ldots,x_1,x_1',\ldots,x_{m-1}',x_m')$ is a $(\xi,v)$-absorber, then $(x_m,x_{m-1},\ldots,x_1,v,x_1',\ldots,x_{m-1}',x_m')$ is an $m$-path (here we just need properties (i) and (iii)$'$,  not (ii)). This allows
 for including (or absorbing)  $v$ into a path or cycle which contains a $(\xi,v)$-absorber as a segment. To absorb an entire subset $U$ of vertices, we will need many disjoint $(\xi,u)$-absorbers for each $u\in U$. In fact, by a simply greedy argument, at least $|U|$ disjoint $(\xi,u)$-absorbers per each vertex $u$ would suffice.

The next result asserts that for some $\xi>0$ there are many half-$(\xi,v)$-absorbers for every $v\in V(G)$.

\begin{prop}\label{prop:half-v-absorbers}
For every $\eps>0$ there exist $\xi>0$ and $\beta'>0$ such that if $G$ is an $n$-vertex graph with $\delta(G)\geq (\tfrac{k}{k+1}+\eps)n$,
then, for every $v\in V(G)$, there are at least $\beta' n^{2m}$ half-$(\xi,v)$-absorbers.
\end{prop}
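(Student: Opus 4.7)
My plan is to find many half-$(\xi,v)$-absorbers by first locating a suitable blow-up structure inside $N:=N_G(v)$ and then invoking Proposition~\ref{st} to certify $\xi$-connectability of the two halves. By Proposition~\ref{lem:scale} with $j=1$, we have $|N|\ge(\tfrac{k}{k+1}+\eps)n$ and $\delta(G[N])\ge(\tfrac{k-1}{k}+\eps)|N|$. Since $\chi(P^k_{2k+2})=k+1$ (via the coloring $i\mapsto i\bmod(k+1)$), the edge density of $G[N]$ exceeds the supersaturation threshold $\tfrac{k-1}{k}$ required by Lemma~\ref{super}, yielding $\Omega(|N|^{2k+2})$ copies of $P^k_{2k+2}$ in $G[N]$. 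Taking $q$ at least the parameter $t=t(k,\eps,\ell)$ produced by Proposition~\ref{st} with $s:=\ell$, Corollary~\ref{blow} then upgrades these to $\Omega(n^{q(2k+2)})$ copies of the $q$-blow-up $P^k_{2k+2}(q)$ in $G[N]$. Each such copy supplies disjoint sets $Q_1,\dots,Q_{2k+2}\subseteq N$ of size~$q$ with $Q_i$ and $Q_j$ completely joined in $G$ whenever $|i-j|\le k$.

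Next, to each such copy I would apply Proposition~\ref{st} in $G$ twice: once to the ordered family $(Q_{k+1},Q_k,\dots,Q_1)$ and once to $(Q_{k+2},Q_{k+3},\dots,Q_{2k+2})$. Each application yields subsets $Q_i^{*}\subseteq Q_i$ of size~$\ell$ and a constant $\xi_0>0$ such that, by Remark~\ref{rema}, any sequence formed by $\ell$ vertices from each of $Q_{k+1}^{*},\dots,Q_2^{*}$ followed by $r$ vertices from $Q_1^{*}$, arranged in the order of Definition~\ref{connectable_tuples}, is a $\xi_0$-connectable $m$-tuple~$\seq{x}$ in~$G$; and analogously for $\seq{x}'$ built from $Q_{k+2}^{*},\dots,Q_{2k+2}^{*}$. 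Setting $\xi:=\xi_0$, the resulting $2m$-tuple $(x_m,\dots,x_1,x_1',\dots,x_m')$ satisfies~(i) since all its vertices lie in~$N$, (ii) by the choice of~$\xi$, and (iii) because the inclusions $Q_i^{*}\subseteq Q_i$ inherit the required complete bipartite structure between $Q_i^{*}$ and $Q_j^{*}$ (for $|i-j|\le k$) from the $P^k_{2k+2}(q)$-copy.

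A double-counting argument then completes the proof: each half-absorber occupies $2m$ specific vertices, the number of $P^k_{2k+2}(q)$-copies extending a given one (by enlarging each of its parts to size~$q$) is at most $O(n^{q(2k+2)-2m})$, and each copy yields at least a constant number of distinct half-absorbers through the orderings within the $Q_i^{*}$'s. Dividing yields at least $\Omega(n^{q(2k+2)})/O(n^{q(2k+2)-2m})=\Omega(n^{2m})$ distinct half-$(\xi,v)$-absorbers, proving the bound with a positive $\beta'$ depending only on $k,\eps,\ell,r$. The main obstacle will be the bookkeeping of parameters -- ensuring $q$ is large enough for Proposition~\ref{st} to apply inside each part of the blow-up, and that the double-count actually recovers the correct order $n^{2m}$ rather than being spoiled by the multiplicities coming from the $q-\ell$ and $q-r$ extra vertices per part.
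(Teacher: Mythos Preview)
Your proposal is correct and follows essentially the same approach as the paper. The one minor difference is that the paper applies Lemma~\ref{super} directly to the blow-up $P^k_{2k+2}(t)$ (noting that $\chi(P^k_{2k+2}(t))=\chi(P^k_{2k+2})=k+1$), whereas you first obtain many copies of $P^k_{2k+2}$ and then invoke Corollary~\ref{blow} to pass to the blow-up; both routes yield $\Omega(n^{(2k+2)t})$ copies of $P^k_{2k+2}(t)$ in $N_G(v)$, and from there the two arguments coincide.
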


\begin{proof} Fix $\eps>0$. Let $\beta$ be given by Lemma~\ref{super}.
We are also going to apply Proposition~\ref{st} with $s:=\l$; let $t$ and $\xi$ be the resulting constants. Finally, let
\[
\beta'=\beta(\tfrac{k}{k+1}+\eps)^{(2k+2)t}.
\]

By \eqref{claim1:d}, for every $v$,
\[
\delta(G[N(v)])\ge\left(\frac{k-1}k+\eps\right)|N(v)|
\]
which implies that
\[
e(G[N(v)])\ge\left(\frac{k-1}k+\eps\right)\binom{|N(v)|}2.
\]
Since $\chi(P_{2k+2}^k)=k+1$, we also have $\chi(P_{2k+2}^k(t))=k+1$, where $P_{2k+2}^k(t)$ is the $t$-blow-up of the $k$-path $P_{2k+2}^k$ on $2k+2$ vertices.
Thus, by Lemma~\ref{super},  $G[N(v)]$ contains at least
\[
\beta |N(v)|^{(2k+2)t}\ge\beta' n^{(2k+2)t}
\]
copies of $P_{2k+2}^k(t)$.  Fix one such copy and let $X_{k+1}, \dots, X_1, \bar X_1, \dots, \bar X_{k+1}$ be its vertex classes. By two applications of Proposition~\ref{st} (with $s=\ell$), one to $X_{k+1}, \dots, X_{1}$, the other to $\bar X_1, \dots, \bar X_{k+1}$, we obtain subsets $X_{k+1}',\dots, X_{1}', \bar X_1', \dots, \bar X_{k+1}'\subseteq V$  and two sets of $(k+1)$-tuples $Y,\bar Y\subseteq V^{k+1}$ such that
 \begin{enumerate}
 \item $|X_i'| = |\bar X_i'| =\ell$ for $i=1,\dots,k$ while $|X_{k+1}'| = |\bar X_{k+1}'| = r$ (we delete arbitrary $\l-r$ vertices from the $(k+1)$-st subset guaranteed by Proposition~\ref{st});
 \item $|Y|,|\bar Y|\ge\xi n^{k+1}$;
 \item every $(x_1,\ldots,x_{k+1})\in X_1'\times\ldots\times X_{k+1}'$ interlaces with every $(y_1,\ldots,y_{k+1})\in Y$ as well as every $(\bar x_1,\ldots,\bar x_{k+1})\in \bar X_1'\times\ldots\times\bar X_{k+1}'$ interlaces with every $(\bar y_1,\ldots,\bar y_{k+1})\in \bar Y$.
 \end{enumerate}

To finish the proof, consider first an $m$-tuple $\seq{x}$ consisting of all the vertices of $X_1',\ldots,X_{k+1}'$, in this order.
By Proposition \ref{st} (see also Remark \ref{rema}),
 $\seq{x}$ is $\xi$-connectable.
By the same token, the  sequence  $\seq{x}'$ listing all the elements in sets $\bar X_1',\ldots,\bar X_{k+1}'$ is a $\xi$-connectable $m$-tuple. Hence, $(\seq{x})^{-1}\seq{x}'$ is a half-$(\xi,v)$-absorber. In summary, each of the  $\beta' n^{(2k+2)t}$  $t$-blow-ups of $P_{2k+2}^k$ generates  a half-$(\xi,v)$-absorber. On the other hand, each of the half-$(\xi,v)$-absorbers can be generated by  at most $n^{(2k+2)t - 2m}$ such blow-ups. Thus, the assertion follows by taking the ratio of the two quantities.
\end{proof}

Next, we analyze what is needed in order to get an $m$-path as in \textit{(iii)'} starting from a blow-up appearing in \textit{(iii)}.
Let $B:=(k+1)B(\l,r,2)$ and let $B^-$ be the graph consisting of a copy of $(k-1)B({\l},r,2)$ and two vertex disjoint copies of the disjoint union of $K_{\l}$ and $K_r$ joined by an $r$-bridge, that is, $B^-$ is obtained from $B$ by removing $\l-r$ vertices from two cliques $K_{\l}$ belonging to distinct copies of $B(\l,r,2)$.
Given $v$ and a half-$(\xi,v)$-absorber $\seq{x}$, there is a  copy  of $B^-$ which, if included in $G(n,p)$, completes in $H$ a $(\xi,v)$-absorber on  $\seq{x}$.

Let $X$ be a random variable which counts the number of copies of $B^-$ in $G(n,p)$  and, for any vertex $v$, let $X_v$ be the number of those of them which turn a half-$(\xi,v)$-absorber into a full $(\xi,v)$-absorber.
Notice that the number of vertices of $B^-$ is $2m$ and the number of edges is $2k\binom{\l}{2} + 2\binom{r}{2} + (k+1)\binom{r+1}{2}$. Thus, putting
\[
\Psi :=\Psi_{B^-}= n^{2m} p^{2k\binom{\l}2 + 2\binom{r}2 + (k+1)\binom{r+1}2},
\]
 we have $\E X\le \Psi$ and $\E X_v \geq \beta'\Psi$ (cf. Proposition~\ref{prop:half-v-absorbers}). Finally, let $Y$ be the number of  intersecting pairs of copies of $B^-$ in $G(n,p)$.

\begin{prop}\label{prop:absorbers}
Let  $\beta'$ be as in Proposition \ref{prop:half-v-absorbers} and $p = p(n) \ge Cn^{-2/\l}$ for sufficiently large constant $C\ge1$. There exists a constant $D:=D(k,r,\l)$ such that the following properties hold a.a.s.
\begin{enumerate}[label=\rmlabel]
\item $X \leq 2\Psi$;
\item $Y\le D\Psi^2/n$;
\item for each $v\in V(G)$, $X_v \geq \frac12 \beta' \Psi$.
\end{enumerate}
\end{prop}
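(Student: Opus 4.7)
The plan is to control each of $X$, $Y$, $X_v$ by first computing its expectation and then applying an appropriate concentration inequality. The key structural input is that $B^-$ is a vertex-induced subgraph of $B=(k+1)B(\l,r,2)$, so every subgraph of $B^-$ is also a subgraph of $B$, and the calculation from the proof of Proposition~\ref{Janson} gives $m_{B^-}\le m_B=\l/2$. Combined with $p\ge Cn^{-2/\l}$ and $C\ge 1$, this yields $n^{v_H}p^{e_H}\ge n$ for every non-empty subgraph $H\subseteq B^-$; in particular $\Psi\ge C^{e_{B^-}}n\to\infty$.

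For part (i), I would apply Chebyshev's inequality. The standard variance bound for subgraph counts gives
\[
\Var(X)\le \E X + \sum_{\substack{H\subseteq B^-\\ e_H\ge 1}} O\!\left(n^{2v_{B^-}-v_H}p^{2e_{B^-}-e_H}\right) = O\!\left(\Psi+\frac{\Psi^2}{n}\right),
\]
since each term in the sum equals $O\bigl(\Psi^2/(n^{v_H}p^{e_H})\bigr)=O(\Psi^2/n)$ and only finitely many shapes $H\subseteq B^-$ occur. As $\Psi\ge Cn$, the first term is dominated by the second, so $\Var(X)=O(\Psi^2/n)$. Chebyshev's inequality now yields $\PP(X>2\Psi)\le \PP(|X-\E X|\ge \Psi/2)=O(1/n)=o(1)$.

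For part (iii), I would apply Proposition~\ref{Janson} with $F=B^-$, which is a subgraph of $B$ containing $K_\l$. For each $v\in V$, Proposition~\ref{prop:half-v-absorbers} provides a family $\ccF_v$ of at least $\beta' n^{v_{B^-}}$ copies of $B^-$ in $K_n$ whose presence in $G(n,p)$ completes a half-$(\xi,v)$-absorber to a full one. With $\tau=\beta'$, Proposition~\ref{Janson} yields
\[
\PP\!\left(X_v\le \tfrac12\beta'\Psi\right)\le\exp\{-\beta'^2 c_{B^-}Cn\},
\]
which is super-exponentially small; a union bound over the $n$ vertices $v$ then gives (iii) a.a.s.

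The main obstacle is part (ii): one easily obtains $\E Y\le D'\Psi^2/n$ from the same overlap-graph sum used for $\Var(X)$, but Markov's inequality alone only delivers constant failure probability when $D$ is required to be a constant. My plan is to decompose $Y=\sum_H Y_H$ over the finitely many non-empty overlap patterns $H\subseteq B^-$, where $Y_H$ counts ordered pairs of copies of $B^-$ in $G(n,p)$ meeting in exactly a copy of $H$. For each such $H$, $Y_H$ is a sum of indicators for copies of the graph $F_H$ (two copies of $B^-$ identified along $H$) in $G(n,p)$, with $\E Y_H=\Theta(\Psi^2/n)\to\infty$. A Chebyshev argument analogous to that for $X$ --- again using $m_{B^-}\le \l/2$ to bound $\Var(Y_H)$ by a sum over overlap patterns of pairs of copies of $F_H$ --- gives $Y_H=(1+o(1))\E Y_H$ a.a.s., and summing over the bounded number of shapes $H$ and choosing $D=D(k,r,\l)$ sufficiently large establishes (ii).
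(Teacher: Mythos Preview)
Your treatment of parts (i) and (iii) is essentially identical to the paper's: Chebyshev for $X$ using $\Phi_{B^-}\ge Cn$, and Proposition~\ref{Janson} plus a union bound for each $X_v$.

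For part (ii) the high-level strategy also matches --- decompose $Y$ by overlap type and bound each piece --- but the paper simply invokes inequality (3.22) from~\cite{JLR}, which asserts directly that for every union $F$ of two intersecting copies of $B^-$ one has $X_F\le D_F\Psi^2/\Phi^v_{B^-}$ a.a.s., and then sums over the finitely many shapes. Your attempt to reprove this via Chebyshev on each $Y_H$ has two imprecisions. First, $\E Y_H=\Theta(\Psi^2/\Psi_H)$ is only $O(\Psi^2/n)$, not $\Theta$, since $\Psi_H\ge n$ with equality only for the densest overlaps (essentially $H=K_\ell$ at the threshold); for all other $H$ Markov already gives $Y_H=o(\Psi^2/n)$ a.a.s., and Chebyshev is neither needed nor claimed to apply. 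Second, and more seriously, when Chebyshev \emph{is} needed you require $\Phi_{F_H}\to\infty$, i.e.\ $\Psi_J\to\infty$ for every subgraph $J\subseteq F_H$. But such $J$ are not subgraphs of $B^-$ --- they are unions of two subgraphs of $B^-$ glued along a piece of $H$ --- so the bound $m_{B^-}\le\ell/2$ does not by itself control $\Psi_J$. (For instance, with $\ell=2$ and $H$ consisting of two isolated vertices on the same path of $B^-$, the glued graph $F_H$ contains a cycle $J$ with $\Psi_J$ bounded at $p=Cn^{-1}$.) Establishing the required bound in the remaining critical case is exactly the content of the cited inequality from~\cite{JLR}; your sketch points in the right direction but does not close this gap.
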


\begin{proof}
Part (i): Since $B^-$ is a subgraph of $B$ containing $K_{\l}$, by the proof of Proposition~\ref{Janson}, $\Phi_{B^-} = \Psi_{K_{\l}} = n^{\l}p^{\binom{\l}2}\ge Cn$.
By Chebyshev's inequality
\[
\PP(X \ge 2\Psi) \le \PP(X \ge 2\E X) \le \PP(|X-\E X| \ge \E X)
\le \frac{\Var X}{(\E X)^2}=O(1/\Phi_{B^-})=o(1)
\]
(see the proof of Theorem 3.4 in \cite{JLR} and Remark 3.7 therein).

Part (ii) is also a consequence of Chebyshev's inequality, but more technical as it applies to the numbers of copies of several non-isomorphic graphs (all possible unions $F$ of pairs of intersecting copies of $B^-$.) However, we can just quote inequality (3.22) from~\cite{JLR}, page~76, which states that for every such $F$ the number $X_F$ of copies of $F$ in $G(n,p)$ a.a.s. satisfies the inequality $X_F\le D_F\Psi^2/\Phi^v_{B^-}$ for some constant $D_F$, where $\Phi_{B^-}^v = \min\{\Phi_{B^-},n\}$ (see also: Notes on Notation in \cite[page 10]{JLR}). Since $\Phi^v_{B^-}=n$,  (ii) follows with $D=\sum_FD_F$.

Part (iii) follows by Proposition \ref{Janson} with $\tau=\beta'$, $t=2$, $F:=B^-$, and $\cF$ -- the family of all copies of $B^-$ which turn a half-$(\xi,v)$-absorber into a full $(\xi,v)$-absorber.
Then, there exists a constant $c=c_{B^-}>0$ such that
\[
\PP\left(X_v \le \frac12 \beta' \Psi\right)\le \exp\{-(\beta')^2cCn\}
\]
and, taking $C = C(\beta',c)$ large enough, (iii) follows by the union bound over all $v$.
\end{proof}

\medskip


Using the assumptions on $p$ and $\l$, it can be easily checked that $\Psi=\Omega(n^{k+1})$. Thus, Proposition~\ref{prop:absorbers} (iii) guarantees a.a.s. $\Omega(\Psi)=\Omega(n^{k+1})$  $\xi$-absorbers in $H$. We now thin down this family to a linear size in $n$ in a random fashion.

\begin{prop}\label{A'}
Let $\gamma\le(\beta'/24D)^{2/3}$. Then there exists a family $\cA'$ of $\xi$-absorbers with the following properties:
\begin{enumerate}[label=\rmlabel]
\item $|\cA'| \leq 6\gamma^{3/2}n$;
\item the number of pairs of intersecting elements in $\cA'$ is at most $ \frac18\beta' \gamma^{3/2} n$;
\item for every $v\in V(G)$, there are at least $\frac14 \beta'\gamma^{3/2}  n$ $(\xi,v)$-absorbers in $\cA'$.
\end{enumerate}
\end{prop}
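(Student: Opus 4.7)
The plan is to extract $\cA'$ from the full random family of $\xi$-absorbers in $H$ by a standard random sparsification. Let $\cA_0$ denote the (random) family of all $\xi$-absorbers in $H$. By Proposition~\ref{prop:absorbers}, we may assume that $|\cA_0|\le 2\Psi$, that at most $D\Psi^2/n$ pairs of elements of $\cA_0$ share a vertex, and that, for every vertex $v\in V$, the family $\cA_0$ contains at least $\tfrac12\beta'\Psi$ $(\xi,v)$-absorbers. I would then form $\cA'$ by including each element of $\cA_0$ independently with probability $q:=\gamma^{3/2}n/\Psi$.

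For properties (i) and (ii) I would apply Markov's inequality. Using Proposition~\ref{prop:absorbers}(i), $\E|\cA'|\le 2\Psi q = 2\gamma^{3/2}n$, so $\PP(|\cA'|>6\gamma^{3/2}n)\le 1/3$. Similarly, the number of intersecting pairs in $\cA'$ has expectation at most $q^2\cdot D\Psi^2/n = D\gamma^3 n$. The hypothesis $\gamma\le(\beta'/(24D))^{2/3}$ rearranges to $D\gamma^{3/2}\le\beta'/24$, whence $D\gamma^3 n\le\tfrac{1}{24}\beta'\gamma^{3/2}n$; Markov then forces the intersecting-pair count below $\tfrac18\beta'\gamma^{3/2}n$ with probability at least $2/3$.

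For property (iii) I would use Chernoff's inequality. Conditional on $H$ (and hence on $\cA_0$), the number $Z_v$ of $(\xi,v)$-absorbers in $\cA'$ is a sum of independent Bernoulli$(q)$ variables, one per $(\xi,v)$-absorber in $\cA_0$. Therefore $\E Z_v \ge q\cdot\tfrac12\beta'\Psi = \tfrac12\beta'\gamma^{3/2}n$, and a standard lower-tail Chernoff bound yields $\PP\bigl(Z_v<\tfrac14\beta'\gamma^{3/2}n\bigr)\le\exp(-c\beta'\gamma^{3/2}n)$ for some absolute constant $c>0$. Since $\beta'\gamma^{3/2}n$ is linear in $n$, a union bound over the $n$ choices of $v$ still gives $o(1)$.

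Combining the three estimates by a final union bound, with probability strictly greater than $0$ all of (i)--(iii) hold simultaneously, and a deterministic family $\cA'$ as required exists. The only delicate point is matching constants: one must check that the factors $2$, $3$, $D$ arising above line up with the target constants $6$, $\tfrac18$, $\tfrac14$ under the sole hypothesis $\gamma\le(\beta'/(24D))^{2/3}$. This is a routine book-keeping exercise, but it dictates the precise value $q=\gamma^{3/2}n/\Psi$ chosen at the outset.
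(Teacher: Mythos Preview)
Your proof is correct and follows essentially the same approach as the paper: random sparsification with $q=\gamma^{3/2}n/\Psi$, Markov's inequality for (i) and (ii), Chernoff plus a union bound for (iii), and a final union bound to extract a deterministic $\cA'$. The constants and computations match the paper's almost verbatim.
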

\proof
Put
\[
q :=\gamma^{3/2}n/\Psi\]
and denote by $\cA_q$ a random subfamily of $\xi$-absorbers which is obtained by selecting each one independently with probability $q$.
By Proposition~\ref{prop:absorbers}(i),
\[
\E|\cA_q|\le 2\Psi q=2\gamma^{3/2}n.
\]
Hence, by Markov's inequality
\[
\PP(|\cA_q| > 6\gamma^{3/2}n) \leq \frac1{3}.
\]
Similarly, by Proposition~\ref{prop:absorbers}(ii), the expected number  of pairs of intersecting elements in $\cA_q$ is at most $D\Psi^2/n$ and  thus the probability that their number is greater than $\tfrac18 \beta' \gamma^{3/2} n$ can be bounded from above by
\[
\frac{(D\Psi^2/n) q^2}{\beta' \gamma^{3/2} n/8} = \frac{8D\gamma^{3/2}}{\beta'} \leq \frac13.
\]

Finally, for a fixed $v\in V$, note that the number of $(\xi,v)$-absorbers in $\cA_q$ is binomially distributed. Hence, by Proposition \ref{prop:absorbers}(iii), its expectation is $X_vq\ge \frac12 \beta' \Psi q = \frac12 \beta' \gamma^{3/2}n$.
Thus, by Chernoff's bound (see, e.g., \cite[Theorem 2.1]{JLR}) and the union bound over all $v$, the probability of the opposite event to the one stated in (iii) is at most
\[
n\exp\{-\beta'\gamma^{3/2}n/8\}=n\exp\{-\Omega(n)\}<1/3,
\]
for sufficiently large $n$.
In conclusion, the probability that  properties (i)-(iii) hold for $\cA_q$ is positive, and thus, there exists a family $\cA'$ of $\xi$-absorbers which satisfies all three of them.
\qed

\medskip

\begin{proof}[Proof of Lemma~\ref{prop:absorbing}]
Given $\eps>0$, let $\beta'=\beta'(\eps)$ be as in Proposition~\ref{prop:half-v-absorbers}, and let $\xi=\min\{\xi_1,\xi_2\},$ where $\xi_1$ is as in Lemma~\ref{CL}, while $\xi_2$ is as in Proposition~\ref{prop:half-v-absorbers}. Further, let $C$ be as in Lemma~\ref{CL}, $D > 0$ -- as in Proposition~\ref{prop:absorbers}, and set
\begin{equation}\label{4in}
\gamma=\min\left\{ \left(\frac{\beta'}{24D}\right)^{2/3},\;\frac{\xi}{2(k+1)},\; \left(\frac{\beta'}{40}\right)^2,\; \frac1{(6\l(k+1)2^{k+3})^{2}}\right\}.
\end{equation}
Finally, fix any subset $R\subset V(G)$ of size $|R|\leq 2\gamma^2 n$.

In view of the discussion at the beginning of the section, it suffices to build an $m$-path containing at least $3\gamma^2n$ $(\xi,v)$-absorbers for every $v\in V$.

Let $\cA'$ be as in Proposition~\ref{A'}. Upon removing from $\cA'$ one $\xi$-absorber from each  intersecting pair, as well as all $\xi$-absorbers containing vertices from $R$, we obtain  a family $\cA$  which satisfies the following three conditions:
\begin{enumerate}[label=\rmlabel]
\item $|\cA| \leq 6\gamma^{3/2}n$;
\item all $\xi$-absorbers in $\cA$ are pairwise vertex disjoint;
\item for every $v\in V(G)$, there are at least $\frac18 \beta'\gamma^{3/2}  n-2\gamma^2n\ge 3\gamma^2 n$ $(\xi,v)$-absorbers in $\cA'$, where the last estimate follows from (\ref{4in}) and the fact that the absorbers in $\cA'$ are disjoint.
\end{enumerate}
There is a routine way to create the desired absorbing $m$-path from $\cA$ via repeated applications of Lemma~\ref{CL}.
Using Lemma~\ref{CL}, we  connect the  $\xi$-absorbers in $\cA$, one by one, into one  $m$-path $A$. Since each two consecutive $\xi$-absorbers on the $m$-path $A$ are connected by a sub-$m$-path with $\l(k+1)2^{k+1}$ internal vertices, by (\ref{4in}) and the inequality $m\le \l(k+1)$,
\[
|V(A)| \leq |\cA| \cdot (2m+ \l(k+1)2^{k+1}) \leq 6\gamma^{3/2}n \l(k+1)2^{k+2}\leq \frac{\gamma n}{2},
\]
as required.
In each step of the application of Lemma~\ref{CL}, the set $Z$ of forbidden vertices consists of the initial set $R$, the vertices in the $\xi$-absorbers in $\cA$, and the vertices used for the connections so far. Hence, by (\ref{4in}), even in the last step
\[
|Z| \leq |R|  + |V(A)| \leq 2\gamma^2 n + \frac{\gamma n}{2} \leq \gamma n \leq \frac{\xi n}{2(k+1)},
\]
which legitimates repeated applications of Lemma~\ref{CL}. Note that the $m$-ends of the obtained $m$-path $A$ are $\xi$-connectable, that is, $A$ is $\xi$-connectable. Moreover,  as stated in (iii), for every vertex $v\in V(G)$ the $m$-path $A$ contains at least $ 3\gamma^2 n$ (disjoint) $(\xi,v)$-absorbers. Consequently, for any set of vertices $U$ of size $|U| \leq 3\gamma^2 n$, one can absorb all the vertices from $U$ into $A$ obtaining an $m$-path $A_U$ with the same ends as $A$.
\end{proof}

\section{Covering Lemma}\label{sec:cover}

Our approach is similar to the one in the proof of Proposition 2.4 in \cite{DRRS}. The main new difficulty is to secure  $\xi$-connectable ends of the constructed $m$-paths. Here is an  outline of the proof.

We work under the hierarchy of constants
\begin{equation}\label{hiera}
 \eps \gg \gamma,\xi \gg T_0^{-1}, M^{-1}, \delta \gg T_1^{-1}, \gg  \tau \gg C^{-1}.
\end{equation}

In the first step we will take a $\delta$-quasirandom partition of the graph $G-Q$ and show that the associated reduced graph $\Gamma$ has a $C^k_{2k+2}$-factor (Claim \ref{8.2} below).
Then we will show that a.a.s.\ the subgraph of $H=G\cup G(n,p)$ corresponding to any copy of $C^k_{2k+2}$ in $\Gamma$ can be almost covered by not too many  vertex-disjoint $\xi$-connectable  $m$-paths (Claim \ref{8.3} below). The union of all these $m$-paths taken over all copies of $C^k_{2k+2}$ in a $C^k_{2k+2}$-factor of $\Gamma$ will constitute the desired family of $m$-paths.

We begin with the deterministic part. Consider a $\delta$-quasirandom partition
\[
V \setminus Q = V_0 \cup V_1 \cup \ldots \cup V_{T}
\]
of $G - Q$. Let $\Gamma$ be the reduced graph with respect to the above partition, namely, the vertex set of $\Gamma$ is $[T]$ and, for $1\le i< j\le T$, we include $\{i,j\}$ into $E(\Gamma)$ whenever $(V_i, V_j)$ is a $\delta$-quasirandom pair with density $d_{ij} = e(V_i,V_j)/|V_i||V_j| \geq \eps/3$.

\begin{claim}\label{8.2}
For all $\eps,\gamma,\delta>0$ with $\gamma+\delta\le \eps/6$, there is $T_0$ such that for all $T\ge T_0$, there exists a $C^k_{2k+2}$-factor $\cK$ in $\Gamma$.
\end{claim}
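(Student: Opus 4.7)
The plan is to reduce the claim to Theorem~\ref{AY} (Alon--Yuster) applied to the reduced graph $\Gamma$ with target graph $F=C^k_{2k+2}$. For this we must (i) compute $\chi(C^k_{2k+2})$ and (ii) show that $\delta(\Gamma)\ge(1-\tfrac{1}{\chi(F)}+\eps')T$ for some $\eps'>0$ once $T$ is large enough.

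For (i), observe that in $C_{2k+2}$ the only pairs of vertices at distance exceeding~$k$ are the $k+1$ antipodal pairs. Hence $C^k_{2k+2}$ is $K_{2k+2}$ minus a perfect matching, which is the complete $(k+1)$-partite graph with all parts of size~$2$. In particular $\chi(C^k_{2k+2})=k+1$, so Theorem~\ref{AY} will supply a $C^k_{2k+2}$-factor as soon as $\delta(\Gamma)\ge(\tfrac{k}{k+1}+\eps')T$.

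For (ii), set $s=|V_i|$ for $i\in[T]$; since $|Q|\le\gamma n$ and $|V_0|\le\delta n$, the common class size satisfies $Ts\ge(1-\gamma-\delta)n$, so $n/s\ge T$. Each $v\in V_i$ has at least $(\tfrac{k}{k+1}+\eps)n-|Q|-|V_0|-s$ neighbours in $V_1\cup\cdots\cup V_T$ outside $V_i$, whence
\[
\sum_{j\ne i} e_G(V_i,V_j)\ge s\bigl[(\tfrac{k}{k+1}+\eps-\gamma-\delta)n-s\bigr].
\]
Using $\gamma+\delta\le\eps/6$ and $s\le n/T\le\eps n/6$ (valid for $T\ge T_0$ large), the average density $\bar d_i=\sum_{j\ne i}e_G(V_i,V_j)/(s^2(T-1))$ is at least $\tfrac{k}{k+1}+2\eps/3$. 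A standard averaging step now bounds the number $c_1$ of indices $j\ne i$ with $d_{ij}\ge\eps/3$: since the remaining pairs contribute at most $\eps/3$ each to $\sum d_{ij}$, we get $c_1\ge(T-1)(\tfrac{k}{k+1}+\eps/3)$. Subtracting the at most $\delta T$ non-$\delta$-quasirandom pairs yields
\[
\delta(\Gamma)\ge(T-1)(\tfrac{k}{k+1}+\eps/3)-\delta T\ge(\tfrac{k}{k+1}+\eps/12)T
\]
for $\delta\le\eps/6$ and $T\ge T_0$ large enough. Invoking Theorem~\ref{AY} with $\eps'=\eps/12$ and $F=C^k_{2k+2}$ gives the required $C^k_{2k+2}$-factor $\cK$ in $\Gamma$. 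The only ``obstacle'' is bookkeeping with the constants $\gamma,\delta,\eps,T$ so that the error terms $s/n$ and the lost $\delta T$ non-quasirandom pairs are swallowed by the slack $\eps/6$; the hypothesis $\gamma+\delta\le\eps/6$ is exactly what makes this work.
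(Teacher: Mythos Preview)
Your proposal is correct and follows essentially the same route as the paper: verify $\chi(C^k_{2k+2})=k+1$, prove $\delta(\Gamma)\ge(\tfrac{k}{k+1}+c\eps)T$ by an edge-counting argument exploiting $\delta(G)\ge(\tfrac{k}{k+1}+\eps)n$ and the hypothesis $\gamma+\delta\le\eps/6$, and then invoke Theorem~\ref{AY}. The paper organises the count as a two-sided bound on $e(V_i,V)$ rather than your average-density-plus-averaging step, and obtains the slightly sharper constant $\eps/3$ in place of your $\eps/12$, but the substance is the same.
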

\proof
Take any $\eps,\gamma,\delta>0$ with $\gamma+\delta\le \eps/6$. Via Theorem \ref{AY} with $\eps:=\eps/3$ and $F:=C^k_{2k+2}$, choose $T_0$ and let $T\ge T_0$.
We first show that
\[
\delta(\Gamma)\ge\left(\frac{k}{k+1} + \frac{\eps}3 \right) T.
\]
Let us extend notation $e(U,W)$ to  intersecting sets $U$ and $W$ by counting twice the edges contained in $U \cap W$. In particular, for any $i=1,\dots,T$, $ e(V_i,V)=\sum_{v\in V_i}deg_G(v)$.
Thus, using the minimum degree condition imposed on $G$,
\[
e(V_i,V) \geq \left(\frac{k}{k+1} + \eps\right) |V_i| n.
\]
On the other hand, using the bound $|V_i|\le n/T$, the $\delta$-quasirandomness of the partition gives that
\begin{align*}
e(V_i,V) & \leq e(V_i,Q\cup V_0) + deg_{\Gamma}(i)|V_i|^2 + (T-deg_{\Gamma}(i))\left(\frac{\eps}3 + \delta\right)|V_i|^2 \\
& \leq (\gamma + \delta)|V_i|n + \frac{deg_{\Gamma}(i)}{T}|V_i|n + \left(\frac{\eps}3 + \delta\right) |V_i|n.
\end{align*}
Combining these two estimates and assuming that $\gamma + \delta \leq \eps/6$, we obtain, for all $i=1,\dots,T$, the lower bound
\[
deg_{\Gamma}(i) \geq \left(\frac{k}{k+1} + \frac{\eps}{3}\right) T.
\]
It is easy to check that $\chi(C^k_{2k+2})=k+1$. Hence, the existence of a $C^k_{2k+2}$-factor $\cK$ in $\Gamma$  follows by Theorem \ref{AY} applied with $\eps:=\eps/3$, $F:=C^k_{2k+2}$ and $G:=\Gamma$, for sufficiently large~$T$. \qed

\medskip

Turning to the union $H=G\cup G(n,p)$, we  now describe an event $\mathcal E=\cE(\xi, M,\tau)$ and show that it holds for  the random graph $G(n,p)$ a.a.s.
Fix a sequence $\seq{X} = (X_1,\ldots,X_{2k+2})$ of disjoint subsets of $V(G)$ and define a family $\cF (\seq{X})$  of copies of the graph
\[
B:=(k+1) B(\l,r,2M)
\]
as follows. Suppose that there is  a copy of the $\l M$-blow-up  $C^k_{2k+2}(\l M)$ in $G$ with each vertex class  $U_i \subseteq X_i$, $i=1,\ldots,2k+2$.
Then, we include in  $\cF (\seq{X})$ a copy $\tB$ of $B$ which is given by decomposition (\ref{C}) of Proposition~\ref{m-path decomposition} with $t:=2M$, \emph{provided} that the ends of the resulting $m$-path $P^m_{(2k+2)\l M}$ are $\xi$-connectable.

For any $\tau>0$, let
\[
\ccJ = \{ \seq{X} = (X_1,\ldots,X_{2k+2}) : |\cF (\seq{X})| \geq \tau n^{(2k+2)\l M}\}.
\]
The event $\cE$ holds if for every $\seq{X}\in\ccJ$ there is a subgraph $\tB \in \cF(\seq{X})$ with $\tB\subseteq G(n,p)$.

\begin{claim}\label{E} For every $\xi,M,\tau>0$,  there is $C\ge1$ such that for $p\ge Cn^{-2/\l}$ the event $\cE$ holds a.a.s.
\end{claim}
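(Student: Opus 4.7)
The plan is to invoke Proposition~\ref{Janson} for each fixed $\seq{X}\in\ccJ$ and then take a union bound over all admissible sequences $\seq{X}$. The graph in question is $B=(k+1)B(\l,r,2M)$, which contains a copy of $K_\l$ (since $\l\ge 2$), so $B$ itself plays the role of the graph $F$ in Proposition~\ref{Janson}.

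For a fixed $\seq{X}=(X_1,\dots,X_{2k+2})\in\ccJ$, the family $\cF(\seq{X})$ consists of at least $\tau n^{v_B}$ copies of $B$ in $K_n$, where $v_B=(2k+2)\l M$. Let $X_{\seq{X}}$ count those copies which appear in $G(n,p)$. By Proposition~\ref{Janson}, applied with the family $\cF:=\cF(\seq{X})$ and the same parameter $\tau$ as in the definition of $\ccJ$, there is a constant $c=c_B>0$ (depending only on $B$, hence only on $k,\l,r,M$) such that
\[
\PP\bigl(X_{\seq{X}}\le \tau\Psi_B/2\bigr)\le \exp\{-\tau^2 c C n\}.
\]
Since $X_{\seq{X}}$ is a nonnegative integer, the event ``no copy of $B$ from $\cF(\seq{X})$ lies in $G(n,p)$'' is contained in $\{X_{\seq{X}}\le \tau\Psi_B/2\}$, so it occurs with probability at most $\exp\{-\tau^2 c Cn\}$.

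Now I would union-bound over $\seq{X}$. Every $\seq{X}$ is a tuple of $2k+2$ (disjoint) subsets of $V$, and there are at most $(2^n)^{2k+2}=2^{(2k+2)n}$ such tuples in total (so in particular at most this many belong to $\ccJ$). Hence
\[
\PP(\overline{\cE})\le 2^{(2k+2)n}\exp\{-\tau^2 c Cn\}=\exp\bigl\{-\bigl(\tau^2 cC-(2k+2)\ln 2\bigr)n\bigr\},
\]
which tends to $0$ as $n\to\infty$ provided that $C=C(\xi,M,\tau)$ is chosen large enough so that $\tau^2 cC>(2k+2)\ln 2$. (Note that $c$ depends only on $k,\l,r,M$, so this is a legitimate choice.) This is precisely the conclusion of Claim~\ref{E}.

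There is essentially no obstacle here: the work was already done in setting up the machinery. The only point requiring care is that Proposition~\ref{Janson} requires the ``hosting'' graph $B$ to contain $K_\l$, which is immediate from the structure of $B=(k+1)B(\l,r,2M)$ when $\l\ge 2$, and that the constant $c=c_B$ in Proposition~\ref{Janson} does not depend on $\seq{X}$, so the same exponential bound applies uniformly over the union bound.
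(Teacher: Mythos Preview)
Your proof is correct and follows essentially the same route as the paper's: apply Proposition~\ref{Janson} with $F=B$ to obtain the bound $\exp\{-\tau^2 cCn\}$ for each fixed $\seq{X}\in\ccJ$, then take a union bound over the at most $2^{(2k+2)n}$ admissible sequences and choose $C$ large enough. The paper makes the explicit choice $C\ge (2k+3)/(c\tau^2)$, but otherwise the arguments are identical.
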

\proof
Let $c=c_{\tB}>0$ be a constant resulting from Proposition~\ref{Janson} with $t:=2M$, $F := \tB$, $\cF:=\cF(\seq{X})$. Further, let $C\ge (2k+3)/(c\tau^2)$. Suppose that $\ccJ \neq\emptyset$, since otherwise $\cE$ holds vacuously. For a given $\seq{X}\in\ccJ$, let $Y$ be the number of $\tB \in \cF(\seq{X})$ with $\tB\subseteq G(n,p)$. Then, by Proposition~\ref{Janson}
\[
\PP(Y=0)\le\PP(Y\le\tau\Psi_{\tB}/2)\le \exp\{-\tau^2cCn\}=o(2^{-(2k+2)n}).
\]
Since $|\ccJ| \leq 2^{(2k+2)n}$, by the union bound,
\[
\PP(\neg \cE)\le2^{(2k+2)n} \times o(2^{-(2k+2)n})=o(1).
\]
\qed

At the heart of the proof of Lemma~\ref{prop:covering} lies the following claim.

\begin{claim}\label{8.3}
For all $\eps>0$, $T$, and $M$, there exists $\xi>0$, $\gamma>0$, and $\delta>0$ such that for $C=C(\eps,M,\gamma)$ the following holds. If $\Gamma$ is the reduced graph of a $\delta$-quasirandom partition of $G-Q$ defined above and $K\subseteq V(\Gamma)$, $|K|=2k+2$, induces a copy of $C^k_{2k+2}$ in $\Gamma$,  then, with $V_K = \bigcup_{i\in K} V_i$, a.a.s.\ all but at most $\frac12\gamma^2|V_K|$ vertices of $H[V_K]$ can be covered by vertex disjoint $\xi$-connectable $m$-paths on $(2k+2)\l M$ vertices.
\end{claim}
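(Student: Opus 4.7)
The plan is to construct the covering family greedily. Set $V'_{i_j} := V_{i_j}$ for $j = 1, \ldots, 2k+2$, and at each step we extract an $m$-path on $(2k+2)\ell M$ vertices that uses exactly $\ell M$ vertices from each $V'_{i_j}$ and whose two ends are $\xi$-connectable in~$G$, and then delete its vertices from the sets $V'_{i_j}$. By Fact~\ref{heredit}, as long as every $|V'_{i_j}|$ retains size at least $\tfrac{\gamma^2}{4(2k+2)}|V_K|$, each pair $(V'_{i_a}, V'_{i_b})$ with $\{a,b\} \in E(C^k_{2k+2})$ remains $\delta^{*}$-quasirandom in $G$ with density at least $\eps/3 - o(1)$, provided $\delta$ was chosen small enough relative to $\gamma$. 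This invariant is what allows us to treat every step identically.

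Under this invariant, a single step is implemented as follows. The Counting Lemma~\ref{counting_lemma} applied with $F = C^k_{2k+2}$ yields at least $c_0 (n')^{2k+2}$ copies of $C^k_{2k+2}$ in $G$ with the $j$-th vertex in $V'_{i_j}$, where $n' = \min_j |V'_{i_j}|$ and $c_0 = c_0(\eps) > 0$. Corollary~\ref{blow} with $q = \ell M$ then promotes this collection to at least $c_1 n^{(2k+2)\ell M}$ copies of the blow-up $C^k_{2k+2}(\ell M)$ with the $j$-th class inside $V'_{i_j}$, where $c_1 > 0$ depends on $\eps$, $M$, and $T_1$. From each such blow-up, decomposition~(\ref{C}) of Proposition~\ref{m-path decomposition} produces an $m$-path on $(2k+2)\ell M$ vertices; we retain only those blow-ups whose associated $m$-path has both ends $\xi$-connectable in~$G$. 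Each end is an $m$-tuple consisting of all $\ell$ vertices of one $\ell$-block from each of $k$ consecutive cycle classes, followed by $r$ vertices of the next $\ell$-block, so, for $\ell M$ large enough, Proposition~\ref{st} applied within each of the two $(k+1)$-tuples of cycle classes responsible for an end secures a positive fraction of admissible orderings of each blow-up. This leaves at least $\tau n^{(2k+2)\ell M}$ valid blow-ups, hence $\seq{X} = (V'_{i_1}, \ldots, V'_{i_{2k+2}}) \in \ccJ$. The event $\cE$ from Claim~\ref{E} (holding a.a.s.) then supplies a braid $\tB \in \cF(\seq{X})$ with $\tB \subseteq G(n,p)$, and combining $\tB$ with the corresponding $C^k_{2k+2}(\ell M)$ in $G$ produces the desired $\xi$-connectable $m$-path in~$H$.

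The iteration terminates as soon as some $|V'_{i_j}|$ falls below $\tfrac{\gamma^2}{4(2k+2)}|V_K|$; since each step removes exactly $\ell M$ vertices from every class, the total number of uncovered vertices is at most $(2k+2) \cdot \tfrac{\gamma^2}{4(2k+2)}|V_K| + (2k+2)\ell M \le \tfrac12 \gamma^2 |V_K|$, as required. The main obstacle is the middle step above: producing a \emph{positive-density} subfamily of blow-ups whose associated $m$-paths, as dictated by decomposition~(\ref{C}), have both ends $\xi$-connectable. This forces a careful coupling of the interlacing tool of Proposition~\ref{st} with the precise end-structure imposed by~(\ref{C}), and it is exactly here that the hierarchy $\eps \gg \gamma, \xi \gg M^{-1} \gg \delta \gg T_1^{-1} \gg \tau$ must be used in its full strength.
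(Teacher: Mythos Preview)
Your approach is essentially the paper's: argue by maximality (you phrase it as a greedy iteration, which is equivalent), apply the Counting Lemma and Corollary~\ref{blow} to the leftover sets, use Proposition~\ref{st} to secure $\xi$-connectable ends, and conclude via membership in $\ccJ$ together with the event~$\cE$.

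There is one point where the paper is more careful and where your sketch has a small gap. The claim is stated for \emph{all} $M$, but Proposition~\ref{st} requires input sets of size at least some $t=t(\eps,\ell)$ that you do not control; your direct application of Proposition~\ref{st} to the $\ell M$-sized blow-up classes silently assumes $\ell M\ge t(\eps,\ell)$. The paper avoids this by first blowing up to size $t=\max\{t(\eps,\ell),\ell M\}$, applying Proposition~\ref{st} there to extract $\ell$-sized subsets $W_i$, and only then passing to an $\ell M$-blow-up containing the $W_i$, with the ordering chosen so that the $W_i$ sit at the two ends of the $m$-path coming from decomposition~(\ref{C}). Relatedly, Proposition~\ref{st} does not literally deliver a ``positive fraction of admissible orderings'': it outputs \emph{specific} subsets $W_i$, and one simply picks the single ordering that places them at the ends. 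That one ordering per blow-up is already enough for the count $|\cF(\seq{X})|\ge \tau n^{(2k+2)\ell M}$, which is exactly how the paper phrases it.
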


\proof
Given $\eps>0$, let $\gamma,\delta$ be as in Claim \ref{8.2}, i.e. $\gamma + \delta \le \eps/6$. In addition, let
\begin{equation}\label{del}
\delta\le \frac{\gamma^4\left(\frac{\eps}{3}\right)^{e_F}}{16e_F},
\end{equation}
and let $M$ and $T$ be arbitrary.
Without loss of generality assume $K = [2k+2]$. Let $\cP$ be a largest collection of vertex-disjoint $\xi$-connectable $m$-paths in $H[V_K]$, each on $(2k+2)\l M$ vertices, with $\l M$ vertices in every $V_i$, $i=1,\dots,2k+2$. Let $X_i\subseteq V_i$, $i\in[2k+2]$, be the subset of $V_i$ consisting of all vertices not appearing on the $m$-paths in $\cP$. We have
\[
|X_1| = \ldots = |X_{2k+2}| = x
\]
for some integer $x$. It suffices to prove that $x \leq \frac12\gamma^2|V_i|$.

Assume that this is not the case. We will show that $\seq{X}=(X_1,\dots, X_{2k+2}) \in \ccJ$, which will further imply, using property $\cE$, the existence of a~$\xi$-connectable $(2k+2)\l M$-vertex $m$-path with vertex set contained in $X_1 \cup \ldots \cup X_{2k+2}$, thus contradicting the maximality of $\cP$.

Since $K=[2k+2]$ induces a copy of $C^k_{2k+2}$ in $\Gamma$, each pair $(V_i,V_{j})$, with $i,j$ lying at distance at most $k$ on $C^k_{2k+2}$, is  $\delta$-quasirandom in $G$ with density $d_{ij} \geq \eps/3$.

Let $\Omega'$ be the family of copies of $C^k_{2k+2}$ in $G[\seq{X}]$ with the property that each vertex  is contained in distinct $X_i$, $i\in [2k+2]$. By Fact \ref{heredit}, with $\alpha=\beta=\gamma^2/2$, the induced subgraph $G[\seq{X}]$  is $\delta'$-quasirandom with $\delta'=\tfrac{8\delta}{\gamma^4}$.
By Lemma~\ref{counting_lemma} applied to $F:=C^k_{2k+2}$ and $G:=G[\seq{X}]$, it follows, using also \eqref{del}, that
\[
|\Omega'| \geq \left( \left(\frac{\eps}{3}\right)^{e_F} - e_F\delta'\right) x^{v_F}\ge\frac{\left(\frac{\eps}{3}\right)^{e_F}}{2v_F^{v_F}}(v_Fx)^{v_F},
\]
where $e_F = (2k+2)k$ and $v_F = 2k+2$.
We are about to apply Proposition \ref{st} with $s := \l$. Let $t(\eps,\l)$ and $\xi$ be the resulting constants. Set $t=\max\{t(\eps,\l),\l M\}$. First we need to generate many copies of the $t$-blow-up of $C^k_{2k+2}$. By Corollary \ref{blow} with $q:=t$,
\[
d:=\frac{\left(\frac{\eps}{3}\right)^{e_F}}{2v_F^{v_F}},
\]
$F:=C^k_{2k+2}$, $G:=G[\seq{X}]$, and $\cF:=\Omega'$,
there are, for some $\tau'>0$, at least $\tau' (v_Fx)^{tv_F}$ copies of $C^k_{2k+2}(t)$ with each vertex class contained in distinct $X_i$, $i\in [2k+2]$. Let $\Omega''$ be the family of all these copies. In particular, $|\Omega''|\ge \tau'(v_Fx)^{tv_F}$.

Fix one member of $\Omega''$ with vertex classes $Y_1,\dots, Y_{2k+2}$ and apply Proposition~\ref{st} with $s:=\l $ twice,  to $Y_1,\dots,Y_{k+1}$ and to $Y_{k+2},\dots, Y_{2k+2}$. This way we find in $C^k_{2k+2}(t)$ a copy of $C^k_{2k+2}(\l )$ with vertex classes $W_1,\ldots,W_{2k+2}$, $W_i\subset Y_i\subset X_i$, $i=1,\ldots,2k+2$,
and such that the following property holds. For any $m$-tuple $\seq{x}=(x_1,\ldots,x_m)$ with $\{x_1,\ldots,x_r\} \subseteq W_{k+1}$, $\{x_{r+1},\ldots,x_{r+\l}\}=W_k$, \ldots,$\{x_{m-\l+1},\ldots,x_m\}=W_1$ and for any $m$-tuple $\seq{x}'=(x_1',\ldots,x_m')$ with $\{x_1',\ldots,x_r'\}\subseteq W_{k+2}$, $\{x_{r+1}',\ldots,x_{r+\l}'\}=W_{k+3}$, $\ldots$, $\{x_{m-\l+1}',\ldots,x_m'\}=W_{2k+2}$, both $\seq{x}$ and $\seq{x}'$ are $\xi$-connectable.

Let us extend arbitrarily this copy of $C^k_{2k+2}(\l )$ to a copy of $C^k_{2k+2}(\l M)$ with vertex classes $U_i$ such that $W_i\subset U_i\subset Y_i$, $i=1,\dots,2k+2$ (this is possible as $t\ge \ell M$). We order its vertices so that the associated copy of $P^m_{(2k+2)\l M}$ (see decomposition (\ref{C})) begins with $W_{1},\dots,W_{k+1}$ and ends with $W_{k+2},\dots, W_{2k+2}$, so that its ends are $\xi$-connectable.

Let $\Omega'''$ denote the family of all copies of $C^k_{2k+2}(\l M)$ in $G[\seq{X}$] as above.
We just showed that every member  of $\Omega''$ gives rise to at least one member of $\Omega'''$. On the other hand, each member of $\Omega'''$ can be obtained from at most $x^{v_F(t-\l M)}$ members of $\Omega''$. Thus, using the bound $|V_i|\ge(1-\delta)n/T\ge n/(2T)$, the assumption $x\ge \frac12 \gamma^2|V_i|$, and setting
\begin{equation}\label{ta�}
\tau_T:=\tau' v_F^{tv_F}(\gamma^2/{4T})^{v_F\l M},
\end{equation}
we have
\[
|\Omega'''|\ge \frac{\tau' (v_Fx)^{tv_F} } {x^{v_F(t-\l M)}} = \tau' v_F^{tv_F} x^{v_F\l M} \ge\tau_T n^{v_F\l M},
\]
and so $\seq{X} \in \ccJ$. Let $C=C(\tau_T)$ be as in Claim~\ref{E}. Then, a.a.s.\ the property $\cE$ holds, meaning that there is at least one copy of $B$ in $G(n,p)$ which, together with a copy of $C^k_{2k+2}(\l M)$ from $\Omega'''$, induces a $\xi$-connectable $(2k+2)\l M$-vertex $m$-path in $X_1\cup\ldots\cup X_{2k+2}$. \qed

\medskip

\begin{proof}[Proof of Lemma~\ref{prop:covering}] We begin by choosing constants as required implicitly by the preceding claims.
Given $\eps$, let $\gamma\le\eps/12$ and $M$ be so large that
\begin{equation}\label{M}
 ((2k+2)\l M)^{-1}\le \gamma^3 .
\end{equation}
Further, let  $\xi=\xi(\eps,\l)$ be as in Proposition~\ref{st}.
Next, choose an integer
\[
T_0=\max\left\{T_0(\eps/3,C^k_{2k+2}),\frac{4(2k+1)}{\gamma^2}\right\},
\]
where $T_0(\eps/3,C^k_{2k+2})$ is as in Theorem~\ref{AY}, and a constant $\delta > 0$ with
\[
\delta\leq \gamma^2/4
\]
satisfying (\ref{del}). Let $T_1=T_1(\delta,T_0)$ be given by Lemma \ref{regularity_lemma}. Finally, take $\tau=\tau_{T_1}$ as in (\ref{ta�}) and  $C=C(\tau)$ as in Claim~\ref{E}.

Apply the Szemer\'edi Regularity Lemma (Lemma~\ref{regularity_lemma}) to $H\setminus Q$ with $\delta$ and $T_0$ to obtain a partition $V\setminus Q = V_0 \dcup V_1 \dcup \ldots \dcup V_{T}$, with $T_0\le T\le T_1$. Let $\Gamma$ be the reduced graph with respect to that partition. By Claim~\ref{8.2}  there exists a $C^k_{2k+2}$-factor $\cK$ covering all but at most $2k+1$ vertices of $\Gamma$.
Applying Claim~\ref{8.3} to each $C^k_{2k+2}$ in $\cK$, we obtain a global family $\cP$ of vertex disjoint $\xi$-connectable $m$-paths in $H\setminus Q$, each having exactly $(2k+2)\l M$ vertices, covering all but at most
\[
\left(\delta + \frac{2k+1}{T_0} + \frac12\gamma^2\right)n\le \gamma^2 n
\]
vertices of $V\setminus Q$ (Here we use our assumptions on $\delta$ and $T_0$).  Moreover, the number of the paths in $\cP$ can be bounded from above by $\frac{n}{(2k+2)\l M}$ which, by (\ref{M}),  is at most $\gamma^3 n$.
\end{proof}


\section{Concluding remarks}\label{sec:con_rem}

Recall that the first case not covered by Corollary~\ref{cor:main_bounds} is $k=1$ and $m=5$.
We will see below that in this case the threshold, as defined in Definition~\ref{dirac_thr}, does not exist. The reason is that the range of $p=p(n)$ depends on $\alpha$ not only through the constant $C$ but also through the exponent of $n$. We believe that in many other cases the same is true as well.
First, let us focus on the lower bound. For convenience, we switch from $\alpha$ to $\eps=\alpha-\tfrac12$.

\begin{claim}\label{claim:15}
For each $0<\eps<1/9$ there exists a constant $c'=c'(\eps)>0$ and a sequence of $n$-vertex graphs $G_\eps:=G_\eps(n)$ such that $\delta(G_\eps)\ge \left(\tfrac12+\eps\right)n$ and for all $p\le  n^{-1/2-c'}$
\[
	\lim_{n\to\infty}\PP\big(G_\eps\cup G(n,p(n))\in\cC_n^5\big)=0\,.
\]
\end{claim}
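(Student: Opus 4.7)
The plan is to reuse the extremal construction underlying Theorem~\ref{thm:main_bounds} in the case $k=1$. Partition $[n]=V_1\dcup V_2$ with $|V_i|=n/2$, fix $W_i\subseteq V_i$ with $|W_i|=\lceil\eps n\rceil$, and set $G_\eps := K_{V_1,V_2}\cup K_{W_1,V_1\setminus W_1}\cup K_{W_2,V_2\setminus W_2}$. Then $\delta(G_\eps)\ge(\tfrac12+\eps)n$, and the key structural feature is that $G_\eps$ contains no edge inside $V_i\setminus W_i$. Write $W=W_1\cup W_2$, so $|W|\le 2\lceil\eps n\rceil$, and assume for contradiction that $G_\eps\cup G(n,p)\supseteq C_n^5$.

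My first step would be to extract a bounded-length sub-$5$-path. Removing the vertices of $W$ from the Hamiltonian cycle underlying the assumed $C_n^5$ splits it into at most $|W|$ vertex-disjoint arcs, each of which still inherits from $C_n^5$ the structure of a $5$-power of a path. The longest such arc contains at least $\lceil(n-|W|)/|W|\rceil$ vertices, and the assumption $\eps<1/9$ forces this quantity to be at least $4$ for all sufficiently large $n$. Hence one may fix an integer $s_0=s_0(\eps)\ge 4$ and extract a subpath $P_0\cong P^5_{s_0}$ on $s_0$ consecutive arc vertices, all lying in $V\setminus W$.

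Coloring $V(P_0)$ by the sides $V_1$ and $V_2$, every monochromatic edge of $P_0$ must be provided by $G(n,p)$, since $G_\eps$ has no edges inside $V_i\setminus W_i$. The maximum cut of $P^5_{s_0}$ is $3s_0-9$ (attained by the alternating coloring, with the obvious adjustment $\lfloor s_0/2\rfloor\lceil s_0/2\rceil$ when $s_0\le 6$), so at least $2s_0-6$ edges of $P_0$ are monochromatic. The resulting monochromatic subgraph $R$ is determined by the side-coloring of $V(P_0)$, giving at most $2^{s_0}$ shapes on $s_0$ labeled vertices. A union bound over these shapes and over all embeddings of $V(P_0)$ into $[n]$ then yields
\[
\PP\big[\,\exists\,R\subseteq G(n,p)\,\big]\le 2^{s_0}\cdot n^{s_0}\cdot p^{2s_0-6}\le 2^{s_0}\cdot n^{3-(2s_0-6)c'}.
\]
Choosing $c'=c'(\eps)>3/(2s_0-6)>0$ (which is well-defined precisely because $s_0\ge 4$ and hence $2s_0-6>0$) makes the exponent negative and the bound tend to $0$, yielding the sought contradiction for any $p\le n^{-1/2-c'}$.

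The main technical obstacle I foresee is the calibration step, where the bound $\eps<1/9$ enters: one must guarantee that the longest arc left after removing $W$ contains at least four consecutive vertices, which is what forces $s_0\ge 4$ and leaves room for a positive $c'$. In contrast with the proof of Theorem~\ref{thm:6k2} for $k=1$, no $K_5$-freeness of $G(n,p)$ is invoked here; the bound $2s_0-6$ on monochromatic edges is a purely deterministic max-cut statement about $P^5_{s_0}$. As $\eps\nearrow 1/9$ the required $c'$ must grow (since $s_0$ is then forced down to $4$ and the exponent $2s_0-6=2$ leaves less slack), which illustrates the paragraph preceding the claim: the threshold genuinely depends on~$\alpha=\tfrac12+\eps$ through its exponent, not merely through a multiplicative constant, so no single $d_{1,5}(n)$ meeting Definition~\ref{dirac_thr} can exist.
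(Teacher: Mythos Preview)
Your construction $G_\eps$ and the extraction of a $5$-path $P_0$ on $s_0$ vertices in $V\setminus W$ are both correct, and the union-bound framework would prove the claim---but the key quantitative step is wrong. The assertion that the maximum cut of $P^5_{s_0}$ equals $3s_0-9$ fails already at $s_0=8$: the coloring $(0,0,1,1,1,1,0,0)$ has only $9$ monochromatic edges (five at distance~$1$, two at distance~$2$, one each at distances~$3$ and~$5$, none at distance~$4$), whereas $2s_0-6=10$. Extending this pattern $8$-periodically shows that the minimum number of monochromatic edges is at most about $\tfrac{7}{4}s_0$ for large $s_0$, so your bound $n^{s_0}p^{2s_0-6}$ is not justified. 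With the true minimum $f(s_0)$ in place one still gets $n^{s_0}p^{f(s_0)}\to 0$ for sufficiently large $c'$, so the claim as stated survives, but the required $c'$ then stays bounded away from $0$ (roughly $c'\gtrsim\tfrac{1}{14}$) no matter how small $\eps$ is---undercutting your final paragraph's suggestion that $c'$ shrinks with $\eps$.

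The paper avoids the max-cut issue entirely by inserting one extra step: before extracting the $5$-path it also deletes one vertex from each copy of $K_4$ in $G(n,p)$ (a.a.s.\ only $o(n)$ such copies exist, since $p=o(n^{-1/2})$). The resulting graph is $K_4$-free inside each $V_i\setminus W_i$, which forces every window of six consecutive vertices on the $5$-path to split $3$--$3$ between the two sides; in particular the trace of the $5$-path on one side is itself a $2$-path $Q$ on $q\ge\tfrac{1}{6\eps}$ vertices lying entirely in $G(n,p)$. A $2$-path on $q$ vertices has exactly $2q-3$ edges, and Markov's inequality gives $n^q p^{2q-3}=o(1)$ once $c'\ge\tfrac{3}{4q-6}$, yielding the explicit $c'=\tfrac{9\eps}{2-18\eps}\to 0$ as $\eps\to 0$. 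The $K_4$-removal is precisely what your argument lacks: it replaces a worst-case bound over arbitrary $2$-colorings by a rigid structure whose vertex-to-edge ratio can be computed exactly.
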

\begin{proof}
Fix $0<\eps<1/9$ and define $c'=\frac{9\eps}{2-18\eps}$. Let $p=o(n^{-1/2-c'})$. Since $p=o(n^{-1/2})$, by Markov's inequality the number of copies of $K_4$ in $G(n,p)$ is a.a.s.\ $o(n)$. Now consider the graph $G_\eps:=G_{1/2+\eps}$ as described in the proof of Theorem~\ref{thm:main_bounds}. Assume that $H=G_\eps \cup G(n,p)$ contains a copy $C$ of $C_n^{5}$. After removing from $H$ all vertices in $W_1\cup W_2$ as well as at least one vertex of each copy of $K_4$ from $G(n,p)$ we obtain a subgraph $H' \subset H$ on $n-2\eps n - o(n)$ vertices. Observe that $H' \cap C$ contains a 5-path~$P$ of length
$n /  (2\eps n + o(n)) \ge \frac{1}{3\eps}$.
As in the proof of Theorem~\ref{thm:main_bounds} one can show that $G(n,p) \cap P$ contains a 2-path $Q$ on $q \ge \frac{1}{6\eps}$ vertices.
Observe that $Q$ has exactly $2q-3$ edges. Since
\[
\frac{q}{2q-3} = \frac{1}{2} + \frac{3}{4q-6} \le \frac{1}{2} + c',
\]
we have $p=o(n^{-q/(2q-3)})$ and, hence, Markov's inequality yields that a.a.s. $G(n,p)$ contains no 2-path on $q$ vertices, a contradiction.
\end{proof}

For the upper bound  it only follows from Theorem~\ref{thm:main} applied with $k=1$, $\ell=4$ and $r=1$ that the threshold is $O(n^{-1/2})$. It turns out that representing $m=5$ differently ($k=1$, $\ell=3$ and $r=2$) and taking a similar approach as in the proof of Theorem~\ref{thm:main} one can show  a better bound.
\begin{claim}\label{claim:15u}
For each $\eps>0$ there exists a constant $c''=c''(\eps)>0$ such that   for all $p\ge  n^{-1/2-c''}$
 \[
	\lim_{n\to\infty}\min_G\PP\big(G\cup G(n,p(n))\in\cC_n^5\big)=1\,,
\]
where the minimum is taken over all $n$-vertex graphs~$G$ with $\delta(G)\geq\left(\tfrac12+\eps\right) n$.
\end{claim}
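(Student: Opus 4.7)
The plan is to rerun the proof of Theorem \ref{thm:main} using the alternative parameter choice $k=1$, $\ell=3$, $r=2$, which gives $m=k\ell+r=5$ but falls outside the regime $\ell\geq r(r+1)$ required by our main theorem. The structural decomposition $P^{5}_{6t}\subseteq P^{1}_{2t}(3)\dcup 2B(3,2,t)$ supplied by Proposition \ref{m-path decomposition} does not use any inequality between $\ell$ and $r$, and the combinatorial content of the Connecting, Reservoir, Absorbing, and Covering Lemmas goes through essentially verbatim. The only place where $\ell\geq r(r+1)$ enters is the Janson-type estimate Proposition \ref{Janson}, so the whole argument reduces to establishing a substitute for that estimate for the braid $B(3,2,t)$ with $p$ just above $n^{-1/2}$.

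First I would compute the maximum subgraph density of $B(3,2,t)$. As in the proof of Proposition \ref{Janson}, the densest subgraph is the braid itself, yielding
\[
m_{B(3,2,t)}=\frac{e_B}{v_B-1}=\frac{6t-3}{3t-1}<2,
\]
a value that approaches $2$ as $t\to\infty$ but remains strictly below it for every finite $t$. A direct inspection of Sections 6--8 shows that each of the four lemmas invokes Proposition \ref{Janson} only through a braid $B(3,2,t)$ of explicitly bounded size: $t=2^{k+1}=4$ in the Connecting and Reservoir Lemmas, $t=2$ in the Absorbing Lemma, and $t=2M$ in the Covering Lemma with $M=M(\eps)$ determined by the regularity parameters. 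Writing $t^{\star}=t^{\star}(\eps)$ for the largest such $t$, I would set $m^{\star}=(6t^{\star}-3)/(3t^{\star}-1)<2$ and
\[
c''=\tfrac{1}{4}\!\left(\tfrac{1}{m^{\star}}-\tfrac{1}{2}\right)>0.
\]

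Then for $p\geq n^{-1/2-c''}$ and any subgraph $H$ of the relevant braid we have $np^{m^{\star}}=n^{1-(1/2+c'')m^{\star}}\to\infty$, and consequently
\[
\Psi_H=n\bigl(np^{e_H/(v_H-1)}\bigr)^{v_H-1}\geq n\bigl(np^{m^{\star}}\bigr)^{v_H-1}\geq n^{1+\kappa}
\]
for some $\kappa=\kappa(\eps)>0$, so $\Phi_B$ and $\Phi_{B^-}$ both grow polynomially with $n$. This is stronger than the conclusion $\Phi\geq Cn$ that Proposition \ref{Janson} provides in the main theorem and therefore serves as a drop-in replacement: the proofs of Lemmas \ref{prop:reservoir}, \ref{prop:absorbing}, and \ref{prop:covering}, together with the Connecting Lemma \ref{CL}, transfer without modification to $k=1$, $\ell=3$, $r=2$, and the global assembly in Section 4 yields $C_n^5\subseteq G\cup G(n,p)$ a.a.s.

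The main obstacle is really only bookkeeping: one has to locate $t^{\star}(\eps)$ inside the regularity-based constant $M(\eps)$ of the Covering Lemma and verify that no other step of the proof introduces a braid of unbounded size. This is also where the dependence on $\eps$ enters the exponent, explaining why the threshold in the sense of Definition \ref{dirac_thr} does not exist for $k=1$, $m=5$, and why $c''$ necessarily degrades as $\eps\downarrow 0$ (since $t^{\star}$ blows up polynomially in $1/\eps$ through $M(\eps)$, and $1/m^{\star}-1/2$ is of order $1/t^{\star}$).
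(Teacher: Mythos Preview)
Your proposal is correct and follows essentially the same route as the paper's own (outlined) proof: both switch to the representation $k=1$, $\ell=3$, $r=2$, identify that the only obstruction is Proposition~\ref{Janson}, compute $m_{B(3,2,t)}=(6t-3)/(3t-1)<2$, observe that the largest braid occurring in the argument has $t=2M$ with $M=M(\eps)$ coming from the Covering Lemma, and then rerun the proof of Theorem~\ref{thm:main} with $p\geq n^{-1/m_B}$. The paper additionally records the general fact that for $r=\ell-1$ one has $B(\ell,r,t)=P^{r}_{t\ell}$ and $d_H\le d_{P^{\ell-1}_{v_H}}\le d_{P^{\ell-1}_{t\ell}}$ for every $H\subseteq B$, which is the justification behind your assertion that the densest subgraph is the full braid; you cite ``as in the proof of Proposition~\ref{Janson}'', but that proof actually reaches the opposite conclusion (densest subgraph $K_\ell$) under $\ell\ge r(r+1)$, so the short density computation here is genuinely new and worth stating.
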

\proof[Proof of Claim~\ref{claim:15u} (outline)] The key to the improvement of the bound on $p=p(n)$ is  a reformulation of Proposition \ref{Janson}
which yields the same bound $\PP(X \le\tau\Psi_F/2)\le \exp\{-\Omega(n)\}$ under milder assumptions on $p$. This is because now $m_B=d_{B(3,2,t)}=\tfrac{6t-3}{3t-1}<2$. In fact, it is true in more generality that for $\l\ge2$ and $r=\l-1$ (in which case $B(\l,r,t)=P^r_v$, $v=t\l$), for any $H\subseteq B(\l,\l-1,t)=P^r_v$, setting $v'=v_H$,
$$d_H\le d_{P^{\l-1}_{v'}}=\frac{v'(\l-1)-\binom\l2}{v'-1}\le \frac{v(\l-1)-\binom\l2}{v-1}=d_{P^{\l-1}_{v}}.$$
This means that taking $p=Cn^{-1/m_B}=Cn^{-1/2+1/(6(2t-1))}$ for sufficiently large $t$ (in fact, one should take $t=2M$, where $M$ is defined in Section \ref{sec:cover}) one can repeat every step of the proof of Theorem~\ref{thm:main}.
\qed

Determining the exact ``threshold'' for $\cC_n^5$ is left for the future work.

\medskip

Finally, let us emphasize that throughout this paper we have always assumed that $k\ge 1$.
There are some analogous results when $k=0$, i.e., assuming only that the minimum degree is a small fraction of $n$. It is known~\cite{BFM2003} that
$d_{0,1}(n) = n^{-1}$ and, in general, as it was shown in~\cite{BMPP2020} that $d_{0,m} = o(n^{-1/m})$ for $m\ge 2$. Determining the exact ``threshold'' for $m\ge 2$ is still open.


\subsection*{Acknowledgments} 
We are very grateful to both referees for their valuable remarks which have led to a better presentation of our results.

\begin{bibdiv}
\begin{biblist}

\bib{AY1996}{article}{
   author={Alon, Noga},
   author={Yuster, Raphael},
   title={$H$-factors in dense graphs},
   journal={J. Combin. Theory Ser. B},
   volume={66},
   date={1996},
   number={2},
   pages={269--282},
   issn={0095-8956},
   review={\MR{1376050}},
   doi={10.1006/jctb.1996.0020},
}

%
%

\bib{BFM2003}{article}{
   author={Bohman, Tom},
   author={Frieze, Alan},
   author={Martin, Ryan},
   title={How many random edges make a dense graph Hamiltonian?},
   journal={Random Structures Algorithms},
   volume={22},
   date={2003},
   number={1},
   pages={33--42},
   issn={1042-9832},
   review={\MR{1943857}},
}

\bib{BMPP2020}{article}{
   author={B\"{o}ttcher, Julia},
   author={Montgomery, Richard},
   author={Parczyk, Olaf},
   author={Person, Yury},
   title={Embedding spanning bounded degree graphs in randomly perturbed
   graphs},
   journal={Mathematika},
   volume={66},
   date={2020},
   number={2},
   pages={422--447},
   issn={0025-5793},
   review={\MR{4130332}},
   doi={10.1112/mtk.12005},
}

\bib{D1952}{article}{
   author={Dirac, G. A.},
   title={Some theorems on abstract graphs},
   journal={Proc. London Math. Soc. (3)},
   volume={2},
   date={1952},
   pages={69--81},
   issn={0024-6115},
   review={\MR{0047308}},
}

\bib{DRRS}{article}{
	author={Dudek, A.},
	author={Reiher, Chr.},
	author={Ruci\'nski, A.},
	author={Schacht, M.},
	title={Powers of Hamiltonian cycles in randomly augmented graphs},
        journal={Random Structures Algorithms},
        volume={56},
        number={1},
        date={2020},
        pages={122--141},
}

\bib{E1964}{article}{
   author={Erd\H{o}s, P.},
   title={On extremal problems of graphs and generalized graphs},
   journal={Israel J. Math.},
   volume={2},
   date={1964},
   pages={183--190},
   issn={0021-2172},
   review={\MR{0183654}},
   doi={10.1007/BF02759942},
}


\bib{ES1983}{article}{
   author={Erd\H{o}s, P.},
   author={Simonovits, M.},
   title={Supersaturated graphs and hypergraphs},
   journal={Combinatorica},
   volume={3},
   date={1983},
pages={181--192},
}

\bib{JLR}{book}{
   author={Janson, Svante},
   author={\L uczak, Tomasz},
   author={Ruci\'nski, Andrzej},
   title={Random graphs},
   series={Wiley-Interscience Series in Discrete Mathematics and
   Optimization},
   publisher={Wiley-Interscience, New York},
   date={2000},
   pages={xii+333},
   isbn={0-471-17541-2},
   review={\MR{1782847}},
}


\bib{KSS1998}{article}{
   author={Koml\'{o}s, J\'{a}nos},
   author={S\'{a}rk\"{o}zy, G\'{a}bor N.},
   author={Szemer\'{e}di, Endre},
   title={On the P\'{o}sa-Seymour conjecture},
   journal={J. Graph Theory},
   volume={29},
   date={1998},
   number={3},
   pages={167--176},
   issn={0364-9024},
   review={\MR{1647806}},
   doi={10.1002/(SICI)1097-0118(199811)29:3<167::AID-JGT4>3.0.CO;2-O},
}
\bib{NT}{article}{
author={Nenadov, Rajko},
   author={Truji\'c, Milos},
   title={Sprinkling a few random edges doubles the power},
   note={To appear in SIAM Journal on Discrete Mathematics}
   }

	
\bib{RRS2007}{article}{
   author={R\"{o}dl, Vojt\v{e}ch},
   author={Ruci\'{n}ski, Andrzej},
   author={Schacht, Mathias},
   title={Ramsey properties of random $k$-partite, $k$-uniform hypergraphs},
   journal={SIAM J. Discrete Math.},
   volume={21},
   date={2007},
   number={2},
   pages={442--460},
   issn={0895-4801},
   review={\MR{2318677}},
   doi={10.1137/060657492},
}

\bib{S1974}{article}{
author={Seymour, Paul D.},
title={Problem Section, Problem 3},
conference={
title={Combinatorics},
address={Proc. British Combinatorial Conf., Univ. Coll. Wales,
Aberystwyth},
date={1973},
},
book={
publisher={Cambridge Univ. Press, London},
},
date={1974},
pages={201--202. London Math. Soc. Lecture Note Ser., No. 13},
review={\MR{0345829}},
}

\bib{Sz1978}{article}{
author={Szemer\'edi, E.},
title={Regular partitions of graphs},
conference={
title={Probl\'emes combinatoires et th\'eorie des graphes},
address={Colloq.
Internat. CNRS, Univ. Orsay, Orsay},
date={1976},
},
book={
title={Colloq. Internat. CNRS}
publisher={CNRS, Paris},
volume={260},
},
date={1978},
pages={399--401},
}

\end{biblist}
\end{bibdiv}

\end{document}